\theoremstyle{plain}
\newtheorem{theorem}{Theorem}[section]
\newtheorem*{conjecture*}{Conjecture}
\newtheorem{prop}[theorem]{Proposition}
\newtheorem{lemma}[theorem]{Lemma}
\newtheorem{coro}[theorem]{Corollary}
\theoremstyle{definition}
\newtheorem{remark}[theorem]{Remark}
\newtheorem*{ack}{Acknowledgements}
\def\CC{{\mathbb{C}}}
\def\PP{{\mathbb{P}}}
\def\ZZ{{\mathbb{Z}}}
\def\cJ{{\mathcal{J}}}
\def\cO{{\mathcal{O}}}
\def\cE{{\mathcal{E}}}
\def\cA{{\mathcal{A}}}
\def\cB{{\mathcal{B}}}
\def\cD{{\mathcal{D}}}
\def\cF{{\mathcal{F}}}
\def\cG{{\mathcal{G}}}
\def\cK{{\mathcal{K}}}
\def\cL{{\mathcal{L}}}
\def\cP{{\mathcal{P}}}
\def\cM{{\mathcal{M}}}
\def\cR{{\mathcal{R}}}
\def\cS{{\mathcal{S}}}\def\cU{{\mathcal{U}}}
\def\cC{{\mathcal{C}}}\def\cQ{{\mathcal{Q}}}
\def\cF{{\mathcal{F}}}
\def\cW{{\mathcal{W}}}
\def\ra{{\rightarrow}}
\def\lra{{\longrightarrow}}
\def\fsl{\mathfrak{sl}}
\DeclareMathOperator{\codim}{codim}
\DeclareMathOperator{\Hom}{Hom}
\DeclareMathOperator{\cHom}{\mathcal{H}om}
\DeclareMathOperator{\rank}{rank}
\DeclareMathOperator{\Pic}{Pic}
\DeclareMathOperator{\Sing}{Sing}
\DeclareMathOperator{\GL}{GL}
\DeclareMathOperator{\SU}{SU}
\DeclareMathOperator{\Ext}{Ext}
\def\vlad#1{\textcolor{purple}{{\bf Vlad:} #1 {\bf }}}
\def\dan#1{\textcolor{Plum}{{\bf Dan:} #1 {\bf }}}
\def\lau#1{\textcolor{Apricot}{{\bf Lau:} #1 {\bf }}}
\keywords{Moduli spaces of stable bundles, Coble hypersurfaces, degeneracy loci, Hecke lines, self-dual hypersurfaces, subvarieties of Grassmannians}
\subjclass[2020]{14H60; 22E46}
\title{Hecke cycles on moduli of vector bundles and orbital degeneracy loci}
\author{V. Benedetti, M. Bolognesi, D. Faenzi, L. Manivel}
\begin{document}

\begin{abstract}
Given a smooth genus two curve $C$, the moduli space $\SU_C(3)$ of rank three semi-stable vector bundles 
on $C$ with trivial determinant is a double cover  in $\PP^8$ branched over a sextic hypersurface, 
whose projective dual is the famous Coble cubic,  the unique cubic hypersurface that is singular along 
the Jacobian of $C$. In this paper we continue our exploration of the connections of such moduli 
spaces with the representation theory of $GL_9$, initiated in \cite{GSW} and pursued in \cite{GS, sam-rains1, sam-rains2, bmt}. Starting from a general trivector $v$ in $\wedge^3\CC^9$, we construct a Fano manifold $D_{Z_{10}}(v)$ 
in $G(3,9)$ as a so-called orbital degeneracy locus, and we prove that it defines a family of Hecke 
lines in $\SU_C(3)$. We deduce that $D_{Z_{10}}(v)$ is isomorphic to the odd moduli space $\SU_C(3, \cO_C(c))$ of 
rank three stable vector bundles on $C$ with fixed effective determinant of degree one. We deduce that 
the intersection of $D_{Z_{10}}(v)$ with a general translate of $G(3,7)$ in $G(3,9)$ is a K3 surface of genus $19$.
\end{abstract}

\maketitle 

\section{Introduction}

The Coble hypersurfaces are among the most fascinating objects in algebraic geometry, 
being connected with an unusual amount of different topics such as abelian varieties and 
their moduli spaces,  moduli spaces of points in $\PP^2$, theta-groups and 
theta-representations \`a la Vinberg, or the arithmetic invariant theory of 
Bhargava-Gross. 
Long after their discovery in the first quarter of 20th century, they were realized to 
be directly connected to certain moduli spaces of vector bundles on curves. Indeed, a Coble
quartic in $\PP^7$ is isomorphic to the moduli space $\SU_C(2)$ of rank two semistable bundles 
with trivial determinant on a genus three curve $C$. For a curve $C$ of genus two, the moduli 
space $\SU_C(3)$ of rank three semistable bundles with trivial determinant is a double 
cover of $\PP^8$, branched over a sextic hypersurface whose projective dual 
is a Coble cubic 
\cite{ortega-coble, nguyen-coble, BB12}.  The geometry of the Coble hypersurfaces reflects in many ways the 
geometry of these curves.

More generally, the moduli spaces $\SU_C(r,L)$ of semi-stable vector bundles of rank $r$ on $C$ 
with fixed determinant $L$ have been thoroughly investigated, from many different perspectives, 
including theta maps, conformal blocks and the Verlinde formula. 
Up to isomorphism, they do not depend on $L$ 
but only on its degree $d$ modulo $r$, and up to sign. Moreover they are smooth when $d$ and 
$r$ are coprime. So the moduli spaces that are connected to the Coble hypersurfaces have each 
one smooth companion, and one can wonder how these Fano manifolds fit into the picture. 

For genus three, we provided an answer to this question in \cite{coblequadric}, where we 
proved that the moduli space $\SU_C(2,L)$ of stable vector bundles with fixed determinant  $L$ of degree one, can be embedded in the 
Grassmannian $G(2,8)$ as the singular locus of a special quadratic section of the Grassmannian. 
This quadratic section is uniquely determined by this property, exactly as the Coble quartic
is uniquely determined by the fact that it is singular along the Kummer threefold of $C$; 
for this reason we coined it the {\it Coble quadric}. Our main purpose in this paper is to
obtain a similar description of the odd moduli space in genus two. 

As in \cite{coblequadric}, our study will be based on the beautiful relationships with 
certain theta-representations, in the sense of Vinberg. To be more specific, 
it was observed 
in \cite{GSW} that a Coble quartic can easily be constructed from a general element in 
$\wedge^4V_8$, if we denote by $V_8$ a complex vector space of dimension eight. Similarly,
a Coble cubic can  be constructed from a general element in $\wedge^3V_9$, if $V_9$
is now a complex vector space of dimension nine. In \cite{bmt} we used the approach of 
{\it orbital degeneracy loci}, whose general theory was developped in \cite{bfmt, bfmt2}, 
to explore this connection further and construct the Coble sextic and its singular locus, 
and even the generalized Kummer fourfold associated to the Jacobian of the curve, which is 
a famous example of hyperK\"ahler manifold. Here we further apply these ideas to linear 
spaces in $\SU_C(3)$. 

Lines in the moduli spaces $\SU_C(r,L)$ have been investigated by many authors, where by 
lines we mean rational curves that have degree one with respect to the ample generator 
of the Picard group. In $\SU_C(3)$ there are three families of lines passing through the 
general point: one family of Hecke lines, and two families of lines contained in the 
$\PP^4$'s of the two natural rulings of the moduli space. Starting from a general element 
$v$ in $\wedge^3V_9$, we will be able to describe these two rulings in terms of orbital
degeneracy loci (Proposition \ref{rulings-SingS}). In order to describe the Hecke lines,
we will first construct an orbital degeneracy locus in $G(3,V_9)$, that we denote 
by $D_{Z_{10}}(v)$, and we will prove that each point of $D_{Z_{10}}(v)$ defines two
Hecke planes in $\SU_C(3)$ (where we define a Hecke plane as a plane whose lines are 
all Hecke lines). It is probably known to the experts that although the family of Hecke
lines is irreducible, the family of Hecke planes has two different components, and this 
is one of the key properties that will allow us to prove our main result (Theorem \ref{thm_main2}):

\medskip\noindent {\bf Theorem.}
{\it For $v$ general in $\wedge^3V_9$, the Fano eightfold $D_{Z_{10}}(v)\subset G(3,V_9)$ 
is isomorphic to 
the odd moduli space $\SU_C(3,\cO_C(c))$, $c\in C$. }

\medskip
An important advantage of this approach by orbital degeneracy loci is that it provides
lots of informations on the embedding of  $\SU_C(3,\cO_C(c))$ into the Grassmannian. For 
example we get a free resolution of its ideal sheaf, from which we can confirm an 
expectation recently expressed by Mukai and Kanemitsu \cite{km}, that the intersection of the 
moduli space with a general translate of $G(3,7)$ in $G(3,9)$ is a K3 surface of genus $19$
(Proposition \ref{k3}). In fact, there is a natural embedding of 
$\SU_C(3,\cO_C(p))$ in $G(3,V_9)$ for any choice of a point $p$ on $C$, and playing with 
these one expects to be able to describe a locally complete family of K3 surfaces of genus $19$. Such locally complete families have been constructed so far for most values of the genus up to $20$, but genus $19$ would be new.

\smallskip
The plan of the paper is the following. In section 2, we recall some useful facts about the 
lines and planes inside the moduli space $\SU_C(3)$, for $C$ a curve of genus $2$. In section 
3 we recall the connections with representation theory and use them to describe explicitely 
the rulings of  $\SU_C(3)$ by $\PP^4$'s, starting from a general element $v$ of $\wedge^3V_9$.
In section 4 we apply the same approach to Hecke lines and planes. We define the orbital 
degeneracy locus $D_{Z_{10}}(v)$, show that it parametrizes pairs of Hecke planes and deduce 
our main Theorem. Finally we describe the minimal resolution of its ideal sheaf and the 
application to K3 surfaces of genus $19$.

\begin{ack}
All authors partially supported by FanoHK ANR-20-CE40-0023.
D.F. and V.B. partially supported by SupToPhAG/EIPHI ANR-17-EURE-0002, Région Bourgogne-Franche-Comté, Feder Bourgogne and Bridges ANR-21-CE40-0017.

We warmly thank Christian Pauly for useful discussions, and Shigeru Mukai and Akihiro Kanemitsu for sharing the results of \cite{km}. 
\end{ack}

\section{Moduli space of rank $3$ vector bundles on genus $2$ curves}

\subsection{Moduli spaces}
We will use the same notations as in \cite{coblequadric}: we denote by 
$\mathrm{U}_C(r,d)$ the moduli
space of semi-stable vector bundles on a curve $C$ of rank $r$ and determinant of degree $d$. If $L$ is a degree $d$ line bundle on $C$, we will denote by $\SU_C(r,L)$ the subvariety of $U_C(r,d)$ parametrizing vector bundles of fixed determinant $L$; moreover $\SU_C(r):=\SU_C(r,\cO_C)$. Since all the moduli spaces $\SU(r,L)$ are (non canonically) isomorphic when the degree of $L$ is fixed, we will also denote their isomorphism class by $\SU_C(r,d)$; it does depend on $d$ only modulo $r$. %Finally,  we will denote by $\mathrm{U}_C(r,d)^{\mathrm{eff}}$ the moduli space of vector bundles with effective determinant. When $d=1$, this  moduli space fibers over the curve $C$ with fiber over $c$ isomorphic to $\SU_C(2,\cO_C(c))$.

\subsection{Theta maps} 
In this paper we focus on the case where $C$ has genus two and the rank $r=3$. 
The moduli space $\SU_C(3)$ has Picard rank one, and the positive generator of the Picard group defines the Theta map 
$$\theta: \SU_3(C)\longrightarrow |3\Theta|=\PP(V_9)$$
to the linear system $|3\Theta|$ on $C$. This map is a double cover branched over 
a sextic hypersurface $\cS$, whose projective dual is the Coble cubic, characterized as
the unique cubic hypersurface in $  \PP(V_9^\vee)$ that is singular along the Jacobian 
of $C$ \cite{ortega-coble, nguyen-coble, beauville-coble}.

The sextic $\cS$ is also singular, in fact its singular locus identifies with that 
of $\SU_C(3)$, which has dimension five and can be described set theoretically as 
the locus of properly semi-stable vector bundles, equivalent to a direct sum 
$E=F\oplus \det(F)^\vee$ for some rank two vector bundle $F$ of degree zero 
on $C$. As a deeper stratum, we 
get the fourfold $\cK$ parametrizing vector bundles 
$E=L_1\oplus L_2\oplus L_3$, for three degree zero line bundles $L_1, L_2, L_3$ with trivial 
tensor product. When two  among the three bundles $L_1,L_2,L_3$ coincide, we get a surface that is a singular model of the Jacobian of $C$. Its singular locus, where the three bundles $L_1,L_2,L_3$ all coincide, is in bijection with the $81$ three-torsion points of the Jacobian.

\subsection{Lines in the moduli space} 
Rational curves in the moduli spaces $\SU_C(r,d)$ were extensively studied, see e.g. 
\cite{nr75, NarRamHecke,  sun, mok-sun, pal, mustopa-teixidor}.
Restricting to $g=2$,  $r=3$ and $d=0$, the results of 
\cite{mustopa-teixidor} show that there exist three different families of covering 
lines, i.e. families of rational curves of degree one with respect to the Theta embedding
and passing through a general point of the moduli space. We will 
denote these three families, as subvarieties of the Hilbert scheme,  
by $\cF_H$, $\cF_{R_1}$,  $\cF_{R_2}$, and we will denote by $\overline{\cF}_H$, 
$\overline{\cF}_{R_1}$,  $\overline{\cF}_{R_2}$ their bijective images inside the
Grassmannian $G(2,V_9)$. They all have dimension eleven, which is the expected dimension.

%but behave rather differently; we will illustrate this by showing how different  are the corresponding VMRT's (variety of minimal rational tangents), which in our case,
%since we deal with lines, are just the spaces of lines through a fixed general point. 

\subsubsection{Hecke lines}
The family $\cF_H$ of Hecke lines is most naturally defined as parametrizing lines 
in planes that are constructed as follows. Start with a point $p\in C$ and a 
stable rank three vector bundle $E$ on $C$ with $\det(E)=\cO_C(p)$. Any non zero
linear form on the fiber $E_p$ defines a rank three vector bundle $F$ on $C$ as the 
kernel of the composition $E\ra E_p\ra \cO_p$. 
Since  $E$
is stable of degree one, by \cite[Remark 5.2 (v)]{NarRamHecke} it is 
$(0,1)$-semistable, and then by \cite[Lemma 5.5]{NarRamHecke} the vector bundle 
$F$ is automatically semistable.

We thus get a linear embedding of $\PP(E_p^\vee)$ inside $\SU_C(3)$, whose image we will 
denote by $\Pi(E,p)$ (this is a  {\it good Hecke cycle} in the terminology of 
\cite{NarRamHecke}; the fact that the embedding is linear is e.g. in \cite{mustopa-teixidor}). 
By the first paragraph of the proof of \cite[Theorem 5.13]{NarRamHecke}, the resulting 
morphism 
$$\phi_p : \SU_C(3,\cO_C(p)) \lra G(3,V_9)$$
is injective.

%Moreover, tensoring the defining exact sequence of $F$ by $\cO_p$ we get an exact sequence of vector spaces 
%$$0\lra \CC\stackrel{\psi}{\lra} F_p\lra E_p\stackrel{\phi}{\lra}\CC\lra 0,$$
%exchanging the linear form on $E_p$ with a vector in $F_p$. 
%We get this way a plane 
%in $\SU_C(3)$ (the fact that the degree is one with respect to the Theta map is \cite{}), hence a morphism 
%$$\phi_p : \SU_C(3,\cO_C(p)) \lra G(3,V_9).$$
%We claim that this morphism is injective. Indeed suppose that $E_1$ and $E_2$ 
%define the same
%projective plane. By construction this yields an identification between the 
%projectivized fibers $\PP(E_1)_p$ and $\PP(E_2)_p$. If $\phi_1$ and $\phi_2$ define the same bundle $F$, the corresponding exact sequences dualize into 
%$$0\lra E_1^\vee\lra F^\vee\stackrel{\psi_1}{\lra} \cO_p\lra 0, \qquad  
%0\lra E_2^\vee\lra F^\vee\stackrel{\psi_2}{\lra}\cO_p\lra  0.$$
%The identification between $\PP(E_1)_p$ and $\PP(E_2)_p$ must be such that $\psi_1=\psi_2$,
%up to scalar, so the corresponding kernels $E_1$ and $E_2$ have to coincide. 
%\lau{pas encore convaincant}

Note also that the 
point $p$ can be uniquely recovered from any good Hecke plane. Indeed, the image of $\PP(E_p^\vee)$ in $\SU_C(3)$ is defined by the rank three bundle $M$ over $C\times 
\PP(E_p^\vee)$ fitting into the exact sequence 
$$0\lra M\lra p_1^*E\lra p_1^*\cO_p\otimes p_2^*\cO_{\PP(E_p^\vee)}(1)\lra 0$$
\cite[page 362]{NarRamHecke}.
For a point $q\ne p$ of $C$, the restriction $M(q)$ of $M$ to 
$\{q\}\times \PP(E_p^\vee)$ 
is a trivial bundle on $\PP^2$, while $M(p)$ is the direct sum of the hyperplane bundle and 
the dual tautological bundle, in particular it is non trivial. 
This allows to distinguish $p$ among the points of $C$. 

\smallskip 
The lines in the planes $\Pi(E,p)$ are  the Hecke lines. They are 
specified by fixing a one-dimensional subpace $L$ of $E_p$, and considering the planes  $P\subset E_p$ that contain $L$; we will denote the 
corresponding Hecke line by $\delta(E,p,L)$. In fact the previous argument works 
over such a line:

\begin{lemma}\label{unique}
A Hecke line $\delta(E,p,L)$ uniquely determines the vector bundle $E$ and also
the point $p$ in general.
\end{lemma}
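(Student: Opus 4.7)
The plan is to mimic for Hecke lines the argument just sketched for Hecke planes, by restricting the universal sheaf to $C\times\delta$ (where $\delta:=\delta(E,p,L)$) and then reading off the point $p$ from a jump of splitting types on $\PP^1$. First I would restrict the universal sequence
$$0\lra M\lra p_1^*E\lra p_1^*\cO_p\otimes p_2^*\cO_{\PP(E_p^\vee)}(1)\lra 0$$
to $C\times\delta\subset C\times\PP(E_p^\vee)$. Since the divisors $\{p\}\times\PP(E_p^\vee)$ and $C\times\delta$ meet transversely in the codimension-two subscheme $\{p\}\times\delta$, the relevant $\Tor_1$ vanishes and the restriction stays short exact, yielding a rank-three locally free sheaf $M'$ on $C\times\delta$ fitting in the analogous sequence with $\cO_{\PP(E_p^\vee)}(1)$ replaced by $\cO_\delta(1)$.

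Next, for each $q\in C$ I would compute the fiber $M'(q):=M'|_{\{q\}\times\delta}$ on $\delta\cong\PP^1$. For $q\ne p$ the rightmost term restricts to $0$, so $M'(q)\cong E_q\otimes\cO_{\PP^1}$ is balanced of splitting type $(0,0,0)$. For $q=p$ a short Koszul/$\Tor$ computation produces the four-term exact sequence
$$0\lra \cO_{\PP^1}(1)\lra M'(p)\lra E_p\otimes\cO_{\PP^1}\lra \cO_{\PP^1}(1)\lra 0,$$
whose rightmost surjection is the restriction to $\delta$ of the tautological evaluation on $\PP(E_p^\vee)$. Because every linear form parametrized by $\delta$ vanishes on $L$, this surjection factors through the Euler surjection $(E_p/L)\otimes\cO_{\PP^1}\twoheadrightarrow \cO_{\PP^1}(1)$ on $\PP^1=\PP((E_p/L)^\vee)$, whose kernel is $\cO_{\PP^1}(-1)$. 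Splitting the two resulting short exact sequences, using $h^1(\cO_{\PP^1}(k))=0$ for $k\ge 0$, one obtains
$$M'(p)\;\cong\;\cO_{\PP^1}(-1)\oplus\cO_{\PP^1}\oplus\cO_{\PP^1}(1).$$

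The splitting type $(-1,0,1)$ is unbalanced whereas the generic one is $(0,0,0)$, and this distinction is preserved under any twist of $M'$ by a line bundle pulled back from $\delta$. Thus $p$ is recovered intrinsically from the Hecke line $\delta$ together with its family of bundles, as the unique point of $C$ at which the restriction of the universal bundle fails to be balanced. Once $p$ is determined, $E$ is recovered as the pushforward $p_{1*}M'$, which is locally free of rank three by Grauert's theorem and the constancy of $h^0(M'(q))=3$ obtained in the splitting-type computation; finally $L\subset E_p$ is recovered as the common kernel of the linear forms on $E_p$ parametrized by $\delta$.

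The main obstacle is the $\Tor$ computation at $q=p$ together with the verification that all intermediate extensions split on $\PP^1$; this is the one-dimensional analogue of the calculation sketched above on $\PP^2$ for the Hecke plane case. The clause ``in general'' in the statement reflects the existence of a proper closed sublocus where $\delta$ admits more than one Hecke presentation, e.g.\ where $\delta$ is simultaneously contained in two distinct Hecke planes $\Pi(E,p)$ and $\Pi(E',p')$ with $p\ne p'$: on such exceptional lines the universal family on $\delta$ is not canonical and the splitting-type argument need not single out a unique $p$.
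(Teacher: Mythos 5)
Your approach is exactly the paper's: the authors' entire proof of this lemma is the sentence preceding it, namely that the splitting-type argument given for Hecke planes ``works over such a line'', and you carry that out correctly. The restriction of the universal sequence to $C\times\delta$ stays exact, the fibers $M'(q)$ are trivial for $q\ne p$, and your $\Tor$/Koszul computation at $q=p$ correctly yields $M'(p)\cong\cO_{\PP^1}(1)\oplus\cO_{\PP^1}\oplus\cO_{\PP^1}(-1)$, so $p$ is detected as the unique jumping point, invariantly under twists of the family by line bundles from $\delta$. This is the content the paper actually relies on.

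There is, however, one step that fails as written: the recovery of $E$ as $p_{1*}M'$. Pushing the restricted universal sequence forward along $p_1$ gives
$0\lra p_{1*}M'\lra E\lra \cO_p\otimes H^0(\delta,\cO_\delta(1))\lra\cdots$, and the map $E\to\cO_p\otimes H^0(\cO_\delta(1))\cong\cO_p\otimes (E_p/L)$ is the evaluation at $p$ followed by the projection mod $L$; it is surjective. Hence $p_{1*}M'$ is the kernel of $E\to\cO_p\otimes E_p/L$, i.e.\ precisely the degree $-1$ bundle $G_L^\vee$ that the paper introduces a few lines later --- not $E$ (their determinants are $\cO_C(-p)$ and $\cO_C(p)$ respectively). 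Your Grauert argument correctly shows $p_{1*}M'$ is locally free of rank three, but that does not identify it with $E$, and knowing $G_L^\vee$ alone does not determine $E$ among its Hecke modifications at $p$. The fix is immediate: dualize first. Since $\mathcal{E}xt^1(p_1^*\cO_p\otimes p_2^*\cO_\delta(1),\cO)\cong p_1^*\cO_p\otimes p_2^*\cO_\delta(-1)$ has no sections along the fibers of $p_1$, one gets $p_{1*}(M'^\vee)=E^\vee$, so $E=(p_{1*}(M'^\vee))^\vee$ --- after normalizing the twist of the family so that the generic splitting type is $(0,0,0)$, which your jump computation already provides. (Alternatively, once $p$ is known one may invoke the injectivity of $\phi_p$ from the first paragraph of the proof of \cite[Theorem 5.13]{NarRamHecke}, as the paper does for planes.) You should also say explicitly that any two families on $C\times\delta$ inducing the same line differ by a twist from $\delta$ (Ramanan's lemma, valid when $\delta$ avoids the strictly semistable locus); this is where the ``in general'' genuinely enters, rather than in the possibility of two presentations with distinct $p$.
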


As a result, the projectivization $\hat{\cF}_H$ of the dual universal bundle on
what is denoted 
$U_C(3,C)$ in \cite{NarRamHecke} (the moduli space of stable bundles with effective 
determinant of degree one) maps birationally to $\cF_H$, 
and fibers over the curve $C$.

\subsubsection{Lines in the rulings}
Another way to construct lines in $\SU_C(3)$ is to use extensions between bundles of
smaller ranks. Consider stable vector bundles $E_1, E_2$ with $\rank(E_1)=r_1$ and $\rank(E_2)=r_2=3-r_1$, $r_1$ being either $1$ or $2$. Suppose that $E_1$ has degree $-1$ 
and $E_2$ degree $1$, with $\det(E_1)=\det(E_2)^{-1}$. So if $r_1=1$, the line bundle 
$E_1$ is determined by $E_2$, and vice versa: if $r_1=2$, the line bundle $E_2$ is 
determined by $E_1$. The space of extensions 
between $E_1$ and $E_2$ is five dimensional, and we get  a linear embedding of $\PP(\Ext^1(E_2,E_1))\simeq\PP^4$ inside $\SU_C(3)$. Hence we obtain two rulings $R_1$ and $R_2$ in
four-dimensional projective spaces. The families $\cF_{R_1}$ and  $\cF_{R_2}$ consist
in the lines contained in these two rulings.

\begin{lemma}\label{unique-ruling}
A line in $\cF_{R_1}$ (or $\cF_{R_2}$) that is  contained in the stable locus of 
the moduli space is contained in a unique linear
space of the ruling.
\end{lemma}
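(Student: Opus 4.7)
The plan is to argue by contradiction: suppose $\ell$ is contained in two distinct linear spaces $\Pi = \PP(\Ext^1(E_2, E_1))$ and $\Pi' = \PP(\Ext^1(E_2', E_1'))$ of the ruling $R_1$, and show that the set-theoretic intersection $\Pi \cap \Pi'$ inside $\SU_C(3)$ is zero-dimensional, contradicting $\ell \subset \Pi \cap \Pi'$. I focus on $\cF_{R_1}$ with $r_1 = 1$; the statement for $\cF_{R_2}$ then follows by the duality $E \mapsto E^\vee$ which exchanges the two rulings.

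Every $E \in \ell$ contains sub-line bundles $L \cong E_1$ and $L' \cong E_1'$ of degree $-1$, with $E/L \cong E_2$ and $E/L' \cong E_2'$. Suppose first that $E_1 \not\cong E_1'$. A short argument using stability of $E$ shows that $L \cap L' = 0$ inside $E$: otherwise $L + L'$ would be a rank-two subsheaf of $E$ of non-negative degree, violating stability. Hence $L \oplus L' \hookrightarrow E$, with quotient the degree-two line bundle $Q := (E_1 \otimes E_1')^{-1}$. So $E$ is classified by an extension class
\[
(\alpha, \beta) \in \Ext^1(Q, E_1) \oplus \Ext^1(Q, E_1') = \Ext^1(Q, E_1 \oplus E_1'),
\]
and the prescribed quotients force $\beta$ proportional to a fixed nonzero $\beta_0 \in \Ext^1(Q, E_1')$ (representing $E_2$) and $\alpha$ proportional to a fixed nonzero $\alpha_0 \in \Ext^1(Q, E_1)$ (representing $E_2'$). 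The class of $E$ thus lies on the pencil $\{(\lambda \alpha_0, \mu \beta_0) : (\lambda : \mu) \in \PP^1\}$.

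Next I analyze the orbits of this pencil modulo isomorphism of extensions. The automorphism group $\mathrm{Aut}(E_1) \times \mathrm{Aut}(E_1') \times \mathrm{Aut}(Q) = (\CC^*)^3$ acts on the extension space by $(a, b, c) \cdot (\alpha, \beta) = (a c^{-1} \alpha, b c^{-1} \beta)$, yielding a residual $(\CC^*)^2$-action after passing to the projectivization. This action collapses the pencil to three orbits: the generic orbit with $\lambda \mu \neq 0$, giving a single isomorphism class of bundle; and the two boundary orbits. At $\lambda = 0$ the first summand splits off, so $E \cong E_1 \oplus E_2$; at $\mu = 0$ one has $E \cong E_1' \oplus E_2'$. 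Since $E_2$ and $E_2'$ are stable rank-two bundles of slope $1/2 > 0 = \mu(E)$, both direct sums are destabilized and do not represent points of $\SU_C(3)$. Hence $\Pi \cap \Pi'$ contains at most a single point of $\SU_C(3)$, contradicting $\ell \subset \Pi \cap \Pi'$.

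The case $E_1 \cong E_1'$ with $E_2 \not\cong E_2'$ is treated analogously: each $E$ on $\ell$ must then admit two linearly independent morphisms $E_1 \to E$ (yielding distinct sub-line bundles with different quotients), so $E_1^{\oplus 2} \hookrightarrow E$ with line bundle quotient $M$ of degree two. The same reasoning applied to $\Ext^1(M, E_1^{\oplus 2})$ under the $\GL_2 \times \mathrm{Aut}(M)$-action again reduces the locus of compatible $E$ to one generic orbit plus two boundary orbits $E_1 \oplus E_2$ and $E_1 \oplus E_2'$, which are destabilized for the same reason. The main obstacle is this automorphism analysis: one must verify that the residual $(\CC^*)^2$- or $\GL_2$-action really collapses the $\PP^1$-pencil of admissible extensions to a single stable orbit plus two unstable boundary orbits, exploiting the fact that at each boundary a stable rank-two bundle of positive slope appears as a direct summand of $E$.
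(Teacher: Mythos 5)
Your strategy is genuinely different from the paper's: instead of working with the two universal extension families over the line $\delta$ and identifying them via Ramanan's lemma (which is how the paper produces a nonzero morphism $E_1\to E_1'$ and hence an isomorphism), you try to bound the set of stable bundles lying in $\Pi\cap\Pi'$ pointwise. The outline is plausible, but there is a genuine gap at the step where you confine the extension class to the pencil $\{(\lambda\alpha_0,\mu\beta_0)\}$. The condition ``$E/L\cong E_2$'' only says that the extension of $Q$ by $E_1'$ with class $\beta$ has middle term abstractly isomorphic to $E_2$; it does \emph{not} force $\beta$ to be proportional to a fixed $\beta_0$. The locus of admissible $\beta$ is governed by the set of saturated embeddings $E_1'\hookrightarrow E_2$ with quotient $Q$, i.e.\ essentially by $\PP\Hom(E_1',E_2)$, and $\hom(E_1',E_2)=h^0(E_1'^\vee\otimes E_2)$ is only bounded below by $\chi=1$; it can be $2$ (or more) for special $E_1',E_2$, in which case the admissible classes sweep out a cone of dimension $\ge 2$ and your count ``one stable orbit plus two unstable boundary orbits'' no longer applies --- you would be left with a possibly positive-dimensional family of pairwise non-isomorphic $E$'s in $\Pi\cap\Pi'$, which is exactly what you must exclude. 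Since the lemma is stated for \emph{every} line in the stable locus, you cannot wave this away by genericity, and you do not verify it even generically. (You flag ``the automorphism analysis'' as the main obstacle, but that part is actually fine; the missing step is getting down to a single pencil in the first place.) The same issue recurs in your second case with the $\GL_2$-action on $\Ext^1(M,E_1^{\oplus 2})$.

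There are also two smaller inaccuracies worth fixing. First, the justification of $L\cap L'=0$ is garbled: if $L\cap L'\ne 0$ then $L+L'$ has rank \emph{one}, not two; the correct argument is that its saturation is a line subbundle of degree $\ge 0$ (strictly containing the degree $-1$ subsheaf $L$), contradicting stability --- or simply that two distinct saturated line subbundles always intersect in $0$. Second, you assume $L\oplus L'$ is saturated in $E$ so that $Q$ is a line bundle; stability only gives that the saturation has degree $-2$ or $-1$, so the quotient may a priori have torsion of length one, and that case needs to be handled or excluded. By contrast, the paper's family-theoretic proof avoids classifying $\Pi\cap\Pi'$ altogether: it uses that the bundles along $\delta$ are stable to identify the two relative extensions $\cE\cong\cE'$ up to a twist, and then a degree argument on $C\times\delta$ forces the composite $p_1^*E_1\otimes p_2^*\cO(1)\to\cE'\to p_1^*E_2'$ to vanish, yielding $E_1\cong E_1'$ directly.
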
 

\proof Consider a line $\delta$ in $\SU_C(3)$ which is contained in two different members
of the ruling $\cR_1$, corresponding to pairs of vector bundles $(E_1,E_2)$ and $(E'_1,E'_2)$, and two planes $A\subset \Ext^1(E_2,E_1)$, $A'\subset \Ext^1(E'_2,E'_1)$. Each point of $\PP(A)$ and $\PP(A')$ can be identified to a point  of $\delta=\PP(B)$ , and
we can choose isomorphisms $A\simeq B\simeq A'$ compatible with these identifications. 
Hence an element $e$ of  $\Hom(B,\Ext^1(E_2,E_1))\simeq \Ext^1(p_1^*E_2, p_1^*E_1\otimes p_2^*\cO_{\PP(B)}(1))$, where the latter Ext group is taken between vector bundles on 
$C\times \PP(B)=C\times\delta$, with $p_1$ and $p_2$ the two projections. This defines 
on this surface an extension (see \cite[Proposition 2.5]{mustopa-teixidor})
$$0\lra p_1^*E_1\otimes p_2^*\cO_{\PP(B)}(1)\lra \cE\lra p_1^*E_2\lra 0.$$
Similarly, we get an element $e'$ of  $\Hom(B,\Ext^1(E'_2,E'_1))\simeq \Ext^1(p_1^*E'_2, p_1^*E'_1\otimes p_2^*\cO_{\PP(B)}(1))$, and an extension
$$0\lra p_1^*E'_1\otimes p_2^*\cO_{\PP(B)}(1)\lra \cE'\lra p_1^*E'_2\lra 0.$$
We claim that  $\cE\simeq\cE'$. Since $\delta$ is supposed not to meet the 
locus of properly semistable bundles (the singular locus of the moduli space), the isomorphism class of $\cE_{C\times \{t\}}$ is uniquely determined at every point $t\in\delta$, and has to coincide with that of  $\cE'_{C\times \{t\}}$. Since this is true for any $t\in\delta$, we deduce from  \cite[Lemma 2.5]{ramanan}
that $\cE'=\cE\otimes p_2^*N$ for some line bundle $N$ 
on $\delta$; computing determinants we see that $N$ is in fact trivial. 

But then the composition 
$$p_1^*E_1\otimes p_2^*\cO_{\PP(B)}(1)\lra\cE\simeq\cE'\lra p_1^*E'_2$$
clearly has to vanish, hence there is an induced morphism from $p_1^*E_1\otimes p_2^*\cO_{\PP(B)}(1)$
to $p_1^*E'_1\otimes p_2^*\cO_{\PP(B)}(1)$, and then from $E_1$ to $E'_1$. Exchanging 
$e$ and $e'$ we deduce that in fact $E_1\simeq E'_1$ and  $E_2\simeq E'_2$. \qed

\medskip
In the construction of the rulings, the rank two bundle (or its dual) is 
parametrized by $U_C(2,1)$ and determines the rank one bundle. So the rulings are parametrized by $U_C(2,1)$. By the previous Lemma, the corresponding families of lines $\cF_{R_1}$ and  $\cF_{R_2}$ are the birational images of $\hat{\cF}_{R_1}$ and  $\hat{\cF}_{R_2}$,
which are $G(2,5)$-fibrations over $U_C(2,1)$. 
%The exceptional loci of the projections 
%$\hat{\cF}_{R_1}\lra \cF_{R_1}$ and $\hat{\cF}_{R_2}\lra \cF_{R_2}$ are contained in 
%the set of lines that meet the singular locus of the moduli space, and their codimension 
%is at least two\vlad{why?}\lau{c'est la question que je posais; ok seulement si l'intersection   avec un $P^4$ est au plus une courbe}. 
Notice that since $U_C(2,1)$ is a fiber bundle 
over the abelian surface $\Pic^{1}(C)$, this is also the case for $\hat{\cF}_{R_1}$ and  $\hat{\cF}_{R_2}$.

\subsubsection{Action of  the involution} \label{remark_two_rulings}
According to \cite{ortega-coble}, the covering involution $\tau$ of the double cover $\theta$ of $\SU_C(3)$  is given by $E\mapsto \tau(E):=\iota^* E^\vee$,  where 
$\iota$ is the hyperelliptic involution of $C$.

\begin{lemma}
The two rulings $R_1$ and $R_2$ are exchanged by $\tau$. 
\end{lemma}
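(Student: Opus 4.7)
The plan is to unwind the definitions and check that the involution $\tau(E)=\iota^*E^\vee$ sends an extension defining a member of one ruling to an extension defining a member of the other.

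First I would fix a $\PP^4$ in the ruling $R_1$, i.e.\ a pair $(E_1,E_2)$ with $E_1$ a line bundle of degree $-1$, $E_2$ a rank two stable bundle of degree $1$, and $\det(E_1)\otimes\det(E_2)=\cO_C$; points of this $\PP^4$ correspond to classes in $\PP(\Ext^1(E_2,E_1))$ and hence to extensions
$$0\lra E_1\lra E\lra E_2\lra 0.$$
Applying $\tau$ amounts to dualizing this sequence and then pulling it back by the hyperelliptic involution $\iota$, yielding
$$0\lra \iota^*E_2^\vee\lra \iota^*E^\vee\lra \iota^*E_1^\vee\lra 0.$$
Now $\iota^*E_2^\vee$ is a rank two bundle of degree $-1$ and $\iota^*E_1^\vee$ is a line bundle of degree $1$, and their determinants still multiply to $\cO_C$; moreover $\iota^*(\cdot)^\vee$ preserves stability. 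Hence this is an extension of exactly the type defining a member of the ruling $R_2$ attached to the pair $(\iota^*E_2^\vee,\iota^*E_1^\vee)$.

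Next I would observe that this construction is natural in the extension class. The map $\tau$ induces an isomorphism
$$\Ext^1(E_2,E_1)\;\xrightarrow{\;\sim\;}\;\Ext^1(\iota^*E_1^\vee,\iota^*E_2^\vee)$$
obtained by dualizing morphisms of complexes and applying $\iota^*$. Consequently the $\PP^4$ associated to $(E_1,E_2)$ is sent by $\tau$ bijectively and linearly onto the $\PP^4$ associated to $(\iota^*E_2^\vee,\iota^*E_1^\vee)$ in the ruling $R_2$. A line in the first $\PP^4$ is therefore sent to a line in the second, i.e.\ $\tau(\cF_{R_1})\subseteq\cF_{R_2}$; since $\tau$ is an involution, equality holds and $\tau$ exchanges the two families.

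There is essentially no obstacle beyond bookkeeping: once one sees that dualization turns a rank one subbundle into a rank one quotient (and pulling back by $\iota$ does not spoil stability or determinants), the two rulings must be swapped. The only point worth flagging is to make sure the resulting extension is not split, which follows from the fact that $\tau$ is an isomorphism of the moduli space and thus does not collapse a nontrivial $\PP^4$ of extensions to a point.
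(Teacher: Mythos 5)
Your proof is correct and follows essentially the same route as the paper's: dualize the defining extension, pull back by the hyperelliptic involution, and observe that the ranks and degrees of sub and quotient are swapped so that a member of one ruling lands in the other. The extra remarks on naturality in the extension class and non-splitness are fine but not needed for the statement; the paper's own proof stops at the rank/degree bookkeeping.
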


\begin{proof} 
If the bundle $E$ fits into an exact sequence $$0\to E_1 \to E \to E_2 \to 0,$$
with $\rank(E_1)=2$, $\deg(E_1)=-1$, $\rank(E_2)=1$, $\deg(E_2)=1$, then $\iota^* E^\vee$ fits into the exact sequence
$$0\to \iota^* E_2^\vee \to \iota^* E^\vee \to \iota^* E_1^\vee \to 0 ,$$
with $\rank(\iota^* E_2^\vee)=1$, $\deg(\iota^* E_2^\vee)=-1$, $\rank(\iota^* E_1^\vee)=2$, $\deg(\iota^* E_1^\vee)=1$. This implies the claim.
\end{proof}

The upshot is that the images 
of $\cF_{R_1}$ and $\cF_{R_2}$ inside $G(2,V_9)$ form a single family   $\overline{\cF}_{R}:=\overline{\cF}_{R_1}=\overline{\cF}_{R_2}$, and their fibrations
over $\Pic^1(C)$ descend to $\overline{\cF}_{R}$.

\begin{lemma}\label{linespres}
The family $\cF_H$ of Hecke lines  is preserved by $\tau$. 
\end{lemma}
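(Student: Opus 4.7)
The plan is to exhibit $\tau(\delta(E,p,L))$ explicitly as a Hecke line $\delta(\tilde E, \tilde p, \tilde L)$ for suitably chosen data. The key auxiliary object is the common sub-bundle $F_L := \ker\bigl(E \twoheadrightarrow (E_p/L) \otimes \cO_p\bigr)$, a rank three bundle with $\det F_L = \cO_C(-p)$. For each $2$-plane $P \subset E_p$ containing $L$, the bundle $F_P$ on the Hecke line fits into a short exact sequence
$$0 \to F_L \to F_P \to (P/L) \otimes \cO_p \to 0,$$
so that the whole Hecke line $\delta(E,p,L)$ appears as a $\PP^1$ of extensions of $\cO_p$ by $F_L$.

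Dualising this sequence (using the canonical identification $\mathcal{E}xt^1(\cO_p, \cO_C) \simeq \cO_p \otimes \cO_C(p)|_p$) and pulling back by $\iota$ will yield
$$0 \to \tau(F_P) \to \iota^* F_L^\vee \to (P/L)^\vee \otimes \cO_{p'} \to 0, \qquad p' := \iota(p),$$
up to a $1$-dimensional twist at $p'$. Setting $\tilde E := \iota^* F_L^\vee$, one has $\det \tilde E = \cO_C(p')$, so $\tilde E \in \SU_C(3, \cO_C(p'))$ (granted its stability, discussed below), and the sequence realises $\tau(F_P)$ as a Hecke transform of $\tilde E$ at $p'$.

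It then remains to check that the linear forms defining these Hecke transforms share a common $1$-dimensional kernel. Dualising $0 \to F_L \to E \to (E_p/L) \otimes \cO_p \to 0$ and restricting to the fibre over $p$ produces a canonical surjection $\psi_p : F_L^\vee|_p \twoheadrightarrow (E_p/L)^\vee$ with $1$-dimensional kernel; the linear form on $\tilde E_{p'}$ attached to $\tau(F_P)$ is the pullback $\iota^*(\pi_P \circ \psi_p)$, where $\pi_P : (E_p/L)^\vee \twoheadrightarrow (P/L)^\vee$ is the canonical projection, so it automatically vanishes on $\tilde L := \iota^*(\ker \psi_p)$. The duality $P \leftrightarrow P^\perp \subset L^\perp = (E_p/L)^\vee$ gives a bijection of $\PP^1$'s showing $\tau(\delta(E,p,L)) = \delta(\tilde E, p', \tilde L)$, a Hecke line. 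Combined with $\tau^2 = \mathrm{id}$, this yields $\tau(\cF_H) = \cF_H$.

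The main technical obstacle is twofold: tracking the $\mathcal{E}xt^1$-sheaf identification together with the $1$-dimensional twist by $\cO_C(p)|_p$ under $\iota^*$, and verifying the stability of $F_L$. The latter can be handled for generic $(E,p,L)$ by arguing that a destabilising subsheaf would force $E$ to admit a sub-bundle of forbidden slope, which is excluded outside a proper subvariety of the parameter space, and then invoking the closedness of $\cF_H$ in the Hilbert scheme.
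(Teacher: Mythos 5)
Your construction is essentially the paper's own: your $F_L$ is what the paper calls $G_L^\vee$, and the core step --- dualizing $0\to F_L\to F_P\to (P/L)\otimes\cO_p\to 0$ and pulling back by $\iota$ to exhibit $\tau(F_P)$ as a Hecke modification of $\tilde E=\iota^*F_L^\vee$ at $\iota(p)$ --- is exactly the argument in the text. Your explicit check that the resulting linear forms all kill a common line $\tilde L=\iota^*(\ker\psi_p)$ is correct and makes explicit a point the paper leaves implicit, and the $\mathcal{E}xt^1$ twist is harmless as you say.

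There is, however, a genuine gap in your stability step. You claim a destabilizing subsheaf of $F_L$ would force $E$ to contain a sub-bundle of forbidden slope; this is false. Here $\mu(E)=1/3$ and $\mu(F_L)=-1/3$, so a subsheaf $G\subset F_L$ of rank $1$ or $2$ and degree $0$ has $\mu(G)=0>-1/3$ and destabilizes $F_L$, while $\mu(G)=0<1/3$ is perfectly compatible with the stability of $E$. So no contradiction with the hypotheses on $E$ arises, and the subsequent appeal to genericity plus closedness is not justified by the argument you give (one would have to actually bound the locus of $(E,p,L)$ for which $F_L$ acquires a degree-zero subsheaf, which you do not do). The paper closes this gap with the $(k,l)$-stability calculus of Narasimhan--Ramanan: $F_P$ is stable of degree $0$, hence $(0,1)$-stable, and by \cite[Lemma 5.5]{NarRamHecke} the elementary modification $G_L^\vee=F_L$ is then $(0,0)$-semistable, i.e.\ semistable; since $\gcd(3,-1)=1$ this is stability, for \emph{every} $(E,p,L)$ and not just generic ones. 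You should replace your slope argument by this one (or an equivalent quantitative bound).
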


\begin{proof}
This is clear from the previous Lemma since the lines are preserved by $\tau$.
But let us be more precise. Let $E$ be a stable rank three vector bundle such that $\det(E)=\cO_C(p)$ for a point $p\in C$. 
 Consider  a complete flag $L\subset P\subset E_p$. There is a commutative diagram 

\begin{equation*}\label{heckediagram}
\xymatrix@-2ex{
& 0 & & & \\ 
& \cO_p\otimes P/L\ar[u] & & 0 & \\
 0\ar[r]&F_P\ar[r]\ar[u] & E \ar[r] & \cO_p\otimes E_p/P\ar[r]\ar[u]& 0 \\
 0\ar[r]&G_L^\vee\ar[r]^{\gamma_L}\ar[u] & E \ar[r]\ar@{=}[u] & \cO_p\otimes E_p/L\ar[u]\ar[r]& 0 \\
& 0\ar[u]& & \cO_p\otimes P/L\ar[u]& \\
& & & 0\ar[u]& 
}
\end{equation*}

The bottom horizontal exact sequence defines a vector bundle $G_L^\vee$, whose dual $G_L$ 
has again rank three and determinant $\cO_C(p)$. Moreover $G_L$ is also stable. Indeed, $F_P$ is (semi)stable of degree $0$, hence  $(0,1)$-(semi)stable.
%(for the definition and basic properties of $(k,l)$-stability, see \cite[Section 5]{NarRamHecke}). 
By \cite[Lemma 5.5]{NarRamHecke} $G_L^\vee$ is then $(0,0)$-(semi)stable, hence 
(semi)stable as well as its dual.

The plane $\Pi(E,p)$ in $\SU_C(3)$ parametrized by $\PP(E_p^\vee)$ consists in the vector bundles $F_P$,
for $P\subset E_p$. We get a line in this plane, that is, a Hecke line, 
by imposing to the 
planes $P$ to contain a common line $L$. Then dualizing the first vertical exact sequence in the 
previous diagram, and pulling-back by the hyperelliptic involution, we get 
$$0\lra \iota^*F_P^\vee \lra \iota^*G_L \lra \cO_{\iota(p)} \lra 0,$$
confirming that the image of our Hecke line is again a Hecke line. 
\end{proof}

Nevertheless, observe that the bundle $\iota^*G_L$ that defines this Hecke line does really depend on $L$,
not only on $E$. This will be a direct consequence of the next 

\begin{lemma}
The map from $\PP(E_p)$ to $\SU_C(3,\cO_C(p))$, sending $L$ to $G_L$, is non constant.
\end{lemma}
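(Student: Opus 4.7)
The strategy is by contradiction. Suppose that $G_L \simeq G_{L'}$ for all $L, L' \in \PP(E_p)$ and call their common isomorphism class $[G]$. The plan is to package all the $G_L^\vee$ into a single family on $C \times \PP(E_p)$ and then derive a contradiction at the fiber over $p \in C$.

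First I would construct the universal family. Let $Q$ denote the tautological rank $2$ quotient on $\PP(E_p) \simeq \PP^2$, so that $0 \to \cO(-1) \to E_p \otimes \cO \to Q \to 0$. Define $\cM$ on $C \times \PP(E_p)$ by the short exact sequence
$$0 \lra \cM \lra p_1^* E \lra p_1^* \cO_p \otimes p_2^* Q \lra 0.$$
Restricting to $C \times \{L\}$ and using $Q|_L = E_p/L$, one sees immediately that $\cM|_{C \times \{L\}} \simeq G_L^\vee$.

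Next, the hypothesis that the moduli map $L \mapsto [G_L^\vee]$ is constant, combined with the simplicity of the stable bundle $G^\vee$, implies that the pushforward $p_{2*}\cHom(p_1^* G^\vee, \cM)$ is a line bundle $\cL$ on $\PP^2$ (by Grauert, the fibrewise $\Hom$-space has constant dimension $1$ by Schur), and the resulting evaluation map $p_1^* G^\vee \otimes p_2^* \cL \to \cM$ is an isomorphism because it is nonzero, hence an isomorphism, on every fiber. Comparing the two computations of $\det \cM$ -- namely $p_1^* \cO_C(-p)$ from the defining sequence and $p_1^* \cO_C(-p) \otimes p_2^* \cL^{\otimes 3}$ from the trivialization -- forces $\cL^{\otimes 3} = \cO_{\PP^2}$; since $\Pic(\PP^2) = \ZZ$ is torsion-free, $\cL = \cO$, and thus $\cM \simeq p_1^* G^\vee$.

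The contradiction arises by restricting the defining sequence to $\{p\} \times \PP^2$. A short Tor computation ($p_1^*\cO_p$ has the Koszul resolution $0 \to p_1^*\cO_C(-p) \to p_1^*\cO_C \to p_1^*\cO_p \to 0$, whose restriction to $\{p\} \times \PP^2$ has zero differential) shows that $\Tor_1(p_1^*\cO_p \otimes p_2^* Q, \cO_{\{p\} \times \PP^2}) = Q$, so one obtains the four-term exact sequence
$$0 \lra Q \lra \cM|_{\{p\} \times \PP^2} \lra E_p \otimes \cO_{\PP^2} \lra Q \lra 0,$$
and in particular $\cM|_{\{p\} \times \PP^2}$ admits a surjection onto the tautological subbundle $\cO_{\PP^2}(-1) \subset E_p \otimes \cO_{\PP^2}$. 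But from $\cM \simeq p_1^* G^\vee$ one has $\cM|_{\{p\} \times \PP^2} \simeq \cO_{\PP^2}^{\oplus 3}$, and $\Hom(\cO_{\PP^2}^{\oplus 3}, \cO_{\PP^2}(-1)) = H^0(\cO_{\PP^2}(-1))^{\oplus 3} = 0$, so no such surjection can exist. The main point requiring some care is the trivialization $\cM \simeq p_1^* G^\vee \otimes p_2^* \cL$ of the second paragraph; once this is in hand, the Tor computation at $p$ -- which forces $\cM|_{\{p\} \times \PP^2}$ to surject onto the non-trivial line bundle $\cO(-1)$ -- delivers the contradiction cleanly.
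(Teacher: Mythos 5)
Your proof is correct, but it takes a genuinely different route from the paper's. The paper deduces the statement from the stronger fact that the differential of $L\mapsto G_L$ is injective: it invokes the Narasimhan--Ramanan computation of that differential as a connecting homomorphism and reduces everything to showing $\dim\Hom(G_L^\vee,E)=1$, which is done by a determinant argument (a nonzero $\gamma\colon G_L^\vee\to E$ has $\det\gamma$ a section of $\cO_C(2p)$, which vanishes only at $p$, so a second independent homomorphism would produce pencil members dropping rank at arbitrary points). You instead argue by contradiction with a see-saw argument: constancy plus simplicity of the stable bundles $G_L^\vee$ trivializes the family $\cM$ over $C\times\PP(E_p)$ after the determinant computation kills the twist $\cL$, and then the $\Tor$ analysis at $\{p\}\times\PP^2$ exhibits a surjection $\cM|_{\{p\}\times\PP^2}\twoheadrightarrow\cO_{\PP^2}(-1)$, impossible for a trivial bundle. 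Your argument is essentially the relative version of the paper's own earlier observation that the restriction of the Hecke family to $\{p\}\times\PP(E_p^\vee)$ "jumps," so it fits the spirit of the section; it is self-contained (no appeal to the differential computations of Narasimhan--Ramanan) and works uniformly in $p$ (the paper's claim that $\cO_C(2p)$ has a unique section tacitly assumes $p$ is not a Weierstrass point), but it only yields non-constancy, whereas the paper's method gives immersivity of $L\mapsto G_L$, a stronger conclusion. All the individual steps you flag as delicate (Grauert/base change for the rank-one pushforward, fiberwise isomorphisms between stable bundles of equal slope, the four-term $\Tor$ sequence) check out.
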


\begin{proof}
The proof mimicks the proof of \cite[Lemma 5.9]{NarRamHecke}, where it is proved that the differential of the morphism $L\mapsto G_L$ is injective in a context 
that needs a little adaptation, as follows. The differential is computed in \cite[Lemma 5.10]{NarRamHecke} in a rather general setting. 
%When we apply this result in our situation, the only 
%difference with the proof of \cite[Lemma 5.9]{NarRamHecke} is that we need to check 
%that $\Hom(G_L^\vee,E)$ is one dimensional, which requires to modify slightly the 
%arguments of \cite[Lemma 5.6]{NarRamHecke}. \vlad{au risque d'écrire des choses fausses, laissez-moi ajouter des détails à l'argument pour voir si j'ai compris et pourt le rendre plus transparent.} \vlad{The differential is computed in \cite[Lemma 5.10]{NarRamHecke}. 

To deduce it is injective in our situation as well, 
let us onsider the exact sequence 
$$0\to G_L^\vee \to E \to \cO_p\otimes E_p/L\to 0.$$ 
By letting $L$ vary in $\PP(E_p)$, we get a family of extensions 
$$0\to \cG^\vee \to p_1^*\cE \to p_2^*\cQ \otimes \cO_{\{p\}\times \PP(E_p)}\to 0$$ 
on $C\times \PP(E_p)$, where $\cQ$ is the rank two tautological bundle on $\PP(E_p)$. By \cite[Lemma 5.10]{NarRamHecke},   the differential of $L\mapsto G_L$ is given by the connecting homomorphism 
$$\cHom((\cG^\vee|_{\{p\}\times \PP(E_p)},\cQ) \to R^1p_{2*} \mathcal{E}nd(\cG).
$$
But this fits in the exact sequence
$$ 0\to p_{2*} \mathcal{E}nd(\cG) \to p_{2*}\cHom(\cG^\vee, p_1^* \cE) \to \cHom((\cG^\vee|_{\{p\}\times \PP(E_p)},\cQ) \to R^1 p_{2*} \mathcal{E}nd(\cG)  .$$ Since $\cG^\vee$ is a family of stable bundles, $p_{2*} \mathcal{E}nd(\cG)=\cO_{\PP(E_p)}$. Similarly to what happens in the proof of \cite[Lemma 5.9]{NarRamHecke}, the result will follow if we can show that $\Hom(G_L^\vee,E)$ is one dimensional, which is essentially a slight modification of \cite[Lemma 5.6]{NarRamHecke}.

In order to check this, the main observation is the following. The morphism 
$\gamma_L  : G_L^\vee\ra E$ in the diagram above has full rank outside $p$. 
Suppose $\gamma  : G_L^\vee\ra E$ is another a morphism , generically of full rank. 
Then $\det(\gamma)$
defines a section of $\det(G_L)\otimes \det(E)=\cO_C(2p)$. But this line bundle has a unique global section, vanishing only (with multiplicity two) at $p$. So $\gamma$ also has to
be an isomorphism outside $p$. But if the dimension of $\Hom(G_L^\vee,E)$ was bigger
than one, we could construct morphisms $s\gamma_L+t\gamma$ whose rank would drop at any chosen point of $C$, and this would yield a contradiction. 
\end{proof}

\begin{prop}
The image of the plane $\Pi(E,p)$ by 
the involution $\tau$ is 
$$\tau (\Pi(E,p))=\Pi^*(\iota^*E,\iota(p)),$$ 
where $\Pi^*(F,q)$ is the plane that parametrizes the duals of the bundles 
parametrized by $\Pi(F,q)$. In particular, it is {\it not} a plane of the same type.
\end{prop}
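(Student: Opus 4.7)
The plan is to first identify $\tau(\Pi(E,p))$ with $\Pi^*(\iota^*E,\iota(p))$ by a direct manipulation of short exact sequences, and then rule out that this image can itself be a Hecke plane by combining the two preceding lemmas.

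For the identification, fix a bundle $F_P\in\Pi(E,p)$ attached to a codimension one subspace $P\subset E_p$ via the defining sequence $0\to F_P\to E\to \cO_p\otimes E_p/P\to 0$. Pulling this sequence back along $\iota$ yields
$$0\to \iota^*F_P\to \iota^*E\to \cO_{\iota(p)}\otimes E_p/P\to 0,$$
after identifying $(\iota^*E)_{\iota(p)}=E_p$. This exhibits $\iota^*F_P$ as a Hecke modification of $\iota^*E$ at $\iota(p)$, labelled by the same hyperplane $P$; in particular $\iota^*F_P\in\Pi(\iota^*E,\iota(p))$. Since pullback and dualization commute, $\tau(F_P)=(\iota^*F_P)^\vee$ lies in $\Pi^*(\iota^*E,\iota(p))$, and the resulting bijection $P\mapsto P$ on parameter spaces proves the first claim.

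For the second claim, assume for contradiction that $\Pi^*(\iota^*E,\iota(p))$ is itself a Hecke plane $\Pi(F,q)$ for some stable $F$ with $\det F=\cO_C(q)$. Pick a general line $L\subset E_p$ and consider the Hecke line $\delta(E,p,L)\subset\Pi(E,p)$. By Lemma \ref{linespres} its image $\tau(\delta(E,p,L))$ is again a Hecke line, which on the one hand must sit inside the assumed Hecke plane $\Pi(F,q)$, and on the other hand is realized in the proof of Lemma \ref{linespres} as a Hecke line inside $\Pi(\iota^*G_L,\iota(p))$, where $G_L$ is the companion Hecke modification. By Lemma \ref{unique} a general Hecke line determines its ambient Hecke plane uniquely, so we are forced to have $(F,q)=(\iota^*G_L,\iota(p))$. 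Since $q=\iota(p)$ is fixed while $L$ varies in $\PP(E_p)$, this would make $G_L$ independent of $L$, contradicting the previous lemma.

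The main subtlety is to guarantee that the genericity hypotheses invoked along the way are simultaneously compatible: one needs $L$ to range over a dense open subset of $\PP(E_p)$ on which both the Hecke line $\delta(E,p,L)$ is generic in the sense of Lemma \ref{unique} and the map $L\mapsto G_L$ is non-constant. Both conditions cut out Zariski open subsets, so their intersection is non-empty and the contradiction goes through, completing the proof.
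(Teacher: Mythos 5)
Your proof is correct and follows essentially the same route as the paper's: the identification $\tau(\Pi(E,p))=\Pi^*(\iota^*E,\iota(p))$ by commuting $\iota^*$ with the Hecke modification and with duality, and then the contradiction obtained by forcing $\iota^*G_L$ to be independent of $L$ via Lemma \ref{unique}, which is exactly what the paper's one-line proof invokes. One small caution on phrasing: Lemma \ref{unique} says a general Hecke line determines $(E,p)$, not that it lies on a unique Hecke plane (the later Corollary shows it lies on exactly two, one of each type); your application is nonetheless valid because you compare two planes that are both of the form $\Pi(\cdot,\cdot)$.
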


\begin{proof}
Indeed, if we had $\tau (\Pi(E,p))=\Pi(E',p')$ for some 
bundle $E'$ and some point $p'$, then by the proof of Lemma \ref{linespres} we would deduce that $G_L=\iota^*E'$ does  not depend on $L$. 
\end{proof}
 
%\vlad{je pense que ça vaut la peine d'écrire la discussion ci-dessus dans une preuve formelle. En effet, j'ai du mal à passer de [NR78] à notre situation. Je vais continuer à y réfléchir, mais j'ai besoin d'un peu de temps} \mic{pour le Lemma 2.3, si on detaille tout au lieu de faire référence à NR, ca risque de dévenir un page et démi. à discuter ensemble. dans l'état actuel ca ne me semble pas mal}\vlad{ça me va}

Let us define a {\it Hecke plane} in $\SU_C(3)$ as a plane whose lines are all Hecke lines. We have defined two distinct families of Hecke planes parametrized by $U_C(3,C)$
(which is nine dimensional), and we want prove that there is no other Hecke plane through the general point of $\SU_C(3)$. The key point to prove this claim is the following result \cite[Theorem 3]{hwang-hecke}:

\begin{prop}
The VMRT at a general point $[F]$ of the family of Hecke lines is the image of the relative flag 
manifold $Fl(F)\stackrel{\psi}{\lra}C$ by the linear system $|\cO_{Fl(F)}(1,1)\otimes 
\psi^*K_C|$, into the projectivized tangent space of $\SU_C(3)$ at $F$, that is, $\PP (H^1(C,\mathcal{E}nd_0(F)))\simeq \PP (H^0(C,S_{2,1}(F)\otimes K_C)^\vee).$
\end{prop}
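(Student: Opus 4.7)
The plan is to identify the Hecke lines through a general $[F]$ with points of the relative flag manifold $\psi:Fl(F)\to C$, compute the Kodaira--Spencer tangent direction at $[F]$ for each such line, and recognize the resulting morphism $Fl(F)\to\PP(H^1(C,\mathcal{E}nd_0(F)))$ as the one attached to the linear system $|\cO_{Fl(F)}(1,1)\otimes\psi^*K_C|$.

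First, I would match Hecke lines through $[F]$ with flags in fibers of $F$. If $\delta(E,p,L)$ passes through $F=F_P$ for some plane $P\supset L$ in $E_p$, then restricting the defining sequence $0\to F\to E\to \cO_p\otimes(E_p/P)\to 0$ to the fiber at $p$ yields a map $F_p\to E_p$ with image $P$ and kernel a line $\ell\subset F_p$; this $\ell$ is precisely the class of $[E]$ under the canonical identification $\PP(\Ext^1(\cO_p,F))\cong\PP(F_p)$. The preimage $P'\subset F_p$ of $L\subset P$ under $F_p\twoheadrightarrow P$ is a 2-plane containing $\ell$, and the assignment $\delta(E,p,L)\mapsto(p,\ell,P')$ is bijective onto $Fl(F)$ for $F$ general; the inverse reconstructs $(E,p,L)$ from the flag and stability of $F$ passes to $E$ and to every $F_P$ in the pencil $P\supset L$.

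For the tangent class, I fix such a flag $(\ell\subset P')$, equivalent to fixing $(E,p,L)$, and recall that the Hecke line is the pencil $\{F_P\}_{P\supset L}$ parametrized by $P/L\subset E_p/L$. With a local coordinate $z$ at $p$ and a trivialization $e_1,e_2,e_3$ of $E$ near $p$ chosen so that $L=\langle e_1(p)\rangle$ and $P_0=\langle e_1(p),e_2(p)\rangle$, the pencil becomes $P_t=\langle e_1(p),(e_2+te_3)(p)\rangle$ and $F_{P_t}$ is locally generated by $e_1,\,e_2+te_3,\,ze_3$. The Čech cocycle for this family, with respect to the cover of $C$ by a small disc around $p$ and $C\setminus\{p\}$, represents a nonzero class in $H^1(C,\mathcal{E}nd_0(F))$, and Serre duality $H^1(C,\mathcal{E}nd_0(F))\cong H^0(C,\mathcal{E}nd_0(F)\otimes K_C)^\vee$ identifies it with the functional
\[
\phi\;\longmapsto\;\mathrm{pr}_{F_p/P'}\circ\phi(p)\circ\iota_\ell\;\in\;\ell^\vee\otimes(F_p/P')\otimes(K_C)_p,
\]
where $\iota_\ell:\ell\hookrightarrow F_p$ is the inclusion and $\mathrm{pr}_{F_p/P'}$ is the projection modulo $P'$.

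Globalizing over $Fl(F)$ with tautological subbundles $\cU_1\subset\cU_2\subset\psi^*F$, this functional becomes a section of $\cU_1^\vee\otimes(\psi^*F/\cU_2)\otimes\psi^*K_C$. Since $\det F=\cO_C$ gives $\psi^*F/\cU_2\cong\det\cU_2^\vee$, this line bundle is exactly $\cO_{Fl(F)}(1,1)\otimes\psi^*K_C$; relative Borel--Weil then identifies its sections with $H^0(C,S_{2,1}(F)\otimes K_C)=H^1(C,\mathcal{E}nd_0(F))^\vee$, so the VMRT map coincides with the one attached to this linear system. The main obstacle is the Kodaira--Spencer computation carried out with correct signs and identifications, together with the injectivity of the resulting map for $F$ general: this reduces to showing that distinct flags produce non-proportional functionals, a consequence of the generic surjectivity at any $p\in C$ of the evaluation $H^0(C,\mathcal{E}nd_0(F)\otimes K_C)\to(\mathcal{E}nd_0(F)\otimes K_C)_p$, itself ensured by the stability of $F$.
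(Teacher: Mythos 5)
Your proposal is correct in outline, but it is worth pointing out that the paper does not actually prove this Proposition: it is quoted verbatim from \cite[Theorem 3]{hwang-hecke}, and the surrounding text only uses it as a black box to count Hecke planes through a general Hecke line. What you have written is therefore a genuinely different (and more self-contained) route: you reconstruct the statement by hand, identifying the Hecke lines through $[F]$ with flags $(\ell\subset P')$ in the fibres of $F$ via the kernel of $F_p\to E_p$, computing the Kodaira--Spencer class of the pencil $F_{P_t}$ as the \v{C}ech cocycle $z^{-1}E_{32}$, and pairing it under Serre duality to land in the line $\cU_1^\vee\otimes(\psi^*F/\cU_2)\otimes\psi^*K_C\simeq\cO_{Fl(F)}(1,1)\otimes\psi^*K_C$ (using $\det F\simeq\cO_C$), with the completeness of the linear system supplied by relative Borel--Weil, $\psi_*\cO_{Fl(F)}(1,1)\simeq S_{2,1}(F)$. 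All of these steps check out; they are essentially the computation underlying Hwang's theorem, so what your approach buys is independence from the citation (whose stated hypotheses on rank and genus do not obviously cover $g=2$, $r=3$), at the cost of redoing a standard but sign-sensitive cocycle computation. Two small remarks: the statement only concerns the \emph{image} of $Fl(F)$, so the injectivity discussion at the end (generic surjectivity of the evaluation $H^0(\mathcal{E}nd_0(F)\otimes K_C)\to(\mathcal{E}nd_0(F)\otimes K_C)_p$, which requires $h^0(\mathcal{E}nd_0(F)\otimes K_C(-p))=0$ and is not completely immediate from stability alone) is not needed for the claim as stated; and you should say a word on why the four-dimensional family of flags accounts for \emph{all} Hecke lines through the general $[F]$ (a dimension count: the $11$-dimensional family $\cF_H$ cuts out a $4$-dimensional subfamily through a general point of the $8$-fold, matching $\dim Fl(F)=4$).
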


\begin{coro}
A general Hecke line is contained in exactly two Hecke planes.
\end{coro}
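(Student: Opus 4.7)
The plan is to construct two explicit Hecke planes through $\delta$ and then use the VMRT description from the preceding proposition to show that no further Hecke planes exist. Writing $\delta = \delta(E,p,L)$, the type-$1$ Hecke plane $\Pi(E,p)$ contains $\delta$ and is the unique plane of its type to do so, by the uniqueness of $(E,p)$ from Lemma \ref{unique}. For a Hecke plane of the other type, apply the involution $\tau$: $\tau(\delta)$ is again a Hecke line by Lemma \ref{linespres}, hence equals $\delta(E',p',L')$ for uniquely determined $(E',p')$, and so $\delta \subset \tau(\Pi(E',p'))$, which is a Hecke plane of the second type. These two planes are distinct by the Proposition above, which establishes that a type-$1$ plane and its $\tau$-transform are never of the same type.

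For the uniqueness bound, fix a general $F \in \delta$, so that $F$ is a general point of $\SU_C(3)$. Any Hecke plane containing $\delta$ also contains $F$, and the pencil of lines through $F$ in such a plane yields a linear $\PP^1$ inside $\PP(T_F \SU_C(3))$ contained in the VMRT and passing through the tangent direction $[\delta]$. Hence it suffices to show that at most two such lines in the VMRT pass through a general point.

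This is achieved by a degree argument on the embedding of the VMRT by $|\cO_{Fl(F)}(1,1) \otimes \psi^* K_C|$. Since $\deg K_C = 2$, any curve in $Fl(F)$ dominating $C$ has $\psi^* K_C$-degree at least $2$, while a line in the embedding has total degree $1$; so a line in the VMRT lies in a single fiber $Fl(1,2;F_p) \cong Fl(1,2;3)$ of $\psi$ and has $\cO(1,1)$-degree $1$. Writing $\cO(1,1)$ on $Fl(1,2;3) \subset \PP^2 \times \PP^2$ as $\cO(1,0)\otimes \cO(0,1)$ with both factors effective, the splitting $1=a+b$ with $a,b\ge 0$ forces one factor to be zero, so the line is a fiber of one of the two natural projections $Fl(1,2;F_p) \to \PP(F_p)$ and $Fl(1,2;F_p) \to \PP(F_p^\vee)$. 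Through a general point of $Fl(1,2;F_p)$ there passes exactly one fiber of each, giving at most two lines in the VMRT, hence at most two Hecke planes through $\delta$. Combined with the explicit construction of two distinct such planes in the first paragraph, this proves the claim. The main subtle step is the VMRT-based upper bound; its sharpness depends on the explicit matching of each of the two fibers with a bona-fide Hecke plane, which is guaranteed by the construction via upper Hecke modifications and $\tau$-duality described above.
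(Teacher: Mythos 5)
Your proof is correct and follows the same overall strategy as the paper: pass to the VMRT at a general point $[F]$ of $\delta$, show that the line in the VMRT determined by a Hecke plane lies in a single fiber $Fl(1,2;F_p)$ of $\psi$, and conclude because through a point of the three-dimensional flag manifold there pass exactly two lines (the fibers of the two projections). The one step you handle genuinely differently is the reduction to a single fiber: the paper argues modularly, using the fact that the point $p'$ of a Hecke line $\delta(E',p',L')$ is uniquely determined in general, so the induced rational map from the dual plane of $\Pi$ to $C$ must be constant; you instead use a degree computation, noting that $\psi^*K_C$ has degree at least $2$ on any curve dominating $C$ while a line has total degree $1$. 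Your argument is slicker but implicitly assumes that $\cO_{Fl(F)}(1,1)$ has non-negative degree on the curve in question (otherwise the splitting $1=a+b$ with $a,b\ge 0$ is not forced); this is true because $F$ is semistable of degree zero, hence $F$ and $F^\vee$ are nef and so is $\cO(1,1)$ on $Fl(F)\subset \PP(F)\times_C\PP(F^\vee)$, but you should say so. You are also more explicit than the paper about the existence half of the statement, exhibiting the two planes $\Pi(E,p)$ and the $\tau$-transform $\tau(\Pi(E',p'))$ and invoking the proposition on $\tau(\Pi(E,p))=\Pi^*(\iota^*E,\iota(p))$ to see they are distinct; the paper leaves this to the earlier construction via the Hecke diagram (the bundles $G_L$). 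Both routes are sound.
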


\proof Consider a general Hecke line $\delta(E,p,L)$ in $\SU_C(3)$, and let $\Pi$ be a Hecke plane 
containing this line. Let $[F]$ be a general point on $\delta(E,p,L)$, and let us apply the previous 
Proposition. In the VMRT at $[F]$, $\delta(E,p,L)$ defines a point $\delta$, and  $\Pi$ defines a projective line $\pi$ through $\delta$. Observe that any line in $\Pi$ is a Hecke line, hence 
of the form $\delta(E',p',L')$; since the point $p'$ is uniquely determined in general, we get a rational map from the dual plane of lines in $\Pi$, to $C$, and such a map must be constant, equal to $p$. 
This means that $\pi$ is contained in the fiber over $p$ of the relative flag bundle $Fl(F)$. 
So we just need to consider the ordinary three-dimensional flag manifold $Fl(1,2,\CC^3)$. And then it is clear that any point $(L_0\subset P_0)$ is contained in
exactly two lines, parametrizing flags of the form  $(L_0\subset P)$ or
$(L\subset P_0)$. \qed

%\lau{J'ai encore un doute pour dire que tous les plans de Hecke = plans dont toutes les droites sont des droites de Hecke, sont forcément d'un de ces deux types. Help?} \vlad{c'est une question intéressante, mais je ne vois pas pour l'instant comment faire. D'ailleurs, pour comprendre $D_{Z_{10}}(v)$ ceci est nécessaire ou pas? (ce n'est pas une question rétorique, je pense que je me suis perdu où on en est au niveau de $D_{Z_{10}}(v)$).}\lau{Pour moi il faudrait pouvoir dire que chaque point de $D$ donne deux plans de Hecke, un dans chacune des familles ci-dessus. Si on sait que tous le plans de Hecke sont 
%de l'un de ces deux types, je pense qu'on a gagné (et on n'a pas besoin de 3.3). S'il peut y en avoir d'autres, on est coincés. Peut-être faut-il reprendre la démarche de Teixidor. Sinon mon idée de base était d'utiliser la VMRT, et je crois que ça marche!}

\medskip
It would be interesting to describe the VMRT at a general point of 
the families $\cF_{R_1}$ and $\cF_{R_2}$. They must be $\PP^3$-bundles over the families of $\PP^4$'s in the rulings passing through the general point  $[E]\in SU_C(3)$, which are parametrized by curves $C_1$ and $C_2$. Note that like the rulings $R_1$ and $R_2$, the curves 
$C_1$ and $C_2$ map to $\Pic^1(C)$, and in fact these curves have a simple description in terms of theta divisors. Each of them parametrizes line bundles $L$ of degree -1 such that $h^0(L^\vee\otimes E)\neq 0$. The locus of line bundles $L^\vee \in Pic^1(C)$ that verify this condition is just the \it theta divisor \rm associated with $E$, which in this case is a divisor in $\Pic^1(C)$ belonging to the linear system $|3\Theta|$. For general $E$, the associated theta divisor is smooth and irreducible. An easy computation shows that the generic element of $|3\Theta|$ is a curve of genus $10$.

 \medskip
Let us summarize the results we will use in the sequel. 

\begin{prop} \label{recap}
The moduli space $\SU_C(3)$ admits three covering families of lines:
\begin{itemize}
\item The family $\cF_H$ of Hecke lines, which is birationally fibered over $C$.
\item Two families of lines $\cF_{R_1}$ and  $\cF_{R_2}$ contained in the rulings, 
which are 
birationally fibered over $\Pic^1(C)$ and exchanged by the involution $\tau$; they both
descend to the same family $\overline{\cF}_R$ inside $G(2,V_9)$, which is still 
birationally fibered over the abelian surface $\Pic^1(C)$.
\end{itemize}
The moduli space $\SU_C(3)$ also admits  four covering families of planes:
\begin{itemize}
\item Two families $\cP_H$, $\cP_{H^*}$ of Hecke planes, both
birationally fibered over $C$.
\item Two families  $\cP_{R_1}$ and  $\cP_{R_2}$ of planes contained in the rulings,
both birationally fibered over $\Pic^1(C)$.
\end{itemize}
\end{prop}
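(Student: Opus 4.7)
Since the statement collects the constructions carried out earlier in this section, the plan is to check that each item is an immediate consequence of the preceding lemmas, together with a routine dimension count.

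For the three families of covering lines, the starting point is the classification of \cite{mustopa-teixidor}. The Hecke family $\cF_H$ is parametrized by the $\PP^2$-bundle $\hat{\cF}_H$ over $U_C(3,C)$ whose fiber at $(E,p)$ is $\PP(E_p)$, via $(E,p,L)\mapsto \delta(E,p,L)$; Lemma~\ref{unique} ensures this parametrization is birational, and the forgetful map $(E,p,L)\mapsto p$ yields the desired fibration over $C$. For the rulings $\cF_{R_1}$ and $\cF_{R_2}$, the construction of Section~2.3.2 produces birational $G(2,5)$-bundles over $U_C(2,1)$ by Lemma~\ref{unique-ruling}; composition with the Albanese $U_C(2,1)\to \Pic^1(C)$ gives the asserted fibration. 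The lemma on $\tau$ exchanging the two rulings in Section~2.3.3, combined with the fact that $\tau$ is the covering involution of $\theta$, implies that the two families have the same image $\overline{\cF}_R \subset G(2,V_9)$.

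For the Hecke planes, I would obtain $\cP_H$ directly as the image of $U_C(3,C)$ under $(E,p)\mapsto \Pi(E,p)$, birational onto its image by the same argument proving injectivity of $\phi_p$, and fibered over $C$ via $(E,p)\mapsto p$. The second family $\cP_{H^*} := \tau(\cP_H)$ is distinct from $\cP_H$ because the Proposition preceding the statement shows $\tau(\Pi(E,p)) = \Pi^*(\iota^*E,\iota(p))$ is not of the form $\Pi(E',p')$. The main obstacle, and really the only nontrivial point, is to verify that \emph{no} further family of Hecke planes exists: this is exactly where the VMRT corollary intervenes, since a general Hecke line is contained in precisely two Hecke planes, so any additional family of Hecke planes would have to share its general element with $\cP_H$ or $\cP_{H^*}$.

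Finally, the ruling-plane families $\cP_{R_1}$ and $\cP_{R_2}$ arise by replacing the $\PP^1 \subset \PP(\Ext^1(E_2,E_1))$ of the ruling-line construction by a $\PP^2$; the proof of Lemma~\ref{unique-ruling} adapts verbatim to give birational $G(3,5)$-bundles over $U_C(2,1)$, hence fibered over $\Pic^1(C)$. The covering property of each family follows from dimension counts: the Hecke-plane families are $9$-dimensional and the ruling-plane families are $11$-dimensional, and in each case the evaluation map to the eight-dimensional $\SU_C(3)$ has generic fiber of positive expected dimension.
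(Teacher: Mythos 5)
Your assembly of the line-family statements and of the Hecke-plane classification is essentially the paper's intended reading: the three covering families of lines are taken from \cite{mustopa-teixidor} together with Lemmas \ref{unique} and \ref{unique-ruling}, and the fact that no third covering family of Hecke planes exists is exactly the content of the corollary that a general Hecke line lies in precisely two Hecke planes. That part is fine.

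There is, however, a genuine gap, and it is precisely the one point the paper's proof treats as nontrivial. To classify the covering families of planes you must rule out a plane $\Pi$ through a general point all of whose lines are ruling lines, say from $\cF_{R_1}$, but which is \emph{not} contained in a single $\PP^4$ of the ruling. Lemma \ref{unique-ruling} assigns to each such line a unique member of the ruling, but nothing you say prevents that member from varying as the line moves in the dual plane $\Pi^\vee$; your sentence about adapting Lemma \ref{unique-ruling} ``verbatim'' only constructs $\cP_{R_1}$ and $\cP_{R_2}$, it does not exclude this extra configuration. The paper closes this case as follows: the map $\Pi^\vee\to R_1\to\Pic^1(C)$ must be constant (a plane admits no nonconstant map to an abelian surface), so the degree $-1$ line subbundle $E_1$ is the same for all lines of $\Pi$; the pencil of lines of $\Pi$ through a fixed general $[E]$ then yields at least two independent sections of $F=\Hom(E_1,E)$, placing $F$ in the Brill--Noether locus $\cW^1_{3,3}$, which by \cite{bgn} has dimension $\rho^1_{3,3}=6$; since $F$ determines $E_1$ up to $3$-torsion this leaves only six parameters for $E$, contradicting generality. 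Without this (or an equivalent) argument the list of four covering families of planes is not established. A secondary, smaller weakness: ``generic fiber of positive \emph{expected} dimension'' does not by itself prove that a family covers $\SU_C(3)$; for the planes the covering property should instead be deduced from the fact that the lines they contain already form covering families.
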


\proof The only point that does not follow from the previous discussion is that there is not other plane $\Pi$ through the general point of $SU_C(3)$, which is  made of lines contained in one of the rulings. What we have 
to exclude is that the lines in $\Pi$ belong to $\PP^4$'s, say from
$R_1$, that move with the line. In this situation, we would get a map 
from the dual plane $\Pi^\vee$ to $R_1$, that we could compose with 
the projection to $\Pic^1(C)$. The composition would need to be constant, which means that the line bundle we denoted $E_1$ must be constant. 

But then, consider the pencil of lines in $\Pi$ passing through a fixed general point $[E]$ of $\SU_C(3)$. For each of these lines we get a morphism $E_1\lra E$, and the quotient bundle has to change with the line. So we deduce that $F=\Hom(E_1,E)$ has at least two global sections. 
This means that $F$ belongs to the Brill-Noether stratum $\cW_{3,3}^1$,
which according to \cite{bgn} has the expected dimension given by the Brill-Noether number $\rho_{3,3}^1=6$. Since $F$ determines $E_1$ 
up to $3$-torsion, this leaves only six parameters also for $E$, 
which cannot be general. 
\qed

\section{ODL interpretations}

In this section we summarize the results of \cite{GSW} and \cite{bmt} about the 
relationships between the moduli space $\SU_C(3)$, the Coble cubic and a skew-symmetric
cubic tensor in nine variables.

\subsection{On the Coble side}
The starting point is a general $v\in \wedge^3V_9$. By the Borel-Weil theorem, 
there is a natural identification 
$$\wedge^3V_9\simeq H^0(\PP(V_9^\vee),\wedge^2\cU(1)),$$
where $\cU$ denotes the rank eight tautological bundle. This allows to consider 
the skew-symmetric cubic tensor $v$ as a family of skew-symmetric bilinear forms, 
and then the loci where these forms drop rank: first a Pfaffian cubic hypersurface,
and then its singular locus where the rank drops to six. This turns out to be an
abelian surface, and the Pfaffian hypersurface is the corresponding Coble cubic, being singular exactly on the abelian surface. 

\subsection{On the dual side}
The Borel-Weil theorem gives another identification 
$$\wedge^3V_9\simeq H^0(\PP(V_9),\wedge^3\cQ),$$
where $\cQ$ denotes the rank eight tautological quotient bundle. 
This allows to consider 
the skew-symmetric cubic tensor $v$ in nine variables as a family of cubic tensors in eight variables,
which are considerably simpler. In particular, $\wedge^3V_8$ admits finitely many $\GL(V_8)$-orbits, and for each orbit closure $Y\subset \wedge^3V_8$ there is an orbital degeneracy locus (ODL) $D_Y(v)\subset \PP(V_9)$. Set theoretically, this is simply the set 
of points $x$ for which $v$ defines an element of  $\wedge^3\cQ_x\simeq \wedge^3V_8$
that falls into $Y$. For $v$ generic, we need only consider the orbits $Y_i$ of codimension $i\le 8$, since this will also be the codimension of  $D_{Y_i}(v)\subset \PP(V_9)$. 
Moreover, this locus will be almost always singular, with singular locus  $D_{\Sing(Y_i)}(v)$,
and it will come with some natural desingularisations, induced by desingularisations 
of $Y$ of a special type, called Kempf collapsings. Concretely, in this case we get the 
following loci:
\begin{itemize}
    \item The Coble sextic 
    $$\cS:=\{[U_1]\in \PP(V_9)\mid \exists U_6\supset U_1, \;\; v\in U_1\wedge (\wedge^2V_9)
+    \wedge^2 U_6 \wedge V_9\}\subset \PP(V_9).$$
  %  This sextic is projectively dual to the Coble cubic (\cite{ortega-coble, nguyen-coble}).
    \item Its singular locus (five dimensional, corresponding to strictly semistable bundles)
    $$ \Sing(\cS):= \{[U_1]\in \PP(V_9)\mid \exists U_5\supset U_2 \supset U_1, \;\;
    v\in  U_2\wedge (\wedge^2V_9) + \wedge^2 U_5 \wedge V_9 \}. $$
   % \lau{Pourquoi déjà est-ce un $\PP^1$-fibré?}
 %   Notice that for a general trivector $\omega$ and a general point $[V_1]\in\Sing_\cS$ there are two flags $V_1\subset V_2 \subset V_5$ and $V_1\subset V_2'\subset V_5'$ satisfying the above condition. Thus the set of flags $[V_1\subset V_2 \subset V_5]$ satisfying the above condition is a variety $\overline{\Sing_\cS}$ which is a generically $2:1$ cover of $\Sing_\cS$. Since the bundles in $\Sing_\cS\subset \SU_C(3,\cO_C)$ are strictly semistable, $\Sing_\cS$ is \textcolor{blue}{generically} a fibration over the Jacobian of $C$, with fiber isomorphic to $\SU(2,\cO_C)\cong \PP^3$.
    \item The fourfold $\cK$ parametrizing rank three vector bundles with trivial determinant, splitting as a sum of three line bundles: 
    $$ \cK:= \{[U_1]\in \PP(V_9)\mid \exists U_6\supset U_3\supset U_1, \;\, 
    v\in  U_1\wedge (\wedge^2V_9) + U_3\wedge U_6 \wedge V_9+\wedge^3 U_6 \} .$$
    As proved in \cite{bmt}, $\cK$ is a singular model of the generalized Kummer fourfold associated to the Jacobian of $C$. Moreover, its singular locus (when two of the three line bundles are equal) is a surface which is birational to the Jacobian itself, but with $81$ singular points in bijection with the  $3$-torsion points (when the three line bundles are equal).
\end{itemize}

\subsection{The rulings}
We have seen that the two rulings of $\SU_C(3)$ by $\PP^4$'s are parametrized by 
pairs of bundles $(E_1,E_2)$, of ranks $(1,2)$ or $(2,1)$, such that $\det(E_1)\otimes\det(E_2)=\cO_C$. Up to a shift in the degrees, this is very 
similar to the data that defines a strictly semistable rank three vector bundle on 
$C$, hence a point
in $\Sing(\cS)$. So we expect the latter to be the parameter space for some natural
families of five-dimensional subspaces of $V_9$. This is exactly the content of the
next two statements. 

Since $\SU_C(3)$ is a double cover of $\PP(V_9)$ branched over
the sextic $\cS$, a $\PP^4=\PP(U_5)$ in $\PP(V_9)$ will lift to a pair of $\PP^4$'s
in $\SU_C(3)$, meeting in codimension one, exactly when the sextic hypersurface 
restricts on $\PP(U_5)$ to a double cubic. We will check this phenomenon does happen 
by rather indirect arguments. Define 
 $$ \overline{\Sing}(\cS):= \{(U_1\subset U_2\subset U_5)\in Fl(1,2,5, V_9),  \;\;
    v\in  U_2\wedge (\wedge^2V_9) + \wedge^2 U_5 \wedge V_9 \}. $$

\begin{lemma}
The projection $ \overline{\Sing}(\cS)\ra \Sing(\cS)$ is generically finite of degree two. 
\end{lemma}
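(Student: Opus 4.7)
The plan proceeds in two steps: check that the two varieties have the same dimension (so the projection is generically finite), then count the fiber over a generic point to get degree two.

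For the dimension count, the flag manifold $Fl(1,2,5,V_9)$ has dimension $27$, and $\overline{\Sing}(\cS)$ is cut out by the linear condition that the image of $v$ in the quotient $\wedge^3 V_9/(U_2\wedge\wedge^2 V_9+\wedge^2 U_5\wedge V_9)$ vanishes. Using the filtration of $\wedge^3 V_9$ by $U_5$, this quotient decomposes as $\wedge^3(V_9/U_5)\oplus (U_5/U_2)\otimes\wedge^2(V_9/U_5)$, of total dimension $4+18=22$. For generic $v$ these $22$ conditions are independent, so $\dim\overline{\Sing}(\cS)=5=\dim\Sing(\cS)$ and the projection is generically finite; only the degree remains to be computed.

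To count the fiber over a generic $[U_1]\in\Sing(\cS)$, I would quotient by $U_1$: setting $V_8:=V_9/U_1$ and letting $\bar v\in\wedge^3 V_8$ be the induced trivector, the fiber identifies with the set of flags $(W_1\subset W_4)\subset V_8$ of dimensions $(1,4)$ satisfying $\bar v\in W_1\wedge\wedge^2 V_8+\wedge^2 W_4\wedge V_8$, where $W_1=U_2/U_1$ and $W_4=U_5/U_1$. For a generic $[U_1]\in\Sing(\cS)$, the trivector $\bar v$ is a general element of the specific $\GL(V_8)$-orbit closure whose orbital degeneracy locus in $\PP(V_9)$ is precisely $\Sing(\cS)$. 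A direct orbit/stabilizer analysis on a normal form for $\bar v$ should then produce exactly two such flags. Geometrically, the two flags should correspond to the two rulings $R_1,R_2$ of $\SU_C(3)$ by $\PP^4$'s, which are exchanged by the covering involution $\tau$ (acting trivially on $\PP(V_9)$); this provides the a priori reason for expecting a count of two.

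The main obstacle is to execute the orbit/stabilizer analysis cleanly, in particular to exclude extraneous flags that might come from non-generic behaviour of $\bar v$. A natural tool for this is a Kempf-collapsing style desingularization of the relevant codimension-three orbit closure in $\wedge^3 V_8$; pushing forward the resulting class to $\PP(V_9)$ and comparing with the known class of $\Sing(\cS)$ would confirm that the degree is exactly two.
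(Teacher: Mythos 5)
Your overall strategy is sound, and the first half is essentially complete: the rank of the quotient bundle on $Fl(1,2,5,V_9)$ is indeed $22$ (your filtration giving $\wedge^3(V_9/U_5)\oplus (U_5/U_2)\otimes\wedge^2(V_9/U_5)$, of dimension $4+18$, matches the codimension $84-62=22$ used elsewhere in the paper), the bundle is globally generated by $\wedge^3V_9$, so for generic $v$ the locus $\overline{\Sing}(\cS)$ is purely $5$-dimensional; since $\Sing(\cS)$ is by definition its image and is also $5$-dimensional, generic finiteness follows. Your reduction of the fiber over a generic $[U_1]$ to counting flags $(W_1\subset W_4)$ in $V_8=V_9/U_1$ with $\bar v\in W_1\wedge\wedge^2V_8+\wedge^2W_4\wedge V_8$ is also correct, since $U_1\wedge\wedge^2V_9\subset U_2\wedge\wedge^2V_9$ makes the membership condition descend to the quotient, and for generic $v$ the tensor $\bar v$ lies in the open stratum of the codimension-three orbit closure in $\wedge^3V_8$.

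The gap is that the actual count of two — which is the entire content of the lemma beyond generic finiteness — is never carried out. You state that "a direct orbit/stabilizer analysis on a normal form for $\bar v$ should then produce exactly two such flags" and flag this yourself as the main obstacle, but neither the normal form nor the count is supplied, and the suggested alternative (pushing forward a Kempf-collapsing class and comparing with the class of $\Sing(\cS)$) is likewise only named, not executed; note also that a class comparison alone would give the degree only after one separately rules out positive-dimensional or embedded contributions. Moreover, the heuristic you offer for "expecting" two — that the flags correspond to the two rulings $R_1,R_2$ exchanged by $\tau$ — cannot be promoted to a proof here without circularity: in this paper the identification of the two $\PP^4$'s over $\PP(U_5)$ with the two rulings is established \emph{after} and \emph{using} this lemma (Proposition \ref{rulings-SingS}). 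For what it is worth, the paper itself does not prove the statement in-text either: its proof is a one-line citation to \cite[Remark 3.1]{bmt}, where the degree-two computation is performed. So your write-up is an honest sketch of a plausible self-contained route, but as it stands the decisive step is an unproved assertion.
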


\begin{proof}
This is contained in \cite[Remark 3.1]{bmt}.
\end{proof}

\begin{prop}\label{rulings-SingS}
Let  $(U_1\subset U_2\subset U_5)$ be general in $ \overline{\Sing}(\cS)$. 
Then $\PP(U_5)\cap \cS$ is a double cubic in $\PP(U_5)\simeq\PP^4$, and
$\theta^{-1}(\PP(U_5))$ is the union of two copies of $\PP^4$ in $\SU_C(3)$.
\end{prop}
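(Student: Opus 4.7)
The plan is to realize $\theta^{-1}(\PP(U_5))$ as the union of two ruling $\PP^4$'s in $\SU_C(3)$ exchanged by the involution $\tau$: the double cubic statement then follows from the standard fact that a double cover of $\PP^4$ splits into two components if and only if its branch divisor is a perfect square.

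First, I would show that for any ruling $\Pi = \PP(\Ext^1(E_2, E_1)) \subset \SU_C(3)$, the theta map $\theta$ restricts to a linear embedding. Indeed, $\theta^*\cO_{\PP(V_9)}(1)$ is the ample generator of $\Pic(\SU_C(3))$, which by the very definition of a ``line in the ruling'' restricts to $\cO_\Pi(1)$. Since every line of $\Pi$ has theta-degree $1$, the morphism $\theta|_\Pi$ cannot collapse any line, hence is a linear embedding of $\Pi$ onto some $\PP^4 = \PP(U_5) \subset \PP(V_9)$. Now the involution $\tau$ exchanges $R_1$ and $R_2$ and satisfies $\theta \circ \tau = \theta$, so a generic $R_1$-ruling $\Pi$ and its image $\tau(\Pi)$ of type $R_2$ share the same image $\PP(U_5)$. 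The two $\PP^4$'s $\Pi$ and $\tau(\Pi)$ are distinct and each dominates $\PP(U_5)$, so together they exhaust the degree-$2$ preimage $\theta^{-1}(\PP(U_5))$. Consequently, the branch divisor $\cS \cap \PP(U_5)$ is non-reduced, and since $\cS$ has degree $6$, it must be a double cubic.

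To conclude for a generic $(U_1, U_2, U_5) \in \overline{\Sing}(\cS)$, I would match the two natural $5$-dimensional parameter spaces. The family of $R_1$-rulings (equivalently $R_2$-rulings) is parametrized by $U_C(2,1)$, of dimension $4g - 3 = 5$, so the family of $U_5$'s arising as $\theta$-images of rulings is $5$-dimensional in $G(5, V_9)$. On the other side, by the preceding Lemma, $\overline{\Sing}(\cS)$ is also $5$-dimensional, being $2$-to-$1$ over the $5$-dimensional $\Sing(\cS)$. The intersection $\Pi \cap \tau(\Pi)$, which is the ramification of $\theta$ over $\PP(U_5)$, lies in $\Sing(\SU_C(3))$ and maps isomorphically onto $\Sing(\cS) \cap \PP(U_5)$; this recovers a natural $U_1$, while $U_2$ records which of the two rulings through $[U_1]$ we select. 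Both $5$-folds being irreducible with matching $2$-to-$1$ structures over $\Sing(\cS)$, they must be birationally identified.

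The principal obstacle, which the paper itself warns will be handled ``by rather indirect arguments,'' is verifying that the $U_5$ obtained from a ruling genuinely lies in $\overline{\Sing}(\cS)$, i.e.\ that $v \in U_2 \wedge \wedge^2 V_9 + \wedge^2 U_5 \wedge V_9$ for some $U_2 \subset U_5$. A direct algebraic check would require unpacking the Borel--Weil identification $\wedge^3 V_9 \simeq H^0(\PP(V_9), \wedge^3 \cQ)$ in terms of the extension data $(E_1, E_2)$, which is considerably more involved. I would instead settle for the indirect argument above, combining irreducibility with the matching $2$-to-$1$ structures to conclude that a general element of $\overline{\Sing}(\cS)$ arises from a ruling, which completes the proof.
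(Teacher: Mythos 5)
Your proposal inverts the logical order of the paper's argument, and in doing so it leaves the essential step unproved. The proposition is a statement about the specific linear spaces $\PP(U_5)$ cut out by the tensor $v$ via the ODL condition $v\in U_2\wedge(\wedge^2V_9)+\wedge^2U_5\wedge V_9$; the paper proves it by pure multilinear algebra, stratifying the flags $(W_1\subset W_6)$ of $\cS$ by the intersection dimensions $(\dim(W_6\cap U_2),\dim(W_6\cap U_5))$, eliminating all strata except $(2,4)$ by Bertini-type rank-versus-dimension counts, and then computing (with Macaulay2) that the surviving locus has degree $3$ in $\PP(U_5)$, so that the sextic $\cS$ cuts $\PP(U_5)$ doubly along a cubic. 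The identification of these $\PP^4$'s with the rulings is deliberately deferred: the paper only establishes it \emph{after} the proposition, via the covering computation and the involution argument, and that later identification uses the proposition as input. Your plan instead takes the ruling description as the starting point and tries to transport it to $\overline{\Sing}(\cS)$.

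The concrete gap is the step you yourself flag and then decline to fill: showing that the $U_5$ arising as $\theta(\Pi)$ for a ruling $\Pi$ actually satisfies the ODL condition, or conversely that a general $(U_1\subset U_2\subset U_5)\in\overline{\Sing}(\cS)$ arises from a ruling. Matching two irreducible $5$-dimensional families carrying $2$-to-$1$ structures over $\Sing(\cS)$ does not identify them; you would need an actual morphism between the parameter spaces, and constructing one is precisely the content you are trying to avoid (it requires unwinding the Borel--Weil identification against the extension data, or else the direct computation the paper performs). There is also a local confusion in your matching argument: the ramification of $\theta$ over $\PP(U_5)$, i.e.\ $\Pi\cap\tau(\Pi)$, is a cubic threefold, not a point, so it cannot ``recover a natural $U_1$''; in $\overline{\Sing}(\cS)$ the point $U_1$ varies in the $\PP^1=\PP(U_2)$ over a fixed $(U_2\subset U_5)$, which is a different structure from the one you describe. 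Without the identification of families, your argument establishes a true statement about images of rulings but says nothing about the general element of $\overline{\Sing}(\cS)$, which is what the proposition asserts.
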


\begin{proof}
Let  us fix a general $(U_1\subset U_2\subset U_5)$ in $ \overline{\Sing}(\cS)$. 
This means that $$v\in  U_2\wedge (\wedge^2V_9) + \wedge^2 U_5 \wedge V_9.$$
The intersection $\PP(U_5)\cap \cS$ parametrizes the flags 
$(W_1\subset W_6)$, with $W_1\subset U_5$, such that 
$$v\in W_1\wedge (\wedge^2V_9)+  \wedge^2 W_6 \wedge V_9.$$
%In order to compare these two conditions on $v$, w
We need to understand the relative 
position of the flags $(U_1\subset U_2\subset U_5)$ and $(W_1\subset W_6)$. 

\begin{lemma} 
$\dim(W_6\cap U_2)=2$ and $\dim(W_6\cap U_5)=4$.
\end{lemma}

\noindent {\it Proof of the Lemma.} We will argue as follows. Let 
$$\cA= U_2\wedge (\wedge^2 V_9) + \wedge^2 U_5 \wedge V_9, \qquad \cB =W_1\wedge \wedge^2 V_9 + \wedge^2 W_6 \wedge V_9.$$
The dimension of $\cA$ is $62$. 
We can stratify the space 
of flags $(W_1\subset W_6)$, with $W_1\subset U_5$, by the possible values of 
$\dim(W_1\cap U_2)$,  
$\dim(W_6\cap U_2)$ and $\dim(W_6\cap U_5)$. On each stratum $S$, the quotient 
 $\cP:=\cA/ \cA\cap \cB$
has a fixed dimension, hence defines a vector bundle of a certain rank. Moreover 
this bundle is generated by global sections and $v$ defines such a global section.
Although $S$ is in general not compact, we can apply the general Bertini type arguments 
to the space $\Gamma=U_2\wedge (\wedge^2 V_9) + \wedge^2 U_5 \wedge V_9$ of global sections,
which is finite dimensional and generates $\cP$ everywhere on $S$. 

Remark that we consider the same space $U_2\wedge (\wedge^2 V_9) + \wedge^2 U_5 \wedge V_9$ in two different ways: first as a trivial bundle $\cA$ on $S$, then as a space of global sections of the quotient bundle $\cP$.
We will be able to 
conclude that when $\rank_S(\cP)>\dim(S)$, the general $v\in U_2\wedge( \wedge^2 V_9) + \wedge^2 U_5 \wedge V_9$ defines a section that vanishes nowhere, which means that there
exists no flag  $(W_1\subset W_6)$ satisfying the corresponding conditions. If $\rank_S(\cP)\le \dim(S)$, the general section either vanishes nowhere, or on a locus 
of dimension $\dim(S)-\rank_S(\cP)$, and we need this difference to be equal to 
three (at least) since we are looking for a hypersurface in $\PP^4$. Note in particular that we can, and will always suppose in the sequel that $\dim(W_1\cap U_2)=0$, 
since  $\PP(U_2)$ is only one dimensional. 

When convenient, 
we will occasionally replace $\cP$ by some quotient $\overline{\cP}$, 
to which the same arguments will a fortiori apply. In some cases we will even need that this quotient is in fact obtained by pull-back from another parameter space $\overline{\cS}$; under the condition that $\rank(\overline{\cP})>\dim\overline{\cS}-3$, this will allow as well to eliminate the stratum $\cS$.  
In the end, a unique stratum will survive and we will have proved the Lemma. 
More precisely, we will prove that for all but one couple of values $(\dim(W_6\cap U_2),\dim(W_6\cap U_5))$ the section defined by $v$ vanishes on 
a locus of dimension at most two. So there will be a unique couple of values $(\dim(W_6\cap U_2),\dim(W_6\cap U_5))$ describing the intersection dimensions of a general flag $W_1\subset W_6$ inside $\PP(U_5)\cap \cS$. The conclusion will hold 
for any flag $(U_2\subset U_5)$ and for the general $v\in U_2\wedge (\wedge^2 V_9) + \wedge^2 U_5 \wedge V_9$, hence conversely for the general $v$ and the general flag $(U_2\subset U_5)$ such that $v\in U_2\wedge (\wedge^2 V_9) + \wedge^2 U_5 \wedge V_9$. 

Let us discuss the different possibilities for $(\dim(W_6\cap U_2),\dim(W_6\cap U_5))$, which can a priori be $(0,2), (1,2), (2,2), (0,3), (1,3), (2,3), (1,4), (2,4), (2,5)$. 
%is a sextic hypersurface threefold in $\PP^4$ (as soon as $\PP(V_5)$ is not contained inside $\cS$ \textcolor{red}{pourquoi est-ce vrai?}). Let $[U_1]\in \PP(V_5)\cap \cS$ be general. Without loss of generality we can assume that $[U_1]\notin \PP(V_2)$. To impose the condition that $[U_1]\in \cS$ we need to find $U_1\subset U_6$ such that $(\omega \mod U_1)\in \wedge^2 U_6 \wedge V_9$. Depending on the dimensions of $U_6\cap V_2$ and $U_6\cap V_5$ we can have different situations. 

%For fixed $V_1\subset V_2 \subset V_5\subset V_9$ and $U_1\subset U_6 \subset V_9$ let us denote by $W$ the quotient $ V_2\wedge \wedge^2 V_9 + \wedge^2 V_5 \wedge V_9 / ((U_1\wedge \wedge^2 V_9 + \wedge^2 U_6 \wedge V_9)\cap ( V_2\wedge \wedge^2 V_9 + \wedge^2 V_5 \wedge V_9))$. By letting the flag $[U_1\subset U_6]$ vary, we obtain a sheaf $\cW$ whose fiber is isomorphic to $W$, and a section $\tilde{\omega}$ of $\cW$ induced by $\omega$. Then the locus of points $[U_1\subset U_6]$ such that $[U_1]\in \PP(V_5)\cap \cS$ is the locus where $\tilde{\omega}$ vanishes. We will show that if $[U_1]\in \PP(V_5)\cap \cS$ is general then $V_2\subset U_6$ and $\dim(U_6\cap V_5)=4$.
\begin{itemize}
 \item[(0,2)] This is the open stratum.  We compute that 
 $$\cA\cap\cB=W_1\wedge U_5\wedge V_9+U_2\wedge (\wedge^2W_6)+
 %\wedge^2(W_6\cap U_5)\wedge V_9+
 (W_6\cap U_5)\wedge W_6\wedge U_5$$
 has dimension $46$. Since this is the first time we make such a computation, let us explain once and for all how we proceed; the bravest readers should be able to check the next computations by themselves. Let $v_1,\ldots,v_9  $ be a basis of $V_9$ such that $U_2=\langle v_1,v_2 \rangle$ and $U_5=\langle v_1,\ldots,v_5\rangle$. Since we are on the stratum indexed by the intersection dimensions $(0,2)$, we can suppose that $W_1=\langle v_3 \rangle$ and $W_6=\langle v_3,v_4,v_6,v_7,v_8,v_9 \rangle$. Then a brute force computation shows that a basis of $\cP:=\cA/\cB$ is given by the classes, if 
 we denote $v_{ijk}:=v_i\wedge v_j \wedge v_k$,  of the following $16$ trivectors: $v_{145}$, $v_{156}$, $v_{157}$, $v_{158}$, $v_{159}$, $v_{245}$, $v_{256}$, $v_{257}$, $v_{258}$, $v_{259}$, $v_{124}$, $v_{125}$, $v_{126}$, $v_{127}$, $v_{128}$, $v_{129}$.
 So $\cP$ has rank $16$ and $\cA\cap \cB$ has dimension $62-16=46$, as claimed. 
 
 Since the open stratum has dimension $19=16+3$, at first sight there is no obstruction. But let  $W_2:=W_6\cap U_5$, $W_4:=W_2+U_2$ and $W_8:=W_6+U_2$. The parameter space for such flags  $(W_1\subset W_2\subset W_4\subset W_8)$, with $U_2\subset W_4\subset U_5$ has dimension $11$ (six dimensions 
 for the choice of $W_2$ in $U_5$, transverse to $U_2$; then one extra dimension for the choice of $W_1$; $W_4$ is uniquely determined; finally four extra dimensions for the choice of $W_8$ containing $W_4$). Let 
 $$\cC:= W_2\wedge U_5\wedge U_9+U_2\wedge (\wedge^2W_8).$$
 We check that $\cA\cap \cB\subset \cC\subset\cA$ and that $\cC$ has dimension $53$, hence codimension $9$ in $\cA$.
 Therefore the zero locus of the section of the bundle $\cA/\cC$ induced by our 
 general $v$ must have dimension at most $11-9=2<3$. This means that a general point in the intersection $\PP(U_5)\cap \cS$ does not belong to this stratum.
 \item[(1,2)] Let $T_1:= W_6\cap U_2$ and $W_2:=W_6\cap U_5$. The parameter space for such flags has dimension 
 $17$: one dimension for the choice of $T_1$ in $U_2$, four dimensions for the choice of $W_1$ in $U_5$, then twelve dimensions for the choice of $W_6$ containing $W_2=T_1+W_1$. We compute that    
 $$\cA\cap\cB=W_1\wedge U_5\wedge V_9+U_2\wedge (\wedge^2W_6)+T_1\wedge W_6\wedge V_9$$
 has dimension $46$, so the rank of $\cP$ is $16=\dim(\cS)-1$, and we are safe. 
 \item[(2,2)] In this case $W_6\cap U_5=W_6\cap U_2=U_2$. But this implies that $W_1\subset U_2$, 
 which we excluded right from the beginning. 
 \item[(0,3)] Let $W_3:=W_6\cap U_5$. 
 The parameter space for such flags  $(W_1\subset W_3\subset W_6)$ has dimension $17$ ($6$ dimensions 
 for the choice of $W_3$ in $U_5$, transverse to $U_2$; then two extra dimensions for the choice of $W_1$ in $W_3$; finally nine extra dimensions for the choice of $W_6$ containing $W_3$). In this case $\cA\cap\cB$ has dimension $45$, so the rank of $\cP$
 is $17$ and we are safe. 
  \item[(1,3)] Let $T_1:=W_6\cap U_2$ and  $W_3:=W_6\cap U_5$.
     Here the parameter space has dimension $16$ and the dimension of $\cA\cap\cB$ jumps to $49$, so $\cP$ has rank $13$ on this stratum. So we need to dig a little bit more. Let  $W_8:=W_6+U_5$.
     In this case we check that 
     $$\cA\cap \cB=W_1\wedge U_5\wedge V_9+T_1\wedge W_6\wedge V_9+U_2\wedge (\wedge^2 W_6)+W_3\wedge U_5\wedge W_6\subset $$
     $$ \subset \cC := ((W_1+T_1)\wedge W_8\wedge V_9+\wedge^3 W_8)\cap \cA\subset\cA.$$
    The dimension of $\cC$ is $56$, so the rank of $\cA/\cC$ is $6$. 
    %The bundle $$\cA/\cC=(V_9/W_8)\otimes [(W_8\wedge (U_2+W_1))/(W_8\wedge (T_1+W_1)) +\wedge^2(U_5/(U_2+W_1)) ]$$
    But now $\cC$ and $\cA/\cC$  only depend on $W_1$, $T_1$ and $W_8$. The parameter space for $(W_1,T_1,W_8)$ has dimension eight (one dimension for $T_1$ in $U_2$; four dimensions for $W_1$ in $U_5$; and three extra dimensions for $W_8$, that contains $U_5$). So the zero locus of the section induced by $v$ has dimension at most $2$ inside this stratum. 
     \item[(2,3)] Here $W_3:=W_6\cap U_5$ has dimension $3$ and $W_6\supset U_2$.
     In this case we check that 
     $$\cA\cap \cB=W_1\wedge U_5\wedge V_9+U_2\wedge W_6\wedge V_9\subset 
     \cC := W_1\wedge U_5\wedge V_9+U_2\wedge (\wedge^2V_9)\subset\cA.$$
    The bundle $\cA/\cC$ has rank four and depends only on $W_1$ for which we have four parameters; so we are safe. 
     \item[(1,4)] Let $T_1:=W_6\cap U_2$ and  $W_4:=W_6\cap U_5$. The parameter space for these flags has dimension $13$ (one dimension for $T_1$ in $U_2$; four dimensions for $W_1$ in $U_5$; then two dimensions for $W_4$ in $U_5$ and containing $T_1+W_1$; finally six extra dimensions for $W_6$ containing $W_4$). We check that 
          $$\cA\cap \cB=W_1\wedge U_5\wedge V_9+T_1\wedge W_6\wedge V_9+U_2\wedge (\wedge^2W_6)+
          W_4\wedge U_5\wedge W_6$$
          has dimension $50$. So $\cP$ has rank $12$ on this stratum, and since $\dim(S)-\rank_S(\cP)=1<3$, we are safe. 
            \item[(2,5)] This means that $U_5\subset W_6$. But then $\cA\cap\cB=\wedge^2U_5\wedge V_9$. 
            A dimension count shows that the general $v$ does not belong to any such space. \qed
\end{itemize}

Now, let us go back to the proof of the proposition. The only possibility for $[W_1]$ to be a general point of the hypersurface $\PP(U_5)\cap \cS$ is that $\dim(W_6\cap U_2)=2$ and $\dim(W_6\cap U_5)=4$. The parameter space for such pairs $[W_1\subset W_6]$ has dimension $11$ while the quotient bundle $\cP$ has rank $8$ on this stratum. So the set of points $[W_1]\in \PP(V_5)\cap \cS$ such that $\dim(U_6\cap V_2)=2$ and $\dim(U_6\cap V_5)=4$ has expected dimension $3$, i.e. it is a hypersurface inside $\PP(U_5)$. 

With the help of \cite{Macaulay2} we can construct the parameter space for the stratum $(2,4)$ and the quotient bundle  $\cP$, and check that the degree of $\cO_{\PP(U_5)}(1)$ on the zero locus of a general  section of $\cP$ inside the parameter space is equal to three. So this zero locus projects to a cubic hypersurface  in $\PP(U_5)$. Set theoretically  this cubic 
coincides with  $\PP(V_5)\cap \cS$. Since $\cS$ is a sextic, this means that $\cS$ cuts 
$\PP (U_5)$ along a double  cubic hypersurface. The result follows.
\end{proof}

In order to ensure we really get $\PP^4$'s in a ruling of $\SU_C(3)$, 
there remains to check that our family of linear spaces covers the whole moduli space. To be precise, we expect to get codimension one families of the two rulings. Indeed recall that 
the base of the rulings is five dimensional, exactly as $\overline{\Sing}(\cS)$. 
But the latter 
is actually, by its very definition, a $\PP^1$-fibration over its image $\pi_{2,5}(\overline{\Sing(\cS)})$ through the projection $\pi_{2,5}:Fl(1,2,5,V_9) \to Fl(2,5,V_9)$, whose 
birational image in $G(5,V_9)$ is the basis of our family. 
A similar phenomenon will happen for 
Hecke lines, as we will see in the next section. We have therefore a four
dimensional family of $\PP^4$'s and we want to show that these cover $\PP(V_9)$. 

\begin{prop}
There are $18$ $\PP^4$'s of the family $\pi_{2,5}(\overline{\Sing(\cS)})$ passing through the general point of $\PP(V_9)$. In particular, the $\PP^4$'s parametrized by $\pi_{2,5}(\overline{\Sing(\cS)})$ cover $\PP(V_9)$.
\end{prop}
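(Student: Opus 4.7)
The plan is to reinterpret the count as a Chern number on the flag variety $Fl(2,5,V_9)$, and to evaluate it by Schubert calculus with computer assistance.

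Observe first that the condition $v\in U_2\wedge\wedge^2V_9+\wedge^2U_5\wedge V_9$ does not involve $U_1$, so $\pi_{2,5}(\overline{\Sing}(\cS))$ is cut out in $Fl(2,5,V_9)$ by the vanishing of the section induced by $v$ in the quotient bundle
\[
\cE \;:=\; \wedge^3V_9 \,\big/\,\bigl(\cU_2\wedge\wedge^2V_9+\wedge^2\cU_5\wedge V_9\bigr),
\]
where $\cU_2\subset\cU_5$ denote the tautological subbundles on $Fl(2,5,V_9)$. A direct count in a basis adapted to a flag $U_2\subset U_5$ gives $\rank\cE=84-(49+50-37)=22$, which matches the codimension of $\pi_{2,5}(\overline{\Sing}(\cS))$ inside the $26$-dimensional $Fl(2,5,V_9)$; so the zero locus has the expected dimension $4$ and, for $v$ general, is regularly embedded. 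To count the $\PP^4$'s of the family passing through a fixed general $x\in\PP(V_9)$, I would intersect with the codimension-$4$ Schubert subvariety
\[
X_x \;:=\; \{(U_2,U_5)\in Fl(2,5,V_9) : x\in U_5\},
\]
which is a $G(2,5)$-bundle over $G(4,V_9/\langle x\rangle)$ of dimension $22$. Since the expected dimension of $\pi_{2,5}(\overline{\Sing}(\cS))\cap X_x$ is zero, and the section of $\cE|_{X_x}$ remains transverse for $v$ general (by a Bertini argument applied to $\wedge^3V_9$ as space of global sections), the number of intersection points is
\[
\int_{X_x} c_{22}\bigl(\cE|_{X_x}\bigr).
\]

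The main obstacle is the evaluation of this top Chern number. The Chern classes of $\cE|_{X_x}$ are polynomials in those of the restrictions of $\cU_2$ and $\cU_5$, and the Chow ring of $X_x$ is fully determined by its iterated flag bundle structure; however the explicit expansion is lengthy and not illuminating by hand. I would therefore delegate it to the \texttt{Schubert2} package of \texttt{Macaulay2}, which was already used in the proof of Proposition~\ref{rulings-SingS}, and which should return the value $18$.

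The covering assertion is then immediate: the incidence variety $\{((U_2,U_5),x) : (U_2,U_5)\in\pi_{2,5}(\overline{\Sing}(\cS)),\; x\in\PP(U_5)\}$ is proper over $\PP(V_9)$, and its generic fiber has cardinality $18\neq 0$, so the second projection is surjective.
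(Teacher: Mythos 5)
Your proposal is correct and follows essentially the same route as the paper: both realize $\pi_{2,5}(\overline{\Sing}(\cS))$ as the zero locus of a general section of the rank-$22$ bundle $\wedge^3V_9/(\cU_2\wedge\wedge^2V_9+\wedge^2\cU_5\wedge V_9)$ on $Fl(2,5,V_9)$ and delegate the final enumeration to a Chern-class computation in \texttt{Macaulay2}. The only (cosmetic) difference is that the paper evaluates $\int_{\PP(\cU_5)}c_1(\cO_{\PP(\cU_5)}(1))^8$ over the projectivized bundle, i.e.\ the degree of the evaluation map to $\PP(V_9)$, while you compute the equivalent number $\int_{X_x}c_{22}(\cE|_{X_x})$ by intersecting with a Schubert cycle; these agree by push--pull.
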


\proof
This is a degree computation that we carried out with \cite{Macaulay2}. In order to do so, construct $\pi_{2,5}(\overline{\Sing(\cS)})$ inside $Fl(2,5,V_9)$ as the zero locus of a general section of the quotient of $\wedge^3 V_9$ by $\cU_2\wedge (\wedge^2 V_9) + \wedge^2 \cU_5 \wedge V_9$. Then consider the projective bundle $\PP(\cU_5)$ over $\pi_{2,5}(\overline{\Sing(\cS)})$ and its relative tautological bundle $\cO_{\PP(\cU_5)}(-1)$. We get $$\int_{\PP(\cU_5)}c_1(\cO_{\PP(\cU_5)}(1))^{8}=18$$
and the result follows.\qed

\begin{comment}
\begin{prop}
The union of all $\PP(U_5)$ inside $\PP(V_9)$, for $(U_1\subset U_2\subset U_5)$ 
in $ \overline{\Sing}(\cS)$, covers $\PP(V_9)$.
\end{prop}

\begin{proof}
Let $\cU_5$ be the tautological vector bundle over  $\overline{\Sing}(\cS)$, pulled-back from $G(5,V_9)$.  In order to prove that the natural morphism from the total space of 
$\PP(\cU_5)$ to $\PP(V_9)$ is dominant,  it is sufficient to check that  $c_1(\cO_{\PP(\cU_5)}(1))^{8}\ne 0$. Let $\cL$ be the pullback to $\PP(\cU_5)$ of the restriction  of the anticanonical bundle $\det(TFl(1,2,5,V_9))$ to $\overline{\Sing(\cS)}$. 
%In order to verify this one can check that $\int_{\PP(\cU_5)}c_1(\cL) c_1(\cO_{\PP(\cU_5)}(1))^{8}\neq 0$ for a certain line bundle $\cL$ on $\PP(\cU_5)$. Such a computation can be done w With the help of \cite{Macaulay2} we compute that  
$$\int_{\PP(\cU_5)}c_1(\cL) c_1(\cO_{\PP(\cU_5)}(1))^{8}=36\ne 0,$$
by constructing $\overline{\Sing(\cS)}$ as a zero locus inside $Fl(1,2,5,V_9)$, and $\PP(\cU_5)$ as a projective bundle over it.
This implies the claim.
\end{proof}
\end{comment}

\medskip
In fact we get the two rulings of $\SU_C(3)$, since by Remark  \ref{remark_two_rulings}
they are exchanged by the covering involution of $\SU_C(3)$. Indeed the two copies of 
$\PP^4$ constituting $\theta^{-1}(\PP(U_5))$, being exchanged by the involution, must 
belong to different rulings. 

\section{The odd moduli space as an ODL}

\subsection{Orbital degeneracy loci in $G(3,V_9)$}
Over the Grassmannian, the Borel-Weil theorem provides still another identification 
$$\wedge^3V_9\simeq H^0(G(3,V_9),\wedge^3\cQ),$$
where $\cQ$ now denotes the rank six tautological quotient bundle. 
This allows to consider 
the skew-symmetric cubic tensor $v$ as a family of skew-symmetric cubic tensors in six
variables. The $GL(V_6)$-orbit closures in $\wedge^3V_6$ are easy to describe: by increasing 
dimension (and forgetting the extremal ones) we have 
\begin{itemize}
\item the cone $Z_{10}$ over the Grassmannian $G(3,6)$,
\item the variety $Z_5$ of partially decomposable tensors, %of the form $e\wedge \omega$ for some vector $e$ and some two-form $\omega$, 
\item the tangent variety $Z_1$ to the Grassmannian, which is a quartic hypersurface.
 %   \item The points in the open orbit are those of the form $e_1\wedge e_2\wedge e_3+ 
  %  e_4\wedge e_5\wedge e_6$ for some basis $(e_1,\ldots , e_6)$ of $V_6$; this normal form is unique. 
\end{itemize}
Correspondingly, we get inside $G(3,V_9)$
\begin{itemize}
\item a hypersurface $D_{Z_1}(v)$ of the Grassmannian, which is a singular quadratic section,
\item its singular locus  $D_{Z_5}(v)$, whose singular locus is 
\item  $D_{Z_{10}}(v)$, a smooth eightfold since $D_{Sing (Z_{10})}(v)=\emptyset$
 for dimensional reasons.
\end{itemize}

\begin{remark}
The hypersurface $D_{Z_1}(v)$ of $G(3,V_9)$ has obvious similarities with the Coble quadric discovered in \cite{coblequadric}. As the latter, it is a special quadratic 
section of the Grassmannian. Nevertheless its relationship with the moduli space is less direct. The Coble quadric can be characterized as the unique quadric section 
of the Grassmannian $G(2,V_8)$ which is singular exactly on a copy of the 
odd moduli space of rank two vector bundles on a (non hyperelliptic) genus three curve. Here, it is the singular locus of the quadric section, for which we are not aware of any modular interpretation, which is itself singular along a copy of the 
odd moduli space of rank three vector bundles on the genus two curve (indeed, we will show in Theorem \ref{thm_main2} that $D_{Z_{10}}(v)$ identifies with such a moduli space). The quadric section is presumably uniquely determined by this property, but we did not prove this.
\end{remark}

%\dan{Je me demandais si on peut obtenir $D_{Z_{10}}(v)$ "directement" de $D_{Z_{1}}(v)$ comme le lieu où cette quadrique n'est pas factorielle.}
%\lau{Pourquoi n'est-elle pas factorielle?}

\subsection{A Fano eightfold}

By definition,  $D_{Z_{10}}(v)\subset G(3,V_9)$ is the projection from $Fl(3,6,V_9)$
of the locus 
\[
\tilde{D}_{Z_{10}}(v):=\{ (U_3\subset U_6)\in Fl(3,6,V_9),  \quad v\in U_3\wedge (\wedge^2V_9)
+ \wedge^3 U_6 \}. 
\]

\begin{prop}\label{Fano}
$\tilde{D}_{Z_{10}}(v)\simeq D_{Z_{10}}(v)$ is a smooth Fano eightfold of even index.
\end{prop}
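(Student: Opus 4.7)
My plan is to realize $\tilde{D}_{Z_{10}}(v)$ as the zero locus of a regular section of an explicit vector bundle on the flag variety $Fl(3,6,V_9)$, and then deduce smoothness, dimension and the canonical class by the usual Bertini and adjunction machinery, with one extra divisibility observation forcing the index to be even.

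First I would verify that the projection $\tilde D\to D:=D_{Z_{10}}(v)$ is an isomorphism. It is an isomorphism over $D\setminus D_{\{0\}}(v)$: a nonzero decomposable trivector $\bar v\in\wedge^3(V_9/U_3)$ uniquely determines its support $\cU_6/\cU_3$, scheme-theoretically cut out by the kernel of the contraction $w\mapsto w\wedge\bar v\in\wedge^4(V_9/\cU_3)$. The exceptional locus $D_{\{0\}}(v)\subset G(3,V_9)$ is cut out by a section of $\wedge^3\cQ$ of rank $20$, and since $\dim G(3,V_9)=18$ this locus is empty for generic $v$. As a byproduct, $D$ embeds in $G(3,V_9)$, a fact I will use at the end. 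Next, setting $\cR:=\cU_6/\cU_3$ on $Fl(3,6,V_9)$ (dimension $27$), I identify $\tilde D$ with the zero locus of the section of $\cV:=\wedge^3\cQ/\wedge^3\cR$ (rank $19$) induced by $v\in\wedge^3V_9=H^0(Fl,\wedge^3\cQ)$. Since $\cV$ is globally generated, Kleiman transversality (using the $SL(V_9)$-action on $Fl$) yields smoothness of $\tilde D$ for generic $v$ and the expected dimension $27-19=8$.

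For the canonical class I run adjunction $K_{\tilde D}=(K_{Fl}+c_1(\cV))|_{\tilde D}$. Writing $\cL_i:=-c_1(\cU_i)$, the standard tangent bundle decomposition of $Fl(3,6,V_9)$ gives $-K_{Fl}=6\cL_3+6\cL_6$, and $c_1(\wedge^3\cQ)=\binom{5}{2}c_1(\cQ)=10\cL_3$ together with $c_1(\wedge^3\cR)=c_1(\cR)=\cL_3-\cL_6$ gives $c_1(\cV)=9\cL_3+\cL_6$. Hence $-K_{\tilde D}=(-3\cL_3+5\cL_6)|_{\tilde D}$.

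The decisive step is the identity $\cL_6|_{\tilde D}=\cL_3|_{\tilde D}$. For this I would observe that over $D$ the section of $\wedge^3\cQ$ defined by $v$ factors through the line subbundle $\wedge^3\cR=\det\cR$ and is nowhere vanishing there: by definition of $D$, the trivector $\bar v$ lies in $\wedge^3(\cU_6/\cU_3)$ and is nonzero at every point. Hence $\det\cR|_{\tilde D}$ is trivial, i.e.\ $\cL_6|_{\tilde D}=\cL_3|_{\tilde D}$. Substituting yields $-K_{\tilde D}=2\,\cL_3|_{\tilde D}$, which is ample since $\tilde D\hookrightarrow G(3,V_9)$ via Plücker. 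Thus $\tilde D$ is Fano and the anticanonical class is divisible by $2$ in $\Pic(\tilde D)$, forcing the Fano index to be even. The main obstacle I anticipate is really just clean bookkeeping on $Fl(3,6,V_9)$; the conceptual point that makes the proof work is the triviality of $\det\cR$ over $D$, which identifies the two Plücker pullbacks and removes the negative coefficient that would otherwise spoil $-K_{\tilde D}$.
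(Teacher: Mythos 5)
Your proposal is correct and follows essentially the same route as the paper: your bundle $\cV=\wedge^3\cQ/\wedge^3\cR$ is literally the paper's $\cE = \wedge^3V_9/(\cU_3\wedge (\wedge^2V_9)+ \wedge^3 \cU_6)$ in different notation, and the decisive point — that $v$ induces a nowhere-vanishing section of $\det(\cU_6/\cU_3)$ on $\tilde{D}_{Z_{10}}(v)$, trivializing that line bundle and identifying $\det(\cU_3)$ with $\det(\cU_6)$ so that $-K = 2\,\cL_3$ — is exactly the paper's argument, as is the adjunction bookkeeping.

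The one place you genuinely diverge is the proof that $\tilde{D}_{Z_{10}}(v)\to D_{Z_{10}}(v)$ is an isomorphism. The paper shows the projection is injective (a second preimage would force $v\in U_3\wedge(\wedge^2V_9)$, impossible for general $v$) and then checks it is immersive via the normal bundle sequence and an explicit Lie-algebra computation at a point $(U_3\subset U_6)$. You instead build an inverse morphism by recovering $\cU_6/\cU_3$ as the kernel of the contraction $w\mapsto w\wedge\bar v$, using $D_{\{0\}}(v)=\emptyset$ (rank $20 > 18 = \dim G(3,V_9)$) to ensure the fibre of this kernel has constant dimension three along the degeneracy locus. This is the standard ``Kempf collapsing is an isomorphism over the open orbit'' argument and it buys you a slightly slicker, computation-free proof of the isomorphism; the only care required is that the kernel sheaf be locally free over $D_{Z_{10}}(v)$ with its natural scheme structure, which follows from the constancy of the fibre rank. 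Both arguments are sound.
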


\proof 
First observe that $\tilde{D}_{Z_{10}}(v)$ is defined as the zero locus 
of a general section 
of a globally generated vector bundle of rank $19$ on the flag manifold $Fl(3,6,V_9)$, namely $$\cE = \wedge^3V_9/(\cU_3\wedge (\wedge^2V_9)
+ \wedge^3 \cU_6).$$
We deduce that $\tilde{D}_{Z_{10}}(v)$ is smooth of dimension eight. Moreover its canonical bundle 
can be computed by the adjunction formula: since the canonical bundle of $Fl(3,6,V_9)$
is $\det(\cU_3)^6\otimes \det(\cU_6)^6$, we find that 
$$K_{\tilde{D}_{Z_{10}}(v)} = \det(\cU_3)^{-3}\otimes \det(\cU_6)^5_{|\tilde{D}_{Z_{10}}(v)}.$$
This does not seem to be negative, but observe that the quotient of  $\cU_3\wedge (\wedge^2V_9)+ \wedge^3 \cU_6$ by $\cU_3\wedge (\wedge^2 V_9)$ is just the line bundle $\wedge^3(\cU_6/\cU_3)$. 
Over $\tilde{D}_{Z_{10}}(v)$, our $v$ defines a section of this line bundle, and this section 
is nowhere vanishing for $v$ general. Indeed  $U_3\wedge (\wedge^2V_9)$ has codimension $30$ in $\wedge^3V_9$, and this is much bigger than the dimension of $G(3,V_9)$. We conclude that on $\tilde{D}_{Z_{10}}(v)$, the line bundle $\wedge^3(\cU_6/\cU_3)$ is trivial, which means that  $\det(\cU_3)\simeq \det(\cU_6)$. As a consequence, we can rewrite the canonical bundle as 
$$K_{\tilde{D}_{Z_{10}}(v)} = \det(\cU_3)\otimes \det(\cU_6)_{|\tilde{D}_{Z_{10}}(v)}\simeq 
\det(\cU_6)^2_{|\tilde{D}_{Z_{10}}(v)}.$$
Since $\det(\cU_3)\otimes \det(\cU_6)$ is dual to the minimal very ample line bundle 
on the flag manifold $Fl(3,6,V_9)$, the first identity shows that 
$\tilde{D}_{Z_{10}}(v)$ is 
Fano. The second identity implies  it has even index.  

We now  prove that  $\tilde{D}_{Z_{10}}(v)$ and $D_{Z_{10}}(v)$ are isomorphic. 
Let $\pi$ be the projection from $Fl(3,6,V_9)$ to $G(3,V_9)$. We denote 
in the same way its restriction from $\tilde{D}_{Z_{10}}(v)$ to $D_{Z_{10}}(v)$. Since 
$U_3\wedge (\wedge^2V_9)$ has codimension one inside $U_3\wedge (\wedge^2V_9)
+ \wedge^3U_6$, 
the fact that $[U_3]\in D_{Z_{10}}(v)$ has two pre-images in $\tilde{D}_{Z_{10}}(v)$ would imply that 
$v$ actually belongs to $U_3\wedge (\wedge^2V_9)$, and we already noticed this cannot be the case
for $v$ general. So $\pi :\tilde{D}_{Z_{10}}(v)\ra D_{Z_{10}}(v)$ is a bijection. 
There remains to check it is everywhere immersive.
For this we use the normal exact sequence 
$$0\lra T\tilde{D}_{Z_{10}}(v)\lra TFl(3,6,V_9)_{|\tilde{D}_{Z_{10}}(v)}\lra \cE_{|\tilde{D}_{Z_{10}}(v)}\lra 0.$$
Since the flag manifold is homogeneous, its tangent space at some point $(U_3\subset U_6)$ 
can be seen as a quotient of $\fsl(V_9)$, by the space of morphisms that preserve the 
flag. The projection to $G(3,V_9)$ induces a surjective 
map between tangent spaces, whose kernel, the vertical tangent space, is just the tangent space  $\Hom(U_3, U_6/U_3)$ to the fiber $G(3,U_6)$. This vertical tangent space should 
be seen as the quotient of the subspace of $\fsl(V_9)$ consisting of morphisms that 
preserve $U_3$. So the fact that $\pi$ is immersive at $(U_3\subset U_6)$ amounts to 
the following claim: if $X\in\fsl(V_9)$ preserves $U_3$ and sends $v$ to 
 $U_3\wedge (\wedge^2V_9)+ \wedge^3 U_6$, then it preserves the full flag 
 $(U_3\subset U_6)$. 
 
 In order to check this, note that we can decompose $v=v'+e_4\wedge e_5\wedge e_6$,
 where $v'$ belongs to $U_3\wedge (\wedge^2V_9)$ (which does not contain $v$, as we 
 already stressed), and $e_4, e_5, e_6$ are a basis of $U_6$ modulo $U_3$. Since $X$
 preserves $U_3$, $X(v')$ belongs to $U_3\wedge (\wedge^2V_9)$. On the other hand, 
 since $X(e_4\wedge e_5\wedge e_6)=X(e_4)\wedge e_5\wedge e_6+e_4\wedge X(e_5)\wedge e_6+e_4\wedge e_5\wedge X(e_6)$, it is clear that it belongs to $U_3\wedge (\wedge^2V_9)+ \wedge^3 U_6$ if and only if $X$ preserves $U_6$, and the proof is complete. \qed 
 
\subsection{A family of planes in the moduli space}

We want to prove that the projective planes in $\PP(V_9)$ parametrized by 
$D_{Z_{10}}(v)$ lift to planes in the moduli space $\SU_C(3)$. As we already noticed for the $\PP^4$'s of the rulings,
this happens if the sextic $\cS$ restricted to such a plane restricts to a 
double cubic.

\begin{prop}
\label{prop_hecke_in_coble2}
Let $[U_3]\in D_{Z_{10}}(v)$, then $\PP(U_3)\cap \cS $ is a double cubic, and 
$\theta^{-1}(\PP(U_3))$ is the union of two projective planes meeting along a cubic curve.
\end{prop}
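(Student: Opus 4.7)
The plan is to mirror the proof of Proposition~\ref{rulings-SingS}, replacing the flag $(U_1\subset U_2\subset U_5)$ and the ruling data by the pair $(U_3\subset U_6)$ coming from a point of $\tilde D_{Z_{10}}(v)$. By definition, $v$ lies in $\cA := U_3\wedge (\wedge^2V_9)+\wedge^3U_6$, and a point $[W_1]\in\PP(U_3)$ lies in $\cS\cap\PP(U_3)$ if and only if there exists a $6$-dimensional $W_6\supset W_1$ with $v\in\cB := W_1\wedge(\wedge^2V_9)+\wedge^2W_6\wedge V_9$. Inside the subvariety of $Fl(1,6,V_9)$ cut out by the condition $W_1\subset U_3$, I would stratify the pairs $(W_1\subset W_6)$ according to the integers $(a,b):=(\dim(W_6\cap U_3),\dim(W_6\cap U_6))$, where $1\le a\le 3$ and $a\le b\le 6$. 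On each stratum $S$, the condition $v\in\cB$ expresses the vanishing of a section of the quotient $\cP:=\cA/\cA\cap\cB$; as in \textit{loc.\,cit.}, $\cA$ is globally generated over $S$ and a Bertini-type argument bounds the dimension of the zero locus by $\dim(S)-\rank(\cP)$, replacing $\cP$ by a pulled-back quotient $\overline\cP$ whenever this bound is not sharp enough.

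The main obstacle, and by far the bulk of the work, is the case-by-case analysis of the fifteen admissible strata. For each of them I would pick an explicit basis of $V_9$ adapted to the flag $(W_1\subset W_6\cap U_3\subset W_6\cap U_6\subset U_3+(W_6\cap U_6)\subset U_6+W_6)$, compute the dimension of $\cA\cap\cB$ by a direct enumeration of trivectors (entirely parallel to the computation carried out in detail for the open stratum in \textit{loc.\,cit.}), and check that the projection of the zero locus to $\PP(U_3)\simeq\PP^2$ has dimension strictly less than one, with the exception of a single distinguished stratum. Extreme strata, such as $b=6$ (i.e.\ $W_6=U_6$) or $b=a$ (so that $W_6\cap U_6=W_6\cap U_3$), are eliminated immediately by crude codimension counts; the delicate cases are the intermediate ones, where as in \textit{loc.\,cit.}\ one must introduce an auxiliary quotient bundle $\overline\cP$ depending on fewer parameters. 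At the end of this analysis a unique stratum survives and its zero locus dominates a curve $D\subset\PP(U_3)\simeq\PP^2$.

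Once this surviving stratum is isolated, I would perform a \cite{Macaulay2} computation exactly in the spirit of the final step of the proof of Proposition~\ref{rulings-SingS}: construct the stratum as a flag bundle over $\PP(U_3)$, construct $\cP$ and a general section induced by $v$, and compute the $\cO_{\PP(U_3)}(1)$-degree of the projection of its zero locus. The expected answer is~$3$. Since $\cS\cap\PP(U_3)$ is set-theoretically supported on $D$ while $\cS$ is a sextic, this forces $\cS\cap\PP(U_3) = 2D$ as a divisor on $\PP(U_3)$. The statement on $\theta^{-1}(\PP(U_3))$ is then formal: locally $\theta$ is given by $z^2=f$ with $f$ a local equation of $\cS$, and $f|_{\PP(U_3)}=g^2$ with $g$ a cubic vanishing on $D$, so the equation factorises as $(z-g)(z+g)=0$, exhibiting $\theta^{-1}(\PP(U_3))$ as the union of two copies of $\PP(U_3)\simeq\PP^2$ glued along the cubic curve $D$.
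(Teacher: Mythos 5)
Your proposal is correct and follows essentially the same route as the paper: stratify the flags $(W_1\subset W_6)$ by the intersection dimensions with $(U_3\subset U_6)$, eliminate all strata but one by dimension counts on $\cA/\cA\cap\cB$ (passing to coarser quotients when needed), and then compute with \cite{Macaulay2} that the surviving locus projects to a cubic in $\PP(U_3)$, forcing $\cS\cap\PP(U_3)$ to be a double cubic and the double cover to split. The only cosmetic difference is the count of strata: linear algebra ($b\ge 3$ since $\dim(W_6\cap U_6)\ge 3$, and $b=5,6$ force $a\ge 2,3$ respectively) cuts your fifteen cases down to the nine the paper actually treats, with $(3,5)$ the surviving one.
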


\begin{proof}
Since $[U_3]\in D_{Z_{10}}(v)$, there exists $U_6\supset U_3$ such that $v\in U_3\wedge (\wedge^2 V_9) + \wedge^3 U_6$. We are looking for flags $(W_1\subset W_6)$ such that $W_1\subset U_3$ and $v\in W_1\wedge (\wedge^2 V_9) + (\wedge^2 W_6) \wedge V_9$. These flags correspond to points $[W_1]\in \PP(U_3)\cap \cS$. For a generic element $[W_1]$ in this intersection, %(which is a sextic curve in $\PP^2$), 
we want to compute the values of $(\dim(W_6\cap U_3),\dim(W_6\cap U_6))$. 
We will prove:
%This couple of integers can a priori be either $(3,6)$, $(3,5)$, $(3,4)$, $(3,3)$, $(2,5)$, $(2,4)$, $(2,3)$, $(1,4)$, $(1,3)$. 

\begin{lemma} 
$W_6\supset U_3$ and  $\dim(W_6\cap U_6)= 5.$
\end{lemma}

%Let us treat these cases separatedly. 
\proof For convenience let us denote $\cA= U_3\wedge (\wedge^2 V_9) + \wedge^3 U_6$ and $\cB=
W_1\wedge (\wedge^2 V_9) + (\wedge^2 W_6) \wedge V_9$. According to the relative position
of the flags $(U_3\subset U_6)$ and $(W_1\subset W_6)$, the intersection of $\cA$ and 
$\cB$ will have a specific dimension, and will be constrained by the fact that it has to
contain the general tensor $v$. Let us discuss these constraints more precisely, for 
each possible values of $(\dim(W_6\cap U_3),\dim(W_6\cap U_6))$; this couple of integers 
can a priori be either $(3,6)$, $(3,5)$, $(3,4)$, $(3,3)$, $(2,5)$, $(2,4)$, $(2,3)$, $(1,4)$, or $(1,3)$. We will discuss these cases roughly by increasing complexity, and exclude all of them except $(3,5)$. 

%\bigskip
%First observe that $\cA\cap \cB$ must not be contained inside $U_3\wedge (\wedge^2 V_9)$,
%since we have already observed that the general tensor $v$ is contained in no such space. 
%So the image of $\wedge^3U_6$ in $\cB$ modulo $U_3$ must be non zero; otherwise said, the
%line $\wedge^3(U_6/U_3)\subset \wedge^3(V_9/U_3)$ must be contained inside the image 
%of $(\wedge^2 W_6) \wedge V_9$ inside $\wedge^3(V_9/U_3)$. This is the can only when
%$$\dim(W_6\cap U_6)-\dim(W_6\cap U_3) \ge 2.$$

%For fixed $V_3 \subset V_6\subset V_9$ and $U_1\subset U_6 \subset V_9$ let us denote by $W$ the quotient $ V_3\wedge \wedge^2 V_9 + \wedge^3 V_6  / ((U_1\wedge \wedge^2 V_9 + \wedge^2 U_6 \wedge V_9)\cap ( V_3\wedge \wedge^2 V_9 + \wedge^3 V_6))$. By letting the flag $[U_1\subset U_6]$ vary, we obtain a sheaf $\cW$ whose fiber is isomorphic to $W$, and a section $\tilde{\omega}$ of $\cW$ induced by $\omega$. Then the locus of points $[U_1\subset U_6]$ such that $[U_1]\in \PP(V_3)\cap \cS$ is the locus where $\tilde{\omega}$ vanishes.

\begin{itemize}
\item[(3,3)] In this case 
 $\cA\cap\cB = W_1\wedge (\wedge^2 V_9) + U_3\wedge W_6\wedge V_9$, and we would deduce that 
 $v\in\cA\cap \cB\subset U_3\wedge (\wedge^2 V_9)$. But this is a 
contradiction, since we have already observed that the general tensor $v$ is contained 
in no such space.
\item[(2,3)] Here we compute that 
$$\cA\cap\cB = W_1\wedge (\wedge^2 V_9) + (U_3\cap W_6)\wedge W_6\wedge V_9+U_3\wedge (\wedge^2W_6),$$
So again  $\cA\cap\cB\subset U_3\wedge (\wedge^2 V_9)$ 
and we get the same contradiction as before. 
\item[(3,4)] Same argument. 
\item[(1,4)] In this case the parameter space for the flag $(U_1\subset U_6)$ has dimension $14$. Moreover
$$\cA\cap\cB = W_1\wedge (\wedge^2 V_9) +U_3\wedge (\wedge^2W_6)+\wedge^3U_6$$
has dimension $49$. So $v$ defines a general section of the vector bundle $\cA/\cA\cap\cB$, 
of rank $16$ over the $14$-dimensional parameter space, and will therefore vanish nowhere in general.
\item[(2,5)] Here the parameter space for the flag $(U_1\subset U_6)$ has dimension $9$, while 
$$\cA\cap\cB = W_1\wedge (\wedge^2 V_9) +(W_6\cap U_3)\wedge W_6\wedge V_9+U_3\wedge (\wedge^2W_6)+\wedge^3U_6$$
has dimension $53$. So the vector bundle $\cA/\cA\cap\cB$ has rank $12$ and we conclude as 
in the previous case.
  \item[(3,6)] In this case $U_6=V_6$ and the parameter space is a projective plane. Moreover $$\cA\cap\cB=W_1\wedge (\wedge^2 V_9) + U_3\wedge U_6\wedge V_9+\wedge^3U_6$$
   has dimension $59$. So the vector bundle $\cA/\cA\cap\cB$ has rank $6$ and we conclude as 
before.
\item[(1,3)] This is the biggest stratum, of dimension $17$. Here 
$$\cA\cap\cB = W_1\wedge (\wedge^2 V_9) +U_3\wedge (\wedge^2W_6)+\wedge^2(U_6\cap W_6)\wedge U_6$$
has dimension $49$, so the quotient $\cA/\cA\cap\cB$ has  rank $16$
which is smaller than the dimension of the parameter space. So we need to modify 
the argument a little bit. For this we observe that 
$$\cA\cap\cB\subset \cC:=W_1\wedge (\wedge^2 V_9) + \wedge^2(W_6+U_3)\wedge U_3+\wedge^3U_6,$$
which has dimension $54$ but only depends on $W_1$ and $W_8:=W_6+U_3$. And now the parameter
space for the flag $(W_1\subset U_3\subset W_8)$ is only $7$-dimensional, while the bundle 
$\cA/\cC$ has rank $11$. So the previous argument applies. 
\item[(2,4)] This case is somewhat similar to the previous case. Indeed the parameter space for the flag $(U_1\subset U_6)$ has dimension $13$, and we compute that 
$$\cA\cap\cB=W_1\wedge (\wedge^2 V_9) + (W_6\cap U_3)\wedge W_6\wedge V_9+ U_3\wedge (\wedge^2W_6)+\wedge^2(W_6\cap U_6)\wedge U_6$$
has dimension $53$, so the quotient $\cA/\cA\cap\cB$ has  rank $12$
which is smaller than the dimension of the parameter space. But we observe that $$\cA\cap\cB\subset \cD:=(W_6\cap U_3)\wedge (\wedge^2 V_9) +  \wedge^2(W_6+U_6)\wedge U_3+\wedge^3U_6,$$
which has dimension $60$ but only depends on $W_2:=W_6\cap U_3$ and $W_8:=W_6+U_6$. And now the parameter
space for the flag $(W_2\subset U_3\subset U_6\subset W_8)$ is only $4$-dimensional, while the bundle 
$\cA/\cD$ has rank $5$. So the previous argument applies. 
\end{itemize}

\begin{comment}
\begin{table}[h]
    \[\begin{array}{cccl}
          \cS & \dim\cS & \rank\cP_S  \\ 
      (1,3)& 17 & 17 & \lau{ok voir ci-dessus} \\
       (1,4)& 14 & 16 & \lau{ok}   \\
       (2,3)& 15 & 13 &\lau{ok voir ci-dessus?}\mbox{\vlad{pourquoi '?'}}\\
       (2,4)& 13 & 12 & \lau{pb?}\\
       (2,5)& 9 &  12& \lau{ok} \\
       (3,3)& 11 &  7&\lau{ok voir ci-dessus}\\
       (3,4)& 10&  7&\lau{ok voir ci-dessus}\\
       (3,5)& 7& 6 &\lau{ok} \\
       (3,6)& 2& 6  &\lau{ok} 
    \end{array}
\]
\end{table}

\lau{Je propose de remplacer cette discussion par une petite table qui donne 
simplement la dimension de chaque strate et le nombre de conditions, ie le rang 
du fibré sur la strate + peut-être traiter un exemple en détails + donnr un 
argument spécifique sur la strate ouverte.}\lau{Ci-dessus la table que j'ai calculée... ça fait beaucoup de cas problématiques.}\vlad{je sais que ce n'est pas optimale, mais je suis pour le fait d'utiliser la table seulement pour les cas non-problématiques et on traite en détail tous les autres cas}
\end{comment}
So we are only left with the case $(\dim(U_6\cap V_3),\dim(U_6\cap V_6))=(3,5)$,
which concludes the proof of the Lemma. \qed 

\medskip 
In order to conclude the proof of the Proposition, we proceed as follows. 
The parameter space for the flags $W_1\subset U_3\subset W_6$ with 
$\dim(U_6\cap W_6)\ge 5$ has dimension equal to $7$. Over this space we have 
$$\cA\cap\cB=W_1\wedge (\wedge^2 V_9) + U_3\wedge W_6\wedge V_9+ \wedge^3 U_6,$$
which has constant dimension $59$, even when $W_6$ specializes to $U_6$. So
$\cA/\cA\cap\cB$ defines a  rank six vector bundle $\cW$ on the parameter space.
%BTW: je voudrais bien voir le caclul de la dimension de $A\cap B$ ci dessus}

Then with the help of \cite{Macaulay2} we construct the parameter space and the vector 
bundle $\cW$, and we check that the degree of $\cO_{\PP(V_3)}(1)$ on the zero locus of a section of $\cW$ inside the parameter space is equal to three. So this zero locus projects to a cubic curve $E$ in $\PP(V_3)$, and set-theoretically this 
cubic coincides with $\PP(V_3)\cap \cS$. But $\cS$ is a sextic, so it has to cut $E$ 
doubly in $\PP(V_3)$.  The result follows.
\end{proof}

So we get a family of planes in $\SU_C(3)$. Let us denote by $\cF$ the family of lines contained in these planes. We need  to check that the  lines from $\cF$  cover the 
whole moduli space. This follows from the next statement:

\begin{prop}\label{Dcovers}
The family of planes parametrized by $D_{Z_{10}}(v)$ covers $\PP(V_9)$.
\end{prop}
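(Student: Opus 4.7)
The plan follows the intersection-theoretic approach used for the rulings in the previous Proposition. Consider the $\PP^{2}$-bundle $\pi\colon\PP(\cU_{3})\to D_{Z_{10}}(v)$, where $\cU_{3}$ is the restriction from $G(3,V_{9})$ of the tautological rank-three subbundle, together with the morphism $q\colon\PP(\cU_{3})\to\PP(V_{9})$ induced by the inclusion $\cO_{\PP(\cU_{3})}(-1)\hookrightarrow\pi^{*}\cU_{3}\hookrightarrow V_{9}\otimes\cO$. The image of $q$ is precisely the union of the planes $\PP(U_{3})$ for $[U_3]\in D_{Z_{10}}(v)$. Since $\dim\PP(\cU_{3})=8+2=10$ and $\dim\PP(V_{9})=8$, proving the proposition amounts to proving that $q$ is dominant, which by properness of $\PP(\cU_{3})$ forces surjectivity.

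For this I would verify that the intersection number
$$
\int_{\PP(\cU_{3})}\;c_{1}\!\bigl(\cO_{\PP(\cU_{3})}(1)\bigr)^{8}\cdot L^{2}
$$
is strictly positive for some ample class $L$ on $\PP(\cU_{3})$. Indeed, since $\cO_{\PP(\cU_{3})}(1)=q^{*}\cO_{\PP(V_{9})}(1)$, the class $c_{1}(\cO_{\PP(\cU_{3})}(1))^{8}$ represents $q^{*}[\mathrm{pt}]$; by the projection formula the integral above equals the $L$-degree of a generic fibre of $q$, so positivity is equivalent to the non-emptiness of a generic fibre, hence to dominance of $q$.

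The computation is concretely carried out in \cite{Macaulay2}, exactly as in the preceding Proposition. One uses the description of $\tilde D_{Z_{10}}(v)\simeq D_{Z_{10}}(v)$ from Proposition \ref{Fano} as the zero locus in $Fl(3,6,V_{9})$ of a general section of the rank-$19$ bundle $\cE=\wedge^{3}V_{9}/(\cU_{3}\wedge\wedge^{2}V_{9}+\wedge^{3}\cU_{6})$, so that its class in the Chow ring of the flag manifold equals $c_{19}(\cE)$; one then builds the projective bundle $\PP(\cU_{3})$ on top of this zero locus and takes $L$ to be the pullback of a sufficiently positive combination of the two very ample generators of $\mathrm{Pic}(Fl(3,6,V_{9}))$, translating the integral into an elementary Chern-class computation in $Fl(3,6,V_{9})$.

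The only obstacle is computational: one must confirm that the Macaulay2 calculation yields a positive integer. There is no theoretical difficulty, because positivity is predicted by the geometry already in place, namely that by Proposition \ref{prop_hecke_in_coble2} each plane of our family lifts to two Hecke planes in $\SU_{C}(3)$, and the Hecke-plane families $\cP_{H},\cP_{H^{*}}$ listed in Proposition \ref{recap} cover the moduli space $\SU_{C}(3)$ and hence, via the theta map, cover $\PP(V_{9})$.
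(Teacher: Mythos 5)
Your proposal is correct and takes essentially the same route as the paper: both reduce surjectivity to dominance of the evaluation map $q\colon\PP(\cU_3)\to\PP(V_9)$ and certify it by a Macaulay2 Chern-class computation on $\PP(\cU_3)$ built over $D_{Z_{10}}(v)\subset Fl(3,6,V_9)$, the paper's exact choice being $\int_{\PP(\cU_3)}c_1(\cL)^2\,c_1(\cO_{\PP(\cU_3)}(1))^{8}=18$ with $\cL$ the pullback of $\cO_{G(3,V_9)}(1)$. (Only your closing heuristic is slightly circular, since the planes are shown to be Hecke planes only \emph{after} this proposition; but you do not rely on it.)
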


\begin{proof}
It is sufficient to check that the eighth Chern class of the bundle $(\cO_{\PP(V_9)}(1))^{\oplus 8}$ over the projective bundle $p:\PP(\cU_3) \to D_{Z_{10}}(v)$ is different from zero. 
This can be done with the help of \cite{Macaulay2}. Let $\cL$ be the pullback to $\PP(\cU_3)$ of the restriction  of the bundle $\cO_{G(3,V_9)}(1)$ to $D_{Z_{10}}(v)$. By constructing $D_{Z_{10}}(v)$ as a zero locus inside $Fl(3,6,V_9)$, and $\PP(\cU_3)$ as a projective bundle over it, we checked that
$$\int_{\PP(\cU_3)}c_1(\cL)^2 c_1(\cO_{\PP(\cU_3)}(1))^{8}=18,$$
which implies the claim. Similarly, in order to check this non-vanishing, we could have used the fact that $\int_{\PP(\cU_3)}c_2(\cM) c_1(\cO_{\PP(\cU_3)}(1))^{8}=6\ne 0,$ where $\cM$ is defined as $\cM:=(p^*\cU_3/\cO_{\PP(\cU_3)}(-1))^\vee$, which can again be checked with \cite{Macaulay2}.
\end{proof}

\begin{remark}
The 18 appearing in the formula here above has a nice interpretation. Recall that $\PP(\cU_3)$ has dimension 10, hence - given a general point $p\in\PP(V_9)$ - there is a surface inside $D_{Z_{10}}(v)$ that parametrizes planes through $p$. Moreover each point $p$ defines two points $[E_1]$, $[E_2]=\tau^*[E_1]$ in $\SU_C(3)$. It turns out that the surface is a disjoint union of two $\PP^2$'s embedded via their anticanonical class since the degree $18$ is twice the degree of each $\PP^2$ inside $D_{Z_{10}}(v)\subset G(3,V_9)$ in the Plücker embedding. In view of Theorem \ref{thm_main2}, these two planes correspond to the two Hecke planes inside $\SU_C(3,\cO_C(c))$ defined as extensions of $E_1$ by $\cO_c$ and $E_2$ by $\cO_c$.

%\mic{est-ce que la surface est forcement un $\PP^2$? si oui, il est plongé avec son anticanonique, cf cor 4.9} \vlad{oui. En fait, avec macaulay on montre que: le canonique de cette surface est la restriction du O(1), donc elle est Fano; $chi(\wedge^i cotangentBundle)=2$ pour tout i=0,1,2, et comme elle est fano, il y a 2 composantes connexes; le degré anticanonique est 18, donc 9 pour chaque composante, donc c'est bien un P2}
\end{remark}

We will also need the following technical result. 

\begin{lemma}\label{pasSing}
The general line in $\cF$ does not meet $\Sing(\cS)$.
\end{lemma}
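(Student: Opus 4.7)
The plan is a direct dimension count. The singular locus $\Sing(\cS)$ has dimension $5$ in $\PP(V_9)$ (codimension $3$), whereas the planes $\PP(U_3)$ parametrized by $D_{Z_{10}}(v)$ are only $2$-dimensional. Transversality therefore predicts that a generic plane of our family meets $\Sing(\cS)$ in the empty set; any line inside such a plane will then automatically avoid $\Sing(\cS)$.

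To implement this, I would introduce the incidence
\[
I \;:=\; \{([U_3],x)\in D_{Z_{10}}(v)\times \Sing(\cS) \,:\, x\in\PP(U_3)\}
\]
and show that $\dim I<8=\dim D_{Z_{10}}(v)$, so that the projection $I\to D_{Z_{10}}(v)$ is not surjective. Projecting $I$ instead to $\Sing(\cS)$, the fiber over $x$ is the fiber of the incidence projection $\rho:\PP(\cU_3)\to\PP(V_9)$, where $\cU_3$ is the tautological bundle on $D_{Z_{10}}(v)$. By Proposition~\ref{Dcovers} and the remark following it, this fiber is a disjoint union of two $\PP^2$'s, hence of dimension $2$, for $x$ general in $\PP(V_9)$; by upper semicontinuity, the same bound $2$ holds on a dense open subset of $\Sing(\cS)$, provided $\Sing(\cS)$ is not entirely contained in the proper closed jump locus of $\rho$. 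This mild non-containment can be confirmed by exhibiting a single point of $\Sing(\cS)$ at which $\rho$ has a $2$-dimensional fiber, a computation that fits naturally within the \cite{Macaulay2} toolkit already used in Propositions~\ref{rulings-SingS}, \ref{prop_hecke_in_coble2} and~\ref{Dcovers}. Granting this, $\dim I\le 5+2=7<8$, as required.

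From here the conclusion is immediate: a general $[U_3]\in D_{Z_{10}}(v)$ satisfies $\PP(U_3)\cap\Sing(\cS)=\emptyset$, so every line contained in $\PP(U_3)$, and hence every line contained in either of the two lifts $\theta^{-1}(\PP(U_3))$ in $\SU_C(3)$, avoids $\Sing(\cS)$ (equivalently, avoids $\Sing(\SU_C(3))=\theta^{-1}(\Sing(\cS))$, since the double cover $\theta$ ramifies along $\cS$). Because $\cF$ is parametrized, up to the two sheets of $\theta$, by the $\PP^2$-bundle $G(2,\cU_3)$ over $D_{Z_{10}}(v)$, the statement for a general line in $\cF$ follows. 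The main potential obstacle is the control of the fiber dimension of $\rho$ along $\Sing(\cS)$: should these fibers happen to be uniformly larger than the generic value $2$, the naive bound would fail. A safe alternative is an intersection-theoretic computation on $\PP(\cU_3)$ entirely analogous to the one carried out in Proposition~\ref{Dcovers}, showing directly that the class $[\rho^{-1}(\Sing(\cS))]$ pushes forward to a proper subscheme of $D_{Z_{10}}(v)$.
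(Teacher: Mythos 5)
Your overall strategy---bounding the dimension of the incidence variety $I\subset D_{Z_{10}}(v)\times\Sing(\cS)$---is exactly the shape of the paper's argument, and your reduction (a general plane meeting $\Sing(\cS)$ in a small set forces a general line in $\cF$ to miss it) is fine; in fact the paper only needs, and only proves, that the general plane meets $\Sing(\cS)$ in a \emph{finite} set, which already suffices. But the step where you conclude $\dim I\le 5+2=7$ has a genuine gap. Upper semicontinuity plus a single sample point only controls the fibers of $\rho:\PP(\cU_3)\to\PP(V_9)$ over a dense open subset of $\Sing(\cS)$. The components of $I$ lying entirely over the jump locus $W\subsetneq\Sing(\cS)$ are not bounded by $5+2$: a priori $\rho$ could have, say, $5$-dimensional fibers over a $3$-dimensional sublocus of $\Sing(\cS)$, producing an $8$-dimensional component of $I$ that dominates $D_{Z_{10}}(v)$. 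Your fallback via the class $[\rho^{-1}(\Sing(\cS))]$ does not repair this either: an intersection-theoretic class computed in expected dimension says nothing about excess-dimensional components of the actual preimage, and it is precisely those that must be excluded.

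This is exactly what the paper's proof spends all its effort on. It encodes a point of $I$ as a pair of flags $(U_1\subset U_3\subset U_6)$ and $(U_1\subset U_2\subset U_5)$ with $v\in\cC\cap\cD$, where $\cC=U_3\wedge(\wedge^2V_9)+\wedge^3U_6$ and $\cD=U_2\wedge(\wedge^2V_9)+(\wedge^2U_5)\wedge V_9$, stratifies the flag parameter space into $23$ strata by the four intersection dimensions $\dim(U_2\cap U_3)$, $\dim(U_2\cap U_6)$, $\dim(U_5\cap U_3)$, $\dim(U_5\cap U_6)$, eliminates $9$ of them because there $\cC\cap\cD\subset U_3\wedge(\wedge^2V_9)$, and on each of the remaining $14$ strata $S$ checks $\dim S+\rank_S(\cC\cap\cD)\le 92$, so that the locus of compatible flags has dimension at most $8$ over \emph{every} stratum and hence projects to $D_{Z_{10}}(v)$ with finite general fibers. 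To complete your proof you would need an equivalent stratum-by-stratum control of the fibers of $\rho$ along all of $\Sing(\cS)$; the generic-fiber computation you propose is necessary but not sufficient.
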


\proof It is enough to prove that the general plane in $D_{Z_{10}}(v)$ meets  $\Sing(\cS)$
at most in a finite set. In order to prove this, we proceed as follows. A plane in $D_{Z_{10}}(v)$ corresponds to a flag $(U_3\subset U_6)$, a point in $\Sing(\cS)$
(or rather $\overline{\Sing}(\cS)$) to a flag $(U_1\subset U_2\subset U_5)$. The point 
is contained in the plane if $U_1\subset U_3$ and we need that $v\in\cC\cap \cD$ 
where 
$$\cC=U_3\wedge (\wedge^2V_9)+\wedge^3U_6, \qquad \cD = U_2\wedge (\wedge^2V_9)+
(\wedge^2U_5)\wedge V_9.$$
We will stratify the parameter space by the relative position of the two flags
$(U_1\subset U_3\subset U_6)$ and $(U_1\subset U_2\subset U_5)$. There are $23$ 
possibilities. We can eliminate $9$ of them right away by observing that if $U_2\cap U_3=U_2\cap U_6$, and $U_5\cap U_3$ has codimension at most one in $U_5\cap U_6$, then 
$\cC\cap \cD\subset U_3\wedge (\wedge^2V_9)$, which cannot happen in general. 

For the $14$ remaining strata $S$, we will compute the dimension of $S$ and the 
rank of $\cC\cap \cD$ on $S$. If $\dim S+\rank_S(\cC\cap \cD)\le 92$, then the set of flags
compatible with the general $v$ will have dimension at most $92-84=8$, and will project
to $D_{Z_{10}}(v)$ with finite (possibly empty) general fibers.

The data we computed are summarized in the following table, where the intersection
dimensions in the first four columns specify each stratum $S$. 
\begin{small}
$$\begin{array}{|c|c|c|c||c||c|}
\hline 
  U_2\cap U_3 & U_2\cap U_6 & U_5\cap U_3 & U_5\cap U_6 & \dim S &\rank_S(\cC\cap \cD) \\
 \hline 
  \hline 
 1 &1 &1 &3 & 46& 46 \\
  \hline 
 1&2 &1 &2 & 45& 46\\
  \hline 
 1&2 &1 &3 & 44&46 \\
  \hline 
 1& 1& 1& 4& 42 &46 \\
  \hline 
 1&2 & 1&4 & 41& 46\\
  \hline 
 1&2 &2 &3 & 42 &50 \\
  \hline 
 1& 1& 2& 4& 41& 50\\
  \hline 
 1& 2& 2& 4& 40&50 \\
  \hline 
% 1&2 &2 &2 & 38& 50 \\
 1& 2& 2& 5& 36& 50 \\
  \hline 
 2& 2& 2& 4& 38& 53 \\
  \hline 
 1& 2& 3& 4& 37&53 \\
  \hline 
 1&2 &3 &5 & 34& 53 \\
  \hline 
 2& 2& 2& 5& 33& 53\\
  \hline 
 2& 2& 3& 5& 32 &59 \\
 \hline
 \end{array}$$
\end{small}

\medskip
The sum of the two numbers in the righmost columns does never exceed $92$, and 
this implies our claim.  \qed

\begin{prop}\label{DHecke}
The family of lines in the planes parametrized by $D_{Z_{10}}(v)$ is a codimension 
one family of Hecke lines.
\end{prop}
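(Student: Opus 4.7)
The plan is to combine the classification of covering line families in Proposition~\ref{recap} with the covering property of Proposition~\ref{Dcovers} to identify $\cF$ as a subfamily of $\cF_H$, and then compute the codimension from $\dim D_{Z_{10}}(v)$.

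The first step is to assemble the covering family. Proposition~\ref{prop_hecke_in_coble2} lifts every $\PP(U_3)$ with $[U_3] \in D_{Z_{10}}(v)$ through the theta map to two planes in $\SU_C(3)$ exchanged by the covering involution $\tau$, and Proposition~\ref{Dcovers} guarantees these planes cover $\SU_C(3)$. Since $D_{Z_{10}}(v)$ is an irreducible smooth Fano manifold (Proposition~\ref{Fano}), the resulting family $\cF$ of lines is, up to $\tau$, irreducible and covers the moduli space.

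By Proposition~\ref{recap}, any covering family of lines sits in $\cF_H$, $\cF_{R_1}$ or $\cF_{R_2}$. Since both $\cF_H$ and $\cF_{R_1} \cup \cF_{R_2}$ are $\tau$-invariant, it suffices to rule out the ruling case, i.e.\ that a generic plane $\PP(U_3)$ from $D_{Z_{10}}(v)$ lies inside some $\PP(U_5)$ of the ruling family $\pi_{2,5}(\overline{\Sing}(\cS))$ of Proposition~\ref{rulings-SingS}. The incidence variety of such pairs $(U_3, U_5)$ is a $G(3,5)$-bundle over the $4$-dimensional ruling base, hence has dimension at most $10$ when projected to $G(3,V_9)$; a Chern class computation implemented in \cite{Macaulay2}, in the spirit of those appearing in Propositions~\ref{Dcovers} and~\ref{rulings-SingS}, shows that this projection meets the $8$-dimensional $D_{Z_{10}}(v)$ in a proper subvariety. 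Hence a generic $[U_3]\in D_{Z_{10}}(v)$ is not contained in any ruling $\PP(U_5)$, forcing $\cF \subset \cF_H$.

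Finally, each plane parametrised by $D_{Z_{10}}(v)$ contains a dual $\PP^2$ of lines, so
\[
\dim \cF = \dim D_{Z_{10}}(v) + 2 = 10 = \dim \cF_H - 1,
\]
which yields the codimension one claim. The main obstacle is the second step: excluding that our planes are planes of a ruling. A purely qualitative argument via Lemma~\ref{pasSing} does not close the gap, because a generic plane inside a ruling $\PP^4$ need not meet $\Sing(\cS)$; the most reliable route is therefore the \textsc{Macaulay2}-assisted Chern class verification sketched above, performed on the flag $Fl(3,5,V_9)$ with the two constraints $[U_3]\in D_{Z_{10}}(v)$ and $[U_5]\in \pi_{2,5}(\overline{\Sing}(\cS))$ simultaneously imposed.
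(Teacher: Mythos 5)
Your overall strategy --- reduce to the classification of Proposition \ref{recap} and then exclude the ruling case --- is the right one, and your first and last steps (the covering property via Proposition \ref{Dcovers}, and the count $\dim\cF=8+2=10=\dim\cF_H-1$) agree with the paper. The gap is in the exclusion step. You reduce it to showing that a generic $\PP(U_3)$, $[U_3]\in D_{Z_{10}}(v)$, is not contained in any $\PP(U_5)$ of the ruling family, and you propose to settle this by a Chern class computation on the incidence variety in $Fl(3,5,V_9)$. But an intersection-number computation cannot certify that the ($\le 10$-dimensional) image of that incidence variety meets the $8$-dimensional $D_{Z_{10}}(v)$ in a \emph{proper} subvariety: a nonzero number only certifies nonemptiness when the intersection is already known to have the expected dimension, and a vanishing number certifies nothing about the actual set-theoretic locus. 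Since both loci are cut out by the \emph{same} tensor $v$, they are not in general position, and genericity of $v$ alone does not give transversality; one would have to redo a stratified Bertini-type analysis (in the style of Lemma \ref{pasSing}) over the relative positions of the flags $(U_3\subset U_6)$ and $(U_1\subset U_2\subset U_5)$ with the extra containment $U_3\subset U_5$ imposed --- a computation you have not set up. As written, the key step is asserted rather than proved.

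The paper closes this step with a softer argument that needs no computation: if $\cF$ were contained in $\cF_{R_1}$, then --- using Lemma \ref{pasSing} so that the general line of $\cF$ avoids $\Sing(\cS)$, and Lemma \ref{unique-ruling} so that each such line lies in a unique member of the ruling, giving a well-defined lift to $\hat{\cF}_{R_1}$ --- the preimage $\hat{\cF}$ would be rationally connected, being dominated by a $\PP^2$-bundle over the Fano manifold $D_{Z_{10}}(v)$. A rationally connected variety admits no nonconstant map to the abelian surface $\Pic^1(C)$, so $\hat{\cF}$ would lie in a single fiber of $\hat{\cF}_{R_1}\to\Pic^1(C)$; but those fibers have dimension $9$, whereas $\hat{\cF}$ has dimension $10$. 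This is a contradiction. Note that this is precisely where Lemma \ref{pasSing} enters the paper's proof: not as a direct exclusion of ruling planes (your remark that a generic plane inside a ruling $\PP^4$ need not meet $\Sing(\cS)$ is correct but beside the point), but as the hypothesis needed to invoke Lemma \ref{unique-ruling}. I would encourage you to replace your second step by an argument of this kind.
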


\proof The family $\cF$ of lines covers $\SU_C(3)$ by the previous statement, 
so it must consist of Hecke lines  or lines contained in the rulings. Let $\overline{\cF}$
denote its image in $G(2,V_9)$, which is the image of a projective plane bundle over $D_{Z_{10}}(v)$. Since $D_{Z_{10}}(v)$ is Fano, it is rationally
connected, so $\overline{\cF}$ is rationally connected as well. If it was contained
in $\overline{\cF}_R$, we could consider its preimage inside $\cF_{R_1}$ and then
$\hat{\cF}_{R_1}$. By Lemmas \ref{unique-ruling} and \ref{pasSing}, the preimage 
$\hat{\cF}$  would still be 
rationally connected, and would therefore be contained in a fiber of the fibration of 
$\hat{\cF}_{R_1}$ to the abelian surface $\Pic^1(C)$. But these fibers have codimension 
two, while $\hat{\cF}$ is a divisor: indeed $\overline{\cF}$ is a projective plane bundle over $D_{Z_{10}}(v)$, so it has dimension $10$,  hence codimension one in $\cF_H$. 
This is a contradiction.\qed
%Moreover  $\tilde{\cF}$ is birational to $\overline{\cF}$ by Lemma \ref{unique-ruling}. \qed 

%But recall
%that the latter families $\cF_{R_1}$ and $\cF_{R_2}$ admit dominant rational maps 
%to the abelian surface $\Pic^1(C)$. Since $D_{Z_{10}}(v)$ is Fano, it is rationally
%connected. So $\cF$ is rationally connected as well, being a projective plane bundle 
%over $D_{Z_{10}}(v)$.  But then the restriction to $\cF$ of the rational map to 
%$\Pic^1(C)$ would be constant. Since $\cF$ has dimension $10$, which is bigger than
%the dimension of any  \lau{est-ce clair? il pourrait y avoir des diviseurs exceptionnels??}\vlad{the fiber over any point $E_2$ in the abelian surface is given by the data of a rank two semistable vector bundle $E_1$ with determinant equal to $E_2^\vee$ (always dimension $3=\dim(\SU_C(2))$) and a line in $\PP(\Ext^1(E_2,E_1)))\cong \PP^4$, so it seems to me that the dimension is constant equal to $9$)}\lau{Je me demandais si ce n'était pas seulement le point générique que l'on décrivait comme ça, à vérifier}\vlad{comme on disait, on peut prendre la preimage de $D_{Z_{10}}(v)$ dans la fibration donnée par le ruling. Mais comment s'assurer que cette preimage est rationellement connexe? Par exemple, ça serait vrai si elle était birationnelle à $D_{Z_{10}}(v)$, mais celui-ci est à priori seulement un diviseur dans l'espace des droites du ruling...} fiber, we get a contradiction. \qed 

\subsection{The odd moduli space} We finally prove our main result.
\begin{theorem}
\label{thm_main2}
For generic $v\in \wedge^3 V_9$, the orbital degeneracy locus $D_{Z_{10}}(v)$ is isomorphic to the moduli space $\SU_C(3,\cO_C(c))$ of semistable rank three vector bundles on $C$ with fixed determinant $\cO_C(c)$, for a certain point $c\in C$.
\end{theorem}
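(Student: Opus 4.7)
The plan is to construct an isomorphism $\phi\colon D_{Z_{10}}(v)\to\SU_C(3,\cO_C(c))$ for some $c\in C$ by sending $[U_3]$ to the unique stable bundle whose good Hecke plane is an irreducible component of $\theta^{-1}(\PP(U_3))$, and then to upgrade this map to an isomorphism.

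First I would produce a morphism $\phi\colon D_{Z_{10}}(v)\to\cP_H$. By Proposition~\ref{prop_hecke_in_coble2}, $\theta^{-1}(\PP(U_3))=\Pi_1\cup\Pi_2$ is a union of two $\PP^2$'s meeting along a cubic, and by Proposition~\ref{DHecke} every line in either $\Pi_i$ is a Hecke line, so both $\Pi_i$ are Hecke planes. The covering involution $\tau$ fixes $\PP(U_3)$, hence swaps $\Pi_1$ and $\Pi_2$; by the Proposition following Lemma~\ref{linespres} it also exchanges the families $\cP_H$ and $\cP_{H^*}$, so exactly one of $\Pi_1,\Pi_2$ lies in $\cP_H$. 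To promote this pointwise assignment into a morphism of varieties, I would consider the incidence variety $\cI$ of pairs $([U_3],\Pi)$ with $\Pi$ an irreducible component of $\theta^{-1}(\PP(U_3))$; it splits naturally as $\cI_H\sqcup\cI_{H^*}$ according to whether $\Pi\in\cP_H$ or $\Pi\in\cP_{H^*}$, each piece projects isomorphically onto $D_{Z_{10}}(v)$, and one reads off the desired morphism $\phi\colon D_{Z_{10}}(v)\to\cP_H$.

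Next I would show that $\phi$ factors through a single fiber of the natural morphism $\cP_H\to C$ sending $\Pi(E,p)\mapsto p$ (well-defined since $p$ is uniquely recovered from the Hecke plane, as noted in the discussion preceding Lemma~\ref{unique}). Composing with $\phi$ gives a morphism from the Fano variety $D_{Z_{10}}(v)$ (Proposition~\ref{Fano}) to the genus two curve $C$, which must be constant by rational connectedness; hence $\phi$ factors as $\phi\colon D_{Z_{10}}(v)\to \SU_C(3,\cO_C(c))$ for some $c\in C$. For injectivity, if $\phi([U_3])=\phi([U'_3])=[E]$, then $\Pi(E,c)$ is a common component of $\theta^{-1}(\PP(U_3))$ and $\theta^{-1}(\PP(U'_3))$; each such component is a $\PP^2$ mapping isomorphically via $\theta$ onto $\PP(U_3)$ or $\PP(U'_3)$, so $\PP(U_3)=\theta(\Pi(E,c))=\PP(U'_3)$ and $U_3=U'_3$. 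Both source and target being smooth irreducible projective eightfolds, the image of $\phi$ is closed of full dimension and equals $\SU_C(3,\cO_C(c))$, and Zariski's Main Theorem then upgrades the bijective morphism $\phi$ to an isomorphism.

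The hardest step will be the first one: globally and algebraically separating the two Hecke planes in $\theta^{-1}(\PP(U_3))$ into the families $\cP_H$ and $\cP_{H^*}$ as $[U_3]$ varies in $D_{Z_{10}}(v)$. This rests on $\cP_H$ and $\cP_{H^*}$ being disjoint components of the space of Hecke planes in $\SU_C(3)$, together with the precise action of $\tau$ on Hecke planes described in Section~2. Once that is in place, the subsequent steps---constancy of the composed map to $C$, injectivity of $\phi$, and the conclusion via Zariski's Main Theorem---are standard.
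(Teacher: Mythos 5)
Your proposal is correct and follows essentially the same route as the paper: both use Propositions \ref{prop_hecke_in_coble2}, \ref{DHecke}, \ref{Dcovers} and \ref{recap} together with the fact that $\tau$ exchanges the two planes over $\PP(U_3)$ as well as the two families $\cP_H$ and $\cP_{H^*}$, so as to assign to each point of $D_{Z_{10}}(v)$ a plane in $\cP_H$, and then rational connectedness of the Fano eightfold to force the induced map to $C$ to be constant, landing in a single fiber $\SU_C(3,\cO_C(c))$. The only (immaterial) difference is the final step: you run the correspondence from $D_{Z_{10}}(v)$ to the moduli space and invoke injectivity plus Zariski's Main Theorem, whereas the paper runs it the other way and concludes that the resulting birational morphism is an isomorphism because $\SU_C(3,\cO_C(c))$ is smooth with cyclic Picard group.
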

\begin{proof}
By Proposition \ref{prop_hecke_in_coble2}, each point of $D_{Z_{10}}(v)$ defines a pair of planes 
in $\SU_C(3)$, exchanged by the cover involution. By Proposition \ref{DHecke} these planes are Hecke planes, that cover the whole moduli space by Proposition \ref{Dcovers}. 
According to Proposition \ref{recap}, there are only two such covering families 
of Hecke planes, and they are exchanged by the cover involution. This implies that 
each point of $D_{Z_{10}}(v)$ defines a plane in each of the two families $\cP_H$ and $\cP_{H^*}$. We can therefore lift $D_{Z_{10}}(v)$, at least birationally, to a hypersurface in either $\cP_H$ and $\cP_{H^*}$. But recall from Proposition \ref{recap}
that these two families fiber over $C$. Since $D_{Z_{10}}(v)$
is Fano, hence rationally connected, its pre-image in $\cP_H$ is also rationally 
connected, hence contained in a fiber of the projection to $C$, over some point $c$.
Recall that this fiber is a birational image of $\SU_C(3,\cO_C(c)))$, and therefore
the latter maps birationally to $D_{Z_{10}}(v)$. Since  $\SU_C(3,\cO_C(c)))$ is smooth and
its Picard group is cyclic, this birational morphism has to be an isomorphism. 
\end{proof}

We know that the moduli space $\SU_C(3,\cO_C(c))$ has index two, so comparing 
with Proposition \ref{Fano} we see that its canonical bundle must be the restriction
of $\det(\cU_3)^2$ on the Grassmannian, or also $\det(\cU_3)\otimes \det(\cU_6)$  
on the flag manifold $Fl(3,6,V_9)$. We deduce:

\begin{coro}
$\tilde{D}_{Z_{10}}(v)$ is projectively equivalent to $\SU_C(3)$ in its anticanonical
embedding. 
\end{coro}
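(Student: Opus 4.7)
The plan is to combine Theorem \ref{thm_main2} with Proposition \ref{Fano} in order to identify the line bundle defining the inherited projective embedding of $\tilde{D}_{Z_{10}}(v)$, and then to check that this embedding is complete, so that it agrees up to a linear automorphism with the anticanonical embedding.

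By Theorem \ref{thm_main2} and Proposition \ref{Fano}, $\tilde D_{Z_{10}}(v)\simeq \SU_C(3,\cO_C(c))$, embedded in $Fl(3,6,V_9)$. The minimal very ample line bundle on this flag manifold is $\cL := (\det \cU_3)^{-1}\otimes (\det \cU_6)^{-1}$. From the proof of Proposition \ref{Fano}, the line bundle $\wedge^3(\cU_6/\cU_3)$ is trivial on $\tilde D_{Z_{10}}(v)$, so $\det(\cU_3)|_{\tilde D}\simeq \det(\cU_6)|_{\tilde D}$, and combined with the adjunction formula this gives
\[
\cL|_{\tilde D_{Z_{10}}(v)}\simeq (\det\cU_6)^{-2}|_{\tilde D_{Z_{10}}(v)}\simeq -K_{\tilde D_{Z_{10}}(v)}.
\]
Hence the projective embedding inherited from $Fl(3,6,V_9)\hookrightarrow \PP(H^0(Fl,\cL)^\vee)$ uses a sublinear system of $|{-}K_{\tilde D_{Z_{10}}(v)}|$.

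It remains to check that the restriction map
\[
\rho\colon H^0(Fl(3,6,V_9),\cL)\lra H^0(\tilde D_{Z_{10}}(v),\, -K)
\]
is surjective, which would promote this sublinear system to the complete anticanonical system and yield projective equivalence. I would proceed by a dimension count: compute $h^0(Fl,\cL)$ via Borel--Weil (it is an explicit irreducible $\SL_9$-module), compute $h^0(-K)$ on the odd moduli space via the Verlinde formula for rank $3$, genus $2$, degree $1$ at level $2$, and verify that the two numbers agree. The main obstacle is precisely this surjectivity; an alternative and probably cleaner route is to invoke the minimal free resolution of the ideal sheaf $\cI_{\tilde D}\subset \cO_{Fl}$ that is constructed in the sequel, from which the vanishing $H^1(Fl, \cI_{\tilde D}\otimes \cL)=0$ can be read off directly, giving $\rho$ surjective.
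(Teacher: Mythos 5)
Your first two paragraphs are precisely the paper's argument: the paper's entire proof of this corollary is the sentence preceding it, which combines Proposition \ref{Fano} (giving $K_{\tilde{D}_{Z_{10}}(v)}=\det(\cU_3)\otimes\det(\cU_6)|_{\tilde{D}_{Z_{10}}(v)}$) with the fact that the odd moduli space has index two and cyclic Picard group, and observes that $\det(\cU_3)\otimes\det(\cU_6)$ is dual to the minimal very ample line bundle on $Fl(3,6,V_9)$. So up to the identification $\cL|_{\tilde{D}_{Z_{10}}(v)}\simeq -K$ you and the paper coincide. Your third paragraph raises a point the paper passes over in silence: a literal projective equivalence with the \emph{complete} anticanonical embedding also requires surjectivity of the restriction map $\rho$. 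That concern is legitimate.

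However, neither of your two suggestions for closing that step works as written. The dimension count is numerically impossible: by Borel--Weil, $h^0(Fl(3,6,V_9),\cL)$ is the dimension of the irreducible $\SL(V_9)$-module of highest weight $\omega_3+\omega_6$, namely $5760$, while the Hilbert polynomial recorded in the Corollary below gives $h^0(-K)=\chi(\cO_{D_{Z_{10}}(v)}(2))=1710$. So $\rho$ has a large kernel, the two numbers cannot agree, and even if they did you would additionally need injectivity of $\rho$ to conclude. As for the cohomological route: the resolution constructed in the sequel is of $\cI_{D_{Z_{10}}(v)}$ inside $\cO_{G(3,V_9)}$, not of $\cI_{\tilde{D}_{Z_{10}}(v)}$ inside $\cO_{Fl(3,6,V_9)}$, and $H^0(Fl(3,6,V_9),\cL)$ is a different $\SL(V_9)$-module from $H^0(G(3,V_9),\cO(2))$; the two spaces of sections become comparable only after restriction to $\tilde{D}_{Z_{10}}(v)$, where $\det\cU_3\simeq\det\cU_6$. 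The statement that actually settles completeness is the $2$-normality $h^1(\cI_{D_{Z_{10}}(v)}(2))=0$ deduced from that resolution: it says that $|-K|=|\cO_{D_{Z_{10}}(v)}(2)|$ is cut out by the quadrics of the Plücker space, so the Plücker embedding followed by the quadratic Veronese realizes the complete anticanonical embedding. If one insists on the flag-variety polarization $\cL$ itself, the analogous vanishing on $Fl(3,6,V_9)$ is a separate, though similar, verification.
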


\subsection{Resolving the structure sheaf} 
An immediate consequence of our approach is that we can provide a
resolution of the structure sheaf of $\SU_C(3,\cO_C(c))$ inside $G(3,V_9)$. 
Indeed, by construction $D_{Z_{10}}(v)$ is modeled on the cone $Z_{10}$ over
$G(3,V_6)\subset \PP(\wedge^3V_6)$. The minimal resolution of its ideal 
$I_{Z_{10}}$ has been computed in 
\cite{pw86}. Letting $S=\CC[\wedge^3V_6]$, this minimal resolution is reproduced in Figure \ref{fig_res_idZ}.

\begin{figure}[t]
\small
$$\xymatrix@-2.5ex{
  0 & \\ I_{Z_{10}}\ar[u] &  \\  S_{21111}V_6^\vee \otimes S(-2) \ar[u] & \\
  (S_{321111}V_6^\vee\oplus S_{22221}V_6^\vee)\otimes S(-3)\ar[u] & \\ 
 S_{332211}V_6^\vee\otimes S(-4)\ar[u]  &(S_{522222}V_6^\vee\oplus S_{33333}V_6^\vee)\otimes S(-5)\ar[lu]\\
 & (S_{533322}V_6^\vee\oplus S_{444222}V_6^\vee\oplus S_{443331}V_6^\vee) \otimes S(-6) \ar[lu]\ar[u]\\
 & (S_{544332}V_6^\vee\oplus S_{544332}V_6^\vee)\otimes S(-7)\ar[u]  \\
 & (S_{554442}V_6^\vee\oplus S_{644433}V_6^\vee\oplus S_{555333}V_6^\vee) 
 \otimes S(-8)\ar[u] \\
 S_{665544}V_6^\vee\otimes S(-10) \ar[ur]
& (S_{555552}V_6^\vee\oplus S_{744444}V_6^\vee) \otimes S(-9)\ar[u] \\
  (S_{666654}V_6^\vee\oplus S_{765555}V_6^\vee)\otimes S(-11)\ar[u]\ar[ur] & \\
 S_{766665}V_6^\vee\otimes S(-12) \ar[u]&\\
 \det(V_6^\vee)^7\otimes S(-14) \ar[u]&\\
0\ar[u]& 
}$$
\normalsize
\caption{Minimal resolution of $I_{Z_{10}}$}
\label{fig_res_idZ}
\end{figure}
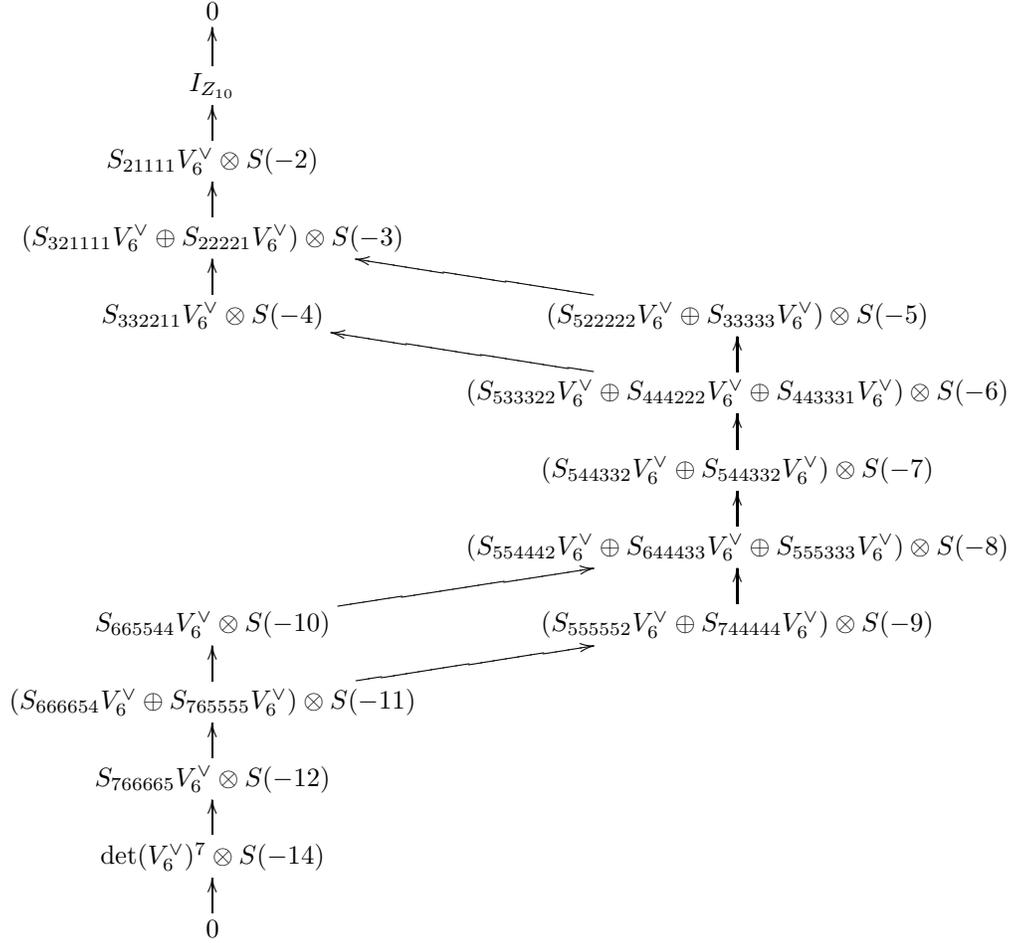

Since  this is an equivariant resolution, we can relativize it to the total space of 
$\wedge^3Q$ over $G(3,V_9)$. This yields the resolution of the ideal sheaf of $D_{Z_{10}}(v)$ in Figure \ref{fig_res_idD}.
\begin{figure}[h]
\small
$$\xymatrix@-2.5ex{
  0&  \\  I_{D_{Z_{10}}(v)}\ar[u] &  \\  S_{21111}\cQ^\vee  \ar[u] & \\
  S_{21}\cQ^\vee(-1)\oplus S_{22221}\cQ^\vee(1)\ar[u] & \\ 
 S_{2211}\cQ^\vee(-1)\ar[u]  &S_{3}\cQ^\vee(-2)\oplus S_{33333}\cQ^\vee\ar[lu]\\
 & S_{3111}\cQ^\vee(-2)\oplus S_{222}\cQ^\vee(-2)\oplus S_{33222}\cQ^\vee (-1) \ar[lu]\ar[u]\\
 & S_{32211}\cQ^\vee(-2)\oplus S_{32211}\cQ^\vee(-2)\ar[u]  \\
 & S_{33222}\cQ^\vee(-2)\oplus S_{3111}\cQ^\vee(-3)\oplus S_{222}\cQ^\vee(-3) 
 \ar[u] \\
 S_{2211}\cQ^\vee(-4) \ar[ur]
& S_{33333}\cQ^\vee(-2)\oplus S_{3}\cQ^\vee (-4)\ar[u] \\
  S_{22221}\cQ^\vee(-4)\oplus S_{21}\cQ^\vee(-5)\ar[u]\ar[ur] & \\
 S_{21111}\cQ^\vee(-5) \ar[u]&\\
 \cO(-7) \ar[u]&\\
0\ar[u]&
}$$
\normalsize
\caption{Minimal resolution of the ideal of $D_{Z_{10}}(v)$}
\label{fig_res_idD}
\end{figure}

\begin{remark}
Notice the remarkable fact that this corresponds  to a Cohen-Macaulay (length equal to the codimension) and even Gorenstein (last term of rank one, which implies that the resolution is self-dual) locally free resolution of $\cO_{D_{Z_{10}}(v)}$. In general, if $\cF_\bullet$ is a  Gorenstein $\cO_X$-locally free resolution of $\cO_Z$, with $Z\subset X$, then the canonical bundle $K_Z$ of $Z$ is given by the formula:
$$ K_Z = (K_X \otimes F_{-\codim_X(Z)}^\vee)|_{Z}.$$
In our situation, we deduce that $K_{D_{Z_{10}}(v)}$ is the restriction of $K_{G(3,V_9)}\otimes \cO(7)=\cO(-2)$, which confirms what we proved in Proposition \ref{Fano}.
\end{remark}

%\lau{
%D'alleurs le polynôme de Hilbert de $D$ est-il connu? on pourrait en principe le calculer. Je lis dans le survey de Minhea Popa Generalized theta linear series %https://people.math.harvard.edu/~mpopa/papers/theta_survey.pdf 
%que c'est connu mais que l'expression est très technique... Bon pas sûr que le jeu vaille la chandelle, d'ailleurs on ne l'a pas fait en genre 3.}

We can derive the following facts from the previous resolution. 

\begin{coro}
The Hilbert polynomial of $({D_{Z_{10}}(v)},\cO_{D_{Z_{10}}(v)}(1))$ is
$$\chi(\cO_{D_{Z_{10}}(v)}(m-1))=
\frac{477}{2240}\,m^{8}+\frac{63}{160}\,m^{6}+\frac{99}{320}\,m^{4}+
\frac{47}{560}\,m^{2}.$$
\end{coro}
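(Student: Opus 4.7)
The plan is to read the Hilbert polynomial directly off the minimal locally free resolution of Figure \ref{fig_res_idD}. Combining that resolution of $I_{D_{Z_{10}}(v)}$ with the tautological exact sequence $0\to I_{D_{Z_{10}}(v)}\to \cO_{G(3,V_9)}\to \cO_{D_{Z_{10}}(v)}\to 0$ yields a length $14$ locally free resolution of $\cO_{D_{Z_{10}}(v)}$ by direct sums of bundles of the form $S_{\lambda}\cQ^{\vee}\otimes \cO(a)$. Twisting by $\cO(m-1)$ and taking alternating sums of Euler characteristics gives
\[
\chi(\cO_{D_{Z_{10}}(v)}(m-1))=\chi(\cO_{G(3,V_9)}(m-1))-\sum_{i\ge 1}(-1)^{i}\chi(\cF_i(m-1)),
\]
so the computation reduces to evaluating $\chi(G(3,V_9),S_{\lambda}\cQ^{\vee}(a))$ for each irreducible summand appearing in Figure \ref{fig_res_idD}.

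Next I would evaluate each of these terms via Borel--Weil--Bott on $G(3,V_9)$. Since $\cO(1)=\det\cQ=\det\cU^{\vee}$, every term $S_{\lambda}\cQ^{\vee}\otimes \cO(a)$ is the equivariant bundle attached to the weight $(a,a,a\,|\,\lambda_1+a,\dots,\lambda_6+a)$, and its Euler characteristic is, up to a sign coming from the Weyl group action, either zero or the dimension of the corresponding irreducible $\GL_9$-representation, given by the Weyl dimension formula. Because the Weyl dimension is a polynomial in the shifts induced by $m$, each $\chi(\cF_i(m-1))$ is a polynomial of degree at most $\dim G(3,V_9)=18$ in $m$; after the alternating sum all terms of degree $>8$ must cancel, leaving a polynomial of degree $\dim D_{Z_{10}}(v)=8$.

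Carrying out the sum is mainly bookkeeping: there are $15$ Schur summands (plus the initial $\cO_{G(3,V_9)}$), for each of which one plugs a partition into Borel--Weil--Bott, tracks the cohomological parity, and records the resulting polynomial in $m$. The expected main obstacle is not conceptual but the sheer size of this calculation, for which I would rely on \cite{Macaulay2} (or \textsc{Sage}/\textsc{LiE}) using the built-in Schur functor and Weyl character routines on $G(3,V_9)$. As a reassurance that the answer is correct I would perform two independent cross-checks: first, the leading coefficient $477/2240$ should equal $\deg(D_{Z_{10}}(v))/8!$, where the Plücker degree of $D_{Z_{10}}(v)$ can be recomputed by intersection theory on $Fl(3,6,V_9)$ exactly as in the proof of Proposition \ref{Dcovers}; second, specializing the polynomial at $m=0$ must give $\chi(\cO_{D_{Z_{10}}(v)}(-1))=0$, consistent with Kodaira vanishing on the Fano manifold $D_{Z_{10}}(v)$ of index two.
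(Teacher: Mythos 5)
Your approach coincides with the paper's: the corollary is obtained there precisely by twisting the locally free resolution of Figure \ref{fig_res_idD} (equivalently, the induced resolution of $\cO_{D_{Z_{10}}(v)}$ as an $\cO_{G(3,V_9)}$-module) by $\cO(m-1)$ and taking the alternating sum of Euler characteristics of the equivariant Schur summands, each evaluated by Borel--Weil--Bott / the Weyl dimension formula in a computer algebra system, and your consistency checks (leading coefficient against the Pl\"ucker degree, $\chi(\cO(-1))=0$, to which one can add the evenness in $m$ forced by Serre duality with $K=\cO(-2)$) are exactly the right ones. Two small slips to fix before executing: the resolution of $\cO_{D_{Z_{10}}(v)}$ has length $10=\codim_{G(3,V_9)}D_{Z_{10}}(v)$, not $14$, and the alternating sum should read $\chi(\cO_{D_{Z_{10}}(v)}(m-1))=\chi(\cO_{G(3,V_9)}(m-1))+\sum_{i\ge 1}(-1)^{i}\chi(\cF_i(m-1))$ when $\cF_i$ denotes the $i$-th term of that resolution.
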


%\proof
%This is obtained from the explicit $\cO_G$-locally free resolution of $\cO_S$.
%\qed
In full generality, the Hilbert polynomial of the moduli space of vector bundles of fixed rank and determinant on a curve of given genus is given by the Verlinde formula, but in a form which is far from being obviously polynomial, see e.g.  \cite[Proposition 3.1]{beauville-verlinde}. 

\begin{prop}
${D_{Z_{10}}(v)}$ has the following properties.
\begin{enumerate}
%\item $D$ has degree $8586$.
\item It is not linearly normal, but $k$-normal for any $k\ge 2$.
\item It is contained in $810$ quadratic sections of the Grassmannian.
\item It is scheme theoretically cut-out by quadrics. 
\end{enumerate}\end{prop}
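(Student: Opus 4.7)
The strategy is to exploit the locally free resolution of $\cO_{D_{Z_{10}}(v)}$ displayed in Figure \ref{fig_res_idD} (obtained from the Gorenstein resolution of $I_{Z_{10}}$ of Figure \ref{fig_res_idZ}), combined with Bott's theorem on $G(3,V_9)$. Once we twist this resolution by $\cO(k)$, we obtain a complex of Schur bundles whose hypercohomology computes $H^\bullet(G(3,V_9),\cO_{D_{Z_{10}}(v)}(k))$; comparison with $H^\bullet(G(3,V_9),\cO(k))$ then controls the three claims.

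First I would settle the generation of the ideal. The first term of the minimal resolution of $I_{D_{Z_{10}}(v)}$ is $\cE_1:=S_{21111}\cQ^\vee$, and by Borel--Weil $H^0(G(3,V_9),S_{21111}\cQ^\vee)=S_{21111}V_9^\vee$ is an irreducible $\GL(V_9)$-module. Since $S_{21111}\cQ^\vee$ is a quotient of $\cQ^\vee\otimes \wedge^5\cQ^\vee$, hence of $\cO(2)^{\oplus N}$ for some $N$, and since the generators of the graded ideal are already supplied by $\cE_1$, the inclusion of sheaves $\cE_1\hookrightarrow I_{D_{Z_{10}}(v)}$ is surjective; this gives the scheme-theoretic cut-out by quadrics of item (3), provided one verifies that the quadrics coming from $H^0(\cE_1)$ are already global sections of $\cI(2)$ on the Grassmannian (which is automatic because $\cE_1(2)$ is globally generated and the twist is by $\cO(2)$).

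Next I would carry out the cohomology bookkeeping. For each Schur bundle $S_\mu\cQ^\vee(t)$ appearing in Figure \ref{fig_res_idD} and for $k=1,2$, Bott's theorem on $G(3,V_9)$ determines the unique non-vanishing cohomology group (and its $\GL(V_9)$-module structure) by reordering $(\mu,t+k,t+k,t+k)$ into a strictly decreasing sequence via a Weyl-group action, if possible. Collecting these computations into the hypercohomology spectral sequence of the resolution, one gets $h^i(G(3,V_9),I_{D_{Z_{10}}(v)}(k))$ for $i=0,1$ and $k=1,2$. From the comparison exact sequence
\[
0\to H^0(I(k))\to H^0(\cO_G(k))\to H^0(\cO_{D_{Z_{10}}(v)}(k))\to H^1(I(k))\to H^1(\cO_G(k))=0,
\]
and using that $H^0(\cO_G(1))=\wedge^3V_9^\vee$ has dimension $84$ while $H^0(\cO_G(2))=S_{(2,2,2)}V_9^\vee$, this will yield items (1) and (2). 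Concretely, for (1) one should find a single non-vanishing contribution to $H^1(I(1))$ coming from one of the higher Schur terms, witnessing the failure of linear normality; for $k\ge 2$ all such spurious contributions vanish by Bott (every relevant weight becomes regular and non-dominant), proving $k$-normality. For (2), the only term contributing to $H^0(I(2))$ is $S_{21111}\cQ^\vee(2)$, whose space of global sections, by Bott applied to the weight $(2,2,2,3,2,2,2,2,1)$ (after rearrangement), is an irreducible $\GL(V_9)$-module whose dimension I expect to equal $810$.

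The main obstacle is the combinatorial bookkeeping: each of the roughly twenty Schur bundles in Figure \ref{fig_res_idD}, twisted by $\cO(k)$, must be checked against Bott's theorem and the resulting modules assembled in the spectral sequence. The delicate point is to ensure that, for $k\ge 2$, all higher-differential cancellations in the spectral sequence actually happen; self-duality of the Gorenstein resolution, together with Serre duality on $G(3,V_9)$ and the triviality of $\cO_{D_{Z_{10}}(v)}$ coefficients against $K_{G(3,V_9)}\otimes \cO(7)$, should force these cancellations. Once this accounting is done, items (1)--(3) follow directly, and the dimension $810$ in (2) should be recognisable as the dimension of the irreducible $\GL(V_9)$-module of highest weight obtained from Bott applied to $S_{21111}\cQ^\vee\otimes\cO(2)$.
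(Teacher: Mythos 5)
Your overall strategy---twist the locally free resolution of Figure \ref{fig_res_idD} by $\cO(k)$, run Bott's theorem term by term, and assemble the result through the hypercohomology spectral sequence---is exactly the approach the paper takes. Items (1) and (3) are handled the same way there: the failure of linear normality is traced to the single summand $S_{3}\cQ^\vee(-2)$, for which $h^3(S_3\cQ^\vee(-1))=1$, producing a one-dimensional $H^1(I_{D_{Z_{10}}(v)}(1))$ and accounting for $h^0(\cO_{D_{Z_{10}}(v)}(1))=85>84=\dim\wedge^3V_9^\vee$; Bott gives $h^1(I_{D_{Z_{10}}(v)}(k))=0$ for $k\ge 2$; and (3) follows because the first term $S_{21111}\cQ^\vee(2)\cong S_{21111}\cQ$ is the irreducible bundle of a dominant weight, hence globally generated, and it surjects onto $I_{D_{Z_{10}}(v)}(2)$.

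Two concrete corrections, though. In (3), your intermediate claim that $S_{21111}\cQ^\vee$ is a quotient of $\cO(2)^{\oplus N}$ is false: that would say $S_{21111}\cQ^\vee(-2)$ is globally generated, but its weight is not dominant. What you actually need (and what is true) is global generation of the \emph{twist} $S_{21111}\cQ^\vee(2)\cong S_{21111}\cQ$. More seriously, your prediction for (2) is wrong: by Bott, $H^0(G(3,V_9),S_{21111}\cQ^\vee(2))$ is the irreducible module $S_{21111}V_9^\vee$ (up to a determinant character), of dimension $1050$, not $810$; and it is not the only contribution to $H^0(I_{D_{Z_{10}}(v)}(2))$. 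The summand of the second syzygy term coming from $S_{22221}V_6^\vee\otimes S(-3)$ in Figure \ref{fig_res_idZ} acquires, after twisting by $\cO(2)$, a space of sections of dimension $240=\dim S_{21}V_9$, which cancels against the $1050$ to give $810=1050-240$. So $810$ is not the dimension of a single irreducible $\GL(V_9)$-module supplied by the first term, and your bookkeeping, if carried out, must account for this cancellation. The cleanest route, once $2$-normality from item (1) is in hand, is simply $h^0(I_{D_{Z_{10}}(v)}(2))=h^0(\cO_{G}(2))-h^0(\cO_{D_{Z_{10}}(v)}(2))=2520-1710=810$, the second number being read off from the Hilbert polynomial (itself a consequence of the resolution), with higher cohomology of $\cO_{D_{Z_{10}}(v)}(2)=-K_{D_{Z_{10}}(v)}$ killed by Kodaira vanishing.
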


\proof 
The Hilbert polynomial  (recall that ${D_{Z_{10}}(v)}$ is Fano) gives 
%, $\chi(\cO_{D_{Z_{10}}(v)}(1))=85$. One can check by using Bott's Theorem that in fact
$h^0(\cO_{D_{Z_{10}}(v)}(1))=85$, which is bigger by one than the dimension of 
 $\wedge^3V_9$. The difference comes from the term $S^3\cQ^\vee(-2)$ in the resolution 
 of  $I_{D_{Z_{10}}(v)}$: by Bott's theorem $h^3(S^3\cQ^\vee(-1))=1$. From the same 
 theorem we readily reduce that $h^1(I_{D_{Z_{10}}(v)}(k))=0$ for any $k\ge 2$, which proves the first assertion. The second one is just a computation. The last one follows from the fact that $S_{21111}\cQ^\vee(2)=S_{21111}\cQ$ is generated by global sections,
 hence also $I_{D_{Z_{10}}(v)}(2)$. \qed

 \medskip
 It would be nice to find an explanation of this failure of linear normality.

\subsection{Application to $K3$ surfaces of genus $19$} 
Consider again the embedding 
$$D_{Z_{10}}(v)\simeq \SU_C(3, \cO_C(c)) \subset G(3,V_9).$$ 
Since the moduli space
is a Fano eightfold of index two, we will obtain a surface $S$ with trivial canonical 
bundle
by considering the zero locus of two general sections of the dual tautological bundle (or equivalently, the intersection of $D_{Z_{10}}(v)$ with a general translate of $G(3,7)$ inside $G(3,V_9)$). 

\begin{prop}\label{k3}
The surface $S$ is a K3 surface of genus $19$. 
\end{prop}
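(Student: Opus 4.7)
The plan is to check, in sequence: (i) $K_S\simeq \cO_S$; (ii) $S$ is smooth and of expected dimension; (iii) $h^1(\cO_S)=0$, so $S$ is K3 rather than abelian; (iv) the Plücker degree equals $36 = 2\cdot 19 - 2$.

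For (i), set $D := D_{Z_{10}}(v)$ and $E := \cU_3^\vee \oplus \cU_3^\vee$, so that $S = Z(\sigma)$ for a generic $\sigma \in H^0(D, E|_D)$. By Proposition~\ref{Fano} we have $\det \cU_3|_D \simeq \det \cU_6|_D$ and $K_D = \det(\cU_3)^2|_D$, so adjunction gives
$$K_S = (K_D \otimes \det E)|_S = \bigl(\det(\cU_3)^2 \otimes \det(\cU_3^\vee)^2\bigr)|_S = \cO_S.$$
For (ii), since $\cU_3^\vee$ is globally generated on $G(3,V_9)$ it is also globally generated on $D$, so Bertini applies to the two chosen sections and $S$ is smooth of dimension~$8 - 2\cdot 3 = 2$.

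For (iii), my approach is to use the Koszul resolution
$$0 \to \wedge^6 E^\vee \to \wedge^5 E^\vee \to \cdots \to E^\vee \to \cO_D \to \cO_S \to 0$$
on $D$ together with the resolution of $\cO_D$ in $G(3,V_9)$ provided by Figure~\ref{fig_res_idD}. Since $\wedge^i E^\vee$ decomposes into Schur functors $S_\lambda \cU_3$ and each term in the resolution of $\cO_D$ is a Schur functor $S_\mu \cQ^\vee$ twisted by a power of $\cO(1)$, the whole computation reduces, via a standard double-complex argument, to Bott's theorem on $G(3,V_9)$. Since $D$ is Fano, $H^{\ge 1}(\cO_D) = 0$, and the required $h^1(\cO_S) = 0$ follows from the vanishing of a finite list of groups $H^{i+1}(D, \wedge^i E^\vee)$, which is a mechanical check. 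A more conceptual alternative, if available, would be to apply a Sommese-type Lefschetz theorem for zero loci of globally generated vector bundles, which would directly transfer $H^1(D,\QQ)=0$ to $S$; this hinges on verifying sufficient positivity of $\cU_3^\vee|_D$ (which is globally generated but not ample on $G(3,V_9)$).

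For (iv), by the push--pull formula
$$\deg_H S = \int_D c_1(\cU_3^\vee)^2 \cdot c_6(E) = \int_D c_1(\cU_3^\vee)^2 \cdot c_3(\cU_3^\vee)^2.$$
Using the realization of $D$ as the image of the zero locus of a general section of the rank-$19$ bundle $\cE$ on $Fl(3,6,V_9)$ from Proposition~\ref{Fano}, this integral reduces to an intersection number on the flag manifold, accessible by Schubert calculus and readily evaluated with \cite{Macaulay2}, with expected output $36$. Together with $K_S = \cO_S$ and $h^1(\cO_S)=0$, a smooth surface of Plücker degree $36$ in its polarized embedding is a K3 surface of genus~$19$. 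The step I expect to require the most care is (iii): either the Koszul--Bott bookkeeping must be carried out explicitly (and is tedious enough that a computer-algebra check is prudent), or the positivity assumptions of a Sommese-type theorem must be verified against the actual geometry of $\cU_3^\vee|_D$.
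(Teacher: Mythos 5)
Your proposal follows essentially the same route as the paper's proof: triviality of $K_S$ by adjunction from Proposition~\ref{Fano}, then the Koszul complex of the section of $\cU^\vee\oplus\cU^\vee$ cutting out $S$ in $D$, combined with the locally free resolution of $\cO_D$ on $G(3,V_9)$ and Bott's theorem, to compute the cohomology of $\cO_S$, and finally a Chern class computation (done in \cite{Macaulay2}) giving Plücker degree $36$. The one small point to add is that the same Bott check must also yield $h^0(\cO_S)=1$, i.e.\ the vanishing of $H^i(D,\wedge^i E^\vee)$ for $i>0$: connectedness of $S$ does not follow from Bertini for a merely globally generated bundle, and the paper verifies it as part of the same mechanical computation.
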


\proof 
In order to simplify the notations we let $D:=D_{Z_{10}}(v)$ and $G:=G(3,V_9).$
We need to check that $S$ is connected and that $h^1(\cO_S)=0$. This a standard computation: the Koszul complex of the section  of $\cG=\cU^\vee\oplus \cU^\vee$ that defines $S$ in $D$ resolves its structure sheaf as a $\cO_D$-module, and we will be able to conclude if we can check that 
$$h^i(D,\wedge^i\cG^\vee)=h^i(D,\wedge^{i-1}\cG^\vee)=0\quad \forall i>0.$$
Since the bundle $\cG$ on $D$ is the restriction of a bundle defined on the whole Grassmannian, we can check these vanishing by twisting the complex $\cJ^\bullet$ of vector bundles on the Grassmannian that resolves $\cO_D$, and verify that 
\begin{equation}\label{eq_cohom_K3}
    h^{i+j}(G,\wedge^i\cG^\vee\otimes \cJ^j)=h^{i+j}(D,\wedge^{i-1}\cG^\vee\otimes \cJ^j)=0
\quad \forall i>0, \forall j\ge 0.
\end{equation}
This is just an application of Bott's theorem on the Grassmannian. All bundles in play are homogeneous and even completely reducible: they come from certain semisimple representations of the parabolic subgroup $P$ defining $G(3,V_9)=\GL(9)/P$. The exterior powers and tensor products appearing in \eqref{eq_cohom_K3} can thus be computed with \cite{Lie} as exterior powers and tensor products of representations of the Levi factor of $P$, which is just $GL(3)\times GL(6)$. Then, it is possible to implement Bott's Theorem with a Python script to compute the cohomology of the corresponding bundles. In this way we were able to obtain all cohomology groups $H^{k}(G,\wedge^i\cG^\vee\otimes \cJ^j)$ for all $i\geq 0$, $j\geq 0$, $k\geq 0$. In fact they all vanish except $H^{0}(G,\wedge^0\cG^\vee\otimes \cJ^0)$ and $H^{18}(G,\wedge^6\cG^\vee\otimes \cJ^{10})$, which are both one dimensional. This implies that $h^0(\cO_S)=1$, $h^1(\cO_S)=0$ and $h^2(\cO_S)=1$, proving that $S$ is a $K3$ surface. The genus is $19$ because the degree of $\cO_S(1):=\cO_G(1)|_S$ is equal to $36$, as computed with \cite{Macaulay2}. \qed

\begin{remark} Similar computations lead to the following observations and questions:
\begin{enumerate}
    \item The Euler characteristic $\chi(S,\cU^\vee_{|S})=9$, suggesting that $\cU^\vee_{|S}$ has nine global sections,  while we would expect seven. What is the explanation? 
    \item Since the normal bundle of $S$ in $D=\SU_C(3,\cO_C(p)) $ is $N_{S/D}=
    \cU^\vee_{|S}\oplus \cU^\vee_{|S}$, this would imply that 
    $h^0(S,N_{S/D})=9+9=18$. Do the intersection of the moduli space
    with translates of $G(3,7)$ really form a family of K3 surfaces of dimension $18$?
    \item The Euler characteristic $\chi(S,\mathcal{E}nd_0(\cU^\vee_{|S}))=-2$. 
    If we could prove that $\cU^\vee_{|S}$ is stable, hence simple, we would deduce that his moduli space is a K3 surface. It is conjectured in \cite[Section 1.2]{km} that this moduli space should be a quartic K3 surface.
\end{enumerate}

\end{remark} 

\begin{remark} The construction of Hecke lines and planes in $\SU_C(3)$ shows that 
the odd moduli space  $\SU_C(3,\cO_C(p))$ has a different embedding inside 
$G(3,V_9)$ for each point $p\in C$. Equivalently, since this moduli space does not depend on $p$, it admits a rank three vector bundle $\cE_p$ for each $p$, generated by global sections. Considering a global section of $\cE_p\oplus\cE_q$, for $p$ and $q$ any two points in $C$, it follows from Proposition \ref{k3} that we will in general still get a  K3 surface. 
Mukai and Kanemitsu recently suggested that this construction 
should yield a locally complete family of K3 surfaces of genus $19$, a genus for which no such projective 
model was previously known \cite{km}. \end{remark}

\bibliography{moduli}

\bibliographystyle{amsalpha.v2}

\footnotesize

\bigskip 
Universit\'e Côte d’Azur, CNRS – Laboratoire J.-A. Dieudonné, Parc Valrose, F-06108 Nice Cedex 2, {\sc France}.

{\it Email address}: {\tt vladimiro.benedetti@univ-cotedazur.fr}

\smallskip 

Institut Montpelli\'erain Alexander Grothendieck, Université de Montpellier, 
 Place  Eugène Bataillon, 34095 Montpellier Cedex 5, {\sc France}.
 
{\it Email address}: {\tt michele.bolognesi@umontpellier.fr}

\smallskip 

Institut de Mathématiques de Bourgogne, Université de 
Bourgogne et Franche-Comté, 9 Avenue Alain Savary, 21078 Dijon Cedex, {\sc France}.

{\it Email address}: {\tt daniele.faenzi@u-bourgogne.fr}

\smallskip 
Institut de Math\'ematiques de Toulouse, 
Paul Sabatier University, 118 route de Narbonne, 31062 Toulouse
Cedex 9, {\sc France}.

{\it Email address}: {\tt manivel@math.cnrs.fr}
\end{document}